\newcommand{\E}{\mathbb{E}}
\DeclareMathOperator{\RER}{RER}
\DeclareMathOperator{\Null}{Null}
\theoremstyle{plain}%
\newtheorem{theorem}{Theorem}[section]
\newtheorem{lemma}[theorem]{Lemma}
\newtheorem{proposition}[theorem]{Proposition}
\theoremstyle{definition}
\newtheorem{definition}[theorem]{Definition}
\theoremstyle{remark}
\newtheorem{remark}[theorem]{Remark}
\title{Color representations of Ising models}
\author{Malin P. Forsstr\"om\footnote{malinpf@kth.se}\medskip \\ 
\emph{Department of Mathematics, KTH Royal Institute of Technology,} \\
\emph{100 44 Stockholm, Sweden.}}
\date{\today}
\begin{document}

\maketitle

\begin{abstract}
	In~\cite{st2017}, the authors introduced the so called \emph{general divide and color models}. One of the most well-known examples of such a model is the Ising model with external field \( h = 0 \), which has a color representation given by the random cluster model. In this paper, we give necessary and sufficient conditions for this color representation to be unique. We also show that if one considers the Ising model on a complete graph, then for many \( h > 0 \), there is no color representation. This shows in particular that any generalization of the random cluster model which provides color representations of Ising models with external fields cannot, in general, be a generalized divide and color models. Furthermore, we show that there can be at most finitely many \( \beta > 0 \)  at which the random cluster model can be continuously extended to a color representation for \( h \not = 0 \).
\end{abstract}

\section{Introduction}

A simple mechanism for constructing random variables with a positive dependency structure is the so called \emph{generalized divide and color model}. This model was first introduced in~\cite{st2017}, but similar constructions had already arisen in many different contexts.

\begin{definition}
Let  \( S  \) be a finite set. A \(\{0,1\}^S\)-valued random variable \( X \coloneqq (X_i)_{i \in S} \) is called a \emph{generalized divide and color model} if \( X \) can be generated as follows. 
\begin{enumerate}
    \item Choose a random partition \( \pi \) of $S$ according to some arbitrary {distribution $\mu$}.
    \item Let \( \pi_1, \ldots, \pi_m \) be the partition elements of \( \pi \). Independently for each \( i \in \{ 1,2, \ldots, m\}\), pick a "color" \( c_i \sim (1-p)\delta_0 + p \delta_1 \), and assign all the elements in \(  \pi_i\) the color \( c_i \) by letting \( X_j = c_i \) for all \( j \in \pi_i \).
     
\end{enumerate}{} 
The final \(\{0,1\}\)-valued 
process \( X \) is called the \emph{generalized divide and color} model associated to $\mu$ and $p$, and we  say that $\mu$ is a \emph{color representation} of $X$.
\end{definition}

As detailed in \cite{st2017}, many processes in probability theory
are generalized divide and color models; one of the most prominent examples being the Ising model with no external field. To define this model, let \( G = (V,E ) \) be a finite connected graph with vertex set \( V \) and edge set \( E \). We say that a random vector \( X = (\sigma_i)_{i \in V} \in \{ 0,1 \}^{V} \) is an \emph{Ising model} on \( G \) with interaction parameter \( \beta > 0 \) and external field \( h \in \mathbb{R} \) if \( X \) has probability density function \( \nu_{G, \beta,h }   \) proportional to 
\[
\exp \Bigl( \beta \sum_{\{ i,j\} \in E} \bigl(\mathbb{1}_{\sigma_i = \sigma_j}-\mathbb{1}_{\sigma_i \neq \sigma_j}\bigr) + h \sum_{i \in V} \bigl(\mathbb{1}_{\sigma_i=1} - \mathbb{1}_{\sigma_i = 0} \bigr) \Bigr).
\] 
The parameter \( \beta \) will be referred to as the \emph{interaction parameter} and the parameter \( h \) as the strength of the \emph{external field}.
We will write \( X^{G,\beta, h}  \) to denote a random variable with  \( X^{G,\beta,h} \sim \nu_{G,\beta,h} \).
It is well known that the Ising model has a color representation  when \( h = 0 \) given by the  random cluster model. To define the \emph{random cluster model} associated to the Ising model we first, for \( G = (V,E) \) and  \( w \in \{ 0,1 \}^E \), define  \( E_w \coloneqq \{ e \in E \colon w_e = 1 \}\) and note that this defines a partition \( \pi[w] \) of \( V \), where \( v ,v' \in V \) are in the same partition element of \( \pi[w] \) if and only if they are in the same connected component of the graph \( (V,E_w) \). Let \( \|\pi[w]\| \) be the number of partition elements of \( \pi[w] \) and let \( \mathcal{B}_V \) denote the set of partitions of \( V \). For \( r \in (0,1) \) and \( q \geq 0 \), the random cluster model \(   \mu_{G,r,q} \) is defined by
\begin{equation*}
	\mu_{G,r,q}(\pi')=\frac{1}{Z'_{G,r,q}} \sum_{w \in \{ 0,1\}^E \colon\atop \pi[w]=\pi'} \biggl[ \, \prod_{e \in E} r^{w(e)}(1-r)^{1-w(e)} \biggr] q^{\|\pi[w]\|}, \quad \pi' \in \mathcal{B}_V.
\end{equation*}
where   \( Z'_{G,r,q}  \) is a normalizing constant ensuring that this is a probability measure. It is well known  (see e.g.\ \cite{g2006}) that  if one sets \( r = 1 - \exp (-2 \beta) \), \( q = 2 \) and \( p = 1/2 \), then \( \mu_{G,r,q} \) is a color representation of \( X^{\beta,0}  \). To simplify notation, we will write \( \mu_{G,r} \coloneqq \mu_{G,r,2} \).

Since many properties of the Ising model with \( h=0 \) has been understood by using a color representation (given by the random cluster model, see e.g. \cite{accn1988,bc1996,g2006}), it is natural to ask if there is a color representation also when \( h > 0 \). Moreover, Theorems~1.2 and~1.4 in~\cite{fs2019}, which state that a random coloring of a set can have more than one color representation, motivates asking whether there are any color representation when \( h=0 \) which is different  from the random cluster model. 
The main objective of this paper is to provide partial answers to these questions by investigating how generalized divide and color models relate to some Ising models, both in the presence and absence of an external field.

In order to be able to present these results, we will need some additional notation. Let \( S \) be a finite set. For any measurable space \( (S , \sigma(S))\), we let  \( \mathcal{P}(S) \) denote the set of probability measures on \( (S, \sigma(S))\). When \( S = \{ 0,1 \}^T \) for some finite set \( T \), then we always consider the discrete \( \sigma \)-algebra, i.e.\ we let  \( \sigma(\{0,1 \}^T) \) be the set of all subsets of \( \{ 0,1\}^{T}\).
Recall the we let \( \mathcal{B}_S \) denote the set of partitions of \( S \). If \( \pi \in \mathcal{B}_S \) and \( T \subseteq S\), we let \( \pi|_T \) denote the partition of \( T \)  induced from \( \pi \) in the natural way. On \( \mathcal{B}_S \) we consider the \( \sigma \)-algebra \( \sigma(\mathcal{B}_S) \) generated by \( \{ \pi|_T \}_{T \subseteq S,\, \pi \in \mathcal{B}_S}\). In analogy with~\cite{st2017}, we let \( \RER_S\) denote the set of all probability measures on \( (\mathcal{B}_S, \sigma(\mathcal{B_S}))\) (RER stands for random equivalence relation). For a graph \( G\) with vertex set \( V \) and edge set \( E \), we let \( \RER_V^G \) denote the set of probability measures \( \mu \in \RER_V \) which has support only on partitions \( \pi \in \mathcal{B}_V\) whose partition elements induce connected subgraphs of \( G \).
For  each \( p \in (0,1) \), we now introduce the mapping \( \Phi_p \) from \( \RER_S \) to the set of probability measures on \( \{ 0,1 \}^S \) as follows. Let \( \mu \in \RER_S \). Pick \( \pi \) according to \( \mu \). Let \( \pi_1, \pi_2, \ldots, \pi_m \) be the partition elements of \( \pi \). Independently for each \( i \in \{ 1,2, \ldots, m \}\), pick \( c_i \sim (1-p) \delta_0 + p\delta_1 \) and let \( X_j = c_i \) for all \( j \in \pi_i \). This yields a random vector \( X = (X_i)_{i \in S} \) whose distribution will be denoted by \( \Phi_p(\mu) \). The random vector \( X \) will be referred to as a generalized divide and color model, and the measure \( \mu \) will be referred to as a color representation of \( X \) or \( \Phi_p(\mu) \). 
Note that \( \Phi_p \colon \mathcal{B}_S \to \mathcal{P}(\{ 0,1\}^S) \).
We will say that a probability measure   \(\nu \in  \mathcal{P}(\{ 0,1\}^S) \) has a color representation if there is a measure \( \mu \in \RER_S \) and \( p \in (0,1) \) such that \( \nu = \Phi_p(\mu) \). For \( \nu \in \mathcal{P}(S) \), we let \( \Phi^{-1}(\nu) \coloneqq \{ \mu \in RER_S \colon \Phi_p(\mu) = \nu \}\). Then \( \nu \) has a color representation if and only iff there is \( p \in (0,1) \) such that \( \Phi_p^{-1}(\nu) \) is non-empty.
By Theorems~1.2~and~1.4 in~\cite{fs2019}, \( \Phi_p^{-1}(\nu) \) can be non-empty if and only if the one-dimensional marginals of \( \nu \) are all equal to \( (1-p) \delta_0 + p \delta_1 \). From this it immediately follows that for any graph \( G \) and any \( \beta > 0 \), we have \( \Phi_p^{-1}(\nu_{G,\beta,0}) = \emptyset \) whenever \( p \neq 1-e^{-2\beta} \).

 Our first result is the following theorem, which states that for any finite graph \( G \) and any \( \beta > 0 \), \( X^{G,\beta,0} \) has at least two distinct color representations.

\begin{theorem}\label{theorem: color representations with no external field}
Let \( n \in \mathbb{N} \) and let  \( G \) be a connected graph with \( n \geq 3\) vertices. Further, let \( \beta > 0 \). Then there are at least two distinct probability measures \( \mu,\mu' \in RER_{V(G)} \) such that \( \Phi_{1/2}(\mu) = \Phi_{1/2}(\mu') = \nu_{G,\beta,0}\).
Furthermore, if \( G \) is not a tree, then there are at least two distinct probability measures \( \mu,\mu' \in RER_{V(G)}^G \) such that \( \Phi_{1/2}(\mu) = \Phi_{1/2}(\mu') = \nu_{G,\beta,0}\).

\end{theorem}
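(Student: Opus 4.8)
The plan is to linearize the problem and perturb the known representation. I extend \( \Phi_{1/2} \) to a linear map on signed measures on \( \mathcal{B}_V \): for \( \eta = \sum_\pi a_\pi \delta_\pi \) one has
\[
\Phi_{1/2}(\eta)(\sigma) = \sum_{\pi} a_\pi\, 2^{-\|\pi\|}\, \mathbb{1}\bigl[\sigma \text{ is constant on every block of } \pi\bigr],
\]
from which two facts are immediate. First, summing over \( \sigma \) shows \( \Phi_{1/2} \) preserves total mass, so \( \Phi_{1/2}(\eta) = 0 \) forces \( \sum_\pi a_\pi = 0 \). Second, since \( \sigma \) and \( \mathbb{1}-\sigma \) are constant on the same blocks, the image of \( \Phi_{1/2} \) lies in the subspace \( W \) of flip-invariant functions on \( \{0,1\}^V \); as the flip is a fixed-point-free involution, \( \dim W = 2^{n-1} \). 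The strategy is then to produce a nonzero \( \eta \) with \( \Phi_{1/2}(\eta)=0 \) and set \( \mu' \coloneqq \mu_{G,r} + t\eta \). Because \( \eta \) has total mass \( 0 \) and lies in the kernel, \( \mu' \) has total mass \( 1 \) and \( \Phi_{1/2}(\mu') = \Phi_{1/2}(\mu_{G,r}) = \nu_{G,\beta,0} \); it then remains to choose \( t \neq 0 \) small so that \( \mu' \geq 0 \).

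For the positivity I use that \( \mu_{G,r}(\pi) > 0 \) exactly when \( \pi \) has connected blocks, and \( \mu_{G,r}(\pi) = 0 \) otherwise. Hence if \( \eta \) is supported on connected partitions, any sufficiently small \( t \) of either sign keeps \( \mu' \geq 0 \) and keeps \( \mu' \in \RER_V^G \); and if \( \eta \) takes values of a single sign \( s \in \{+,-\} \) on the non-connected partitions where \( \mu_{G,r} \) vanishes, then a small \( t \neq 0 \) with \( \operatorname{sgn}(t) = s \) keeps \( \mu' \geq 0 \) and gives \( \mu' \in \RER_V \). In both cases \( \mu' \neq \mu_{G,r} \) since \( t\eta \neq 0 \).

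For the second (non-tree) statement I obtain \( \eta \) supported on connected partitions by a dimension count. Restricting \( \Phi_{1/2} \) to \( \operatorname{Span}\{\delta_\pi : \pi \text{ has connected blocks}\} \), a space of dimension \( c(G) \coloneqq \#\{\text{connected partitions of } G\} \), the image still lies in \( W \), so a nonzero kernel element exists as soon as \( c(G) > 2^{n-1} \). Fixing a spanning tree \( T \), the \( 2^{n-1} \) edge-subsets of \( T \) yield \( 2^{n-1} \) distinct connected partitions, so \( c(G) \geq 2^{n-1} \) always, with equality when \( G \) is a tree. If \( G \) is not a tree, I pick a non-tree edge \( uv \); the partition with block \( \{u,v\} \) and all other vertices singletons is connected in \( G \) but is induced by no subset of tree edges, since \( u,v \) are non-adjacent in \( T \). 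Hence \( c(G) > 2^{n-1} \) strictly, and the perturbation principle produces two distinct representations in \( \RER_V^G \). This combinatorial inequality is exactly where the tree/non-tree dichotomy enters, and I expect it to be the conceptual crux: for a tree the connected class gives no room to move.

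For the first statement, which must also cover trees, I exhibit an explicit kernel element. On three vertices a direct computation (checking each of the five partitions against each \( \sigma \)) shows that
\[
\eta_3 \coloneqq \delta_{\{abc\}} + 2\,\delta_{\{a\}\{b\}\{c\}} - \delta_{\{ab\}\{c\}} - \delta_{\{ac\}\{b\}} - \delta_{\{bc\}\{a\}}
\]
satisfies \( \Phi_{1/2}(\eta_3) = 0 \). Choosing \( a,b,c \) to form a path \( a - b - c \) in \( G \) (which exists in any connected graph on \( n \geq 3 \) vertices) and tensoring \( \eta_3 \) with the all-singletons partition of the remaining vertices produces a nonzero \( \eta \) with \( \Phi_{1/2}(\eta) = 0 \), by the multiplicativity of \( \Phi_{1/2} \) across blocks that do not interact. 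Every partition in the support of \( \eta \) has connected blocks except, when \( ac \notin E \), the one containing the block \( \{a,c\} \), on which \( \eta \) equals \( -1 \); so the single-sign condition holds with \( s = - \), and a small \( t < 0 \) yields a second representation \( \mu' \in \RER_V \). The main obstacle throughout is controlling the sign of the perturbation so that \( \mu' \) remains a genuine probability measure in the required class, which is precisely why the argument splits according to whether \( G \) is a tree.
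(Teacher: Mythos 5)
Your proof is correct, and while it shares the paper's overall strategy --- perturb the random cluster model \( \mu_{G,r} \) by a signed measure in the kernel of the linearized map \( \Phi_{1/2}^* \), using that \( \mu_{G,r} \) is strictly positive on all \( G \)-connected partitions --- the way you produce the kernel element is genuinely different. The paper row-reduces \( A_{n,1/2} \) to the matrix \( A''(S,\pi)=\mathbb{1}(\pi|_S \text{ has only even blocks}) \) and proves, via a M\"obius-type computation, that the \( 2^{n-1}\times 2^{n-1} \) submatrix indexed by the tree-connected partitions is invertible; this identifies those partitions as dependent variables and every remaining column as a free variable that can be perturbed. You instead need only the easy upper bound \( \operatorname{rank}\Phi_{1/2}^* \le 2^{n-1} \) (flip-invariance of the image), and then get the perturbation direction either by a soft dimension count (the non-tree case: \( c(G) \ge 2^{n-1}+1 \) because the \( 2^{n-1} \) edge-subsets of a spanning tree give distinct connected partitions and a chord adds at least one more) or by the explicit local kernel vector \( \eta_3 \) on a path \( a\mbox{--}b\mbox{--}c \), tensored with singletons (the general case, with the sign of \( t \) dictated by the single possibly disconnected partition \( \pi[\{a,c\}] \) carrying coefficient \( -1 \)). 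Your route is more elementary and self-contained --- it avoids the invertibility of \( A_T \) and the rank formula from the cited reference entirely --- at the cost of proving less: the paper's argument yields a full parametrization of the solution set by the free variables, which is stronger structural information, whereas yours produces exactly the two (or a one-parameter family of) representations needed. Both arguments handle the sign constraint at partitions where \( \mu_{G,r} \) vanishes correctly; your explicit single-sign analysis of \( \eta_3 \) plays the role of the paper's ``increase the free variable'' step.
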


We remark that if a graph \( G \) has only one or two vertices and \( \beta > 0 \), then it is known from Theorem 2.1 in~\cite{st2017} (see also Theorems 1.2 and 1.4 in~\cite{fs2019}) that \( \Phi_{1/2}^{-1}(\nu_{G, \beta ,0}) = \{ \mu_{G,1-\exp(-2\beta)} \} \). In other words, when \( h = 0 \), the Ising model   \(   X^{G, \beta,h}\) has a unique color representation, given by the random cluster model \( \mu_{G,1-\exp(-2\beta)} \).
To get an intuition for what should happen when \( h > 0 \), we first look at a few toy examples. One of the simplest such examples is the Ising model on a complete graph with three vertices. The following result was also included as Remark~7.8(iii) in~\cite{st2017} and in~\cite{fs2019} as Corollary~1.8.
\begin{proposition}\label{theorem: Ising on K3}
Let \( G \) be the complete graph on three vertices. Let \( \beta > 0 \) be fixed. For each  \( h > 0 \), let \( p_h \in (0,1 ) \) be such that the marginal distributions of \( X^{G, \beta, h} \) are given by \( (1-p_h)\delta_0 + p_h\delta_1 \). Then the following holds.
\begin{enumerate}[(i)]
\item  For each \( h > 0 \), we have \( \bigl| \Phi_{p_h}^{-1}(\nu_{G, \beta,h}) \bigr| = 1\), i.e.\ \( X^{G, \beta , h}  \) has a unique color representation for any \(h > 0 \).
\item  For each \( h>0 \), let \( \mu_h \) be defined by \( \{ \mu_h \} =  \Phi_{p(\beta,h)}^{-1}(\nu_{G, \beta,h}) \). Then \( \mu_0(\pi) \coloneqq \lim_{h \to 0} \mu_h(\pi) \) exists for all \( \pi \in \mathcal{B}_{V(G)} \), and \( \Phi_{1/2}(\mu_0) = \nu_{G,\beta,0} \). However,  \(  \mu_0 \neq \mu_{G,1-e^{-2\beta}}\). 
\end{enumerate}
\end{proposition}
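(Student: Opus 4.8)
The plan is to exploit the fact that \( V(G) = \{1,2,3\} \) has only five partitions, so that \( \RER_{V(G)} \) is effectively five-dimensional and the condition \( \Phi_{p_h}(\mu) = \nu_{G,\beta,h} \) becomes an explicit finite linear system. Write \( a = \mu(\{\{1\},\{2\},\{3\}\}) \), let \( b_i \) be the mass \( \mu \) assigns to the partition whose unique singleton is \( \{i\} \) (for \( i=1,2,3 \)), and let \( c = \mu(\{\{1,2,3\}\}) \). Putting \( q = 1-p_h \) and unravelling the definition of \( \Phi_{p_h} \), I would compute \( \Phi_{p_h}(\mu) \) on each of the eight configurations once and for all; grouping them by the four symmetry classes \( 000 \), \( 111 \), the three configurations with a single \( 1 \), and the three with a single \( 0 \), the nonzero contributions are, for instance, \( \Phi_{p_h}(\mu)(100) = a\,p_h q^2 + b_1 p_h q \) and \( \Phi_{p_h}(\mu)(011) = a\,q p_h^2 + b_1 p_h q \), together with \( \Phi_{p_h}(\mu)(000) = a q^3 + (b_1+b_2+b_3) q^2 + c q \) and its \( 0 \leftrightarrow 1 \) mirror for \( 111 \).

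For part (i), first observe that the three equations coming from \( 100, 010, 001 \) read \( a p_h q^2 + b_i p_h q = \nu_{G,\beta,h}(100) \); since \( p_h q > 0 \), subtracting them in pairs forces \( b_1 = b_2 = b_3 =: b \), so every solution is automatically \( S_3 \)-symmetric. Next, subtracting the equation for \( 011 \) from that for \( 100 \) gives \( a\,p_h q(q - p_h) = \nu(100) - \nu(011) \); because \( h > 0 \) gives \( p_h \neq 1/2 \), i.e.\ \( q - p_h \neq 0 \), this determines \( a \) uniquely, after which \( b \) and then \( c \) are forced, which is uniqueness. For existence I would check that the resulting \( (a,b,c) \) is nonnegative and satisfies \( a + 3b + c = 1 \); the normalization is equivalent to the remaining equation (the one for \( 111 \)), since \( \Phi_{p_h} \) preserves total mass and \( p_h^3 + q^3 + 3 p_h q = 1 \), so only nonnegativity and this single consistency identity remain — both direct computations with the Ising weights \( e^{3\beta \pm 3h}, e^{-\beta \mp h} \) and the defining relation for \( p_h \). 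I would also note that at \( h=0 \) the step "\( q - p_h \neq 0 \)" fails, the equations for \( 100 \) and \( 011 \) coincide, and \( a \) becomes a free parameter ranging over an interval; this is exactly the source of the non-uniqueness in Theorem~\ref{theorem: color representations with no external field}.

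For part (ii) I would carry the explicit solution \( (a(h),b(h),c(h)) \) and let \( h \to 0 \). Only \( a(h) = \bigl(\nu(100)-\nu(011)\bigr)\big/\bigl(p_h q (q-p_h)\bigr) \) is of the indeterminate form \( 0/0 \); expanding the hyperbolic factors \( \nu(100)-\nu(011) \propto -\sinh h \) and \( q - p_h \propto -\bigl(e^{3\beta}\sinh 3h + e^{-\beta}\sinh h\bigr) \) to first order in \( h \) yields a finite limit, and I compute \( a(0) = 4/(3e^{4\beta}+1) \); the limits of \( b \) and \( c \) follow similarly, so \( \mu_0 \) exists and, as a limit of probability measures on the finite set of partitions, is itself a probability measure. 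Since \( (h,\mu) \mapsto \Phi_{p_h}(\mu) \) is continuous and \( \Phi_{p_h}(\mu_h) = \nu_{G,\beta,h} \) for every \( h \), letting \( h \to 0 \) gives \( \Phi_{1/2}(\mu_0) = \nu_{G,\beta,0} \). Finally I would compute the random-cluster masses directly: with \( t = e^{-2\beta} \) the normalizing sum collapses to \( 2(3t^2+1) \), whence \( \mu_{G,1-e^{-2\beta}}(\{\{1\},\{2\},\{3\}\}) = 4t^3/(3t^2+1) = 4e^{-2\beta}/(e^{4\beta}+3) \). Comparing with \( a(0) \), equality would force \( e^{4\beta}+3 = 3e^{2\beta}+e^{-2\beta} \), which (by a Taylor argument at the triple zero \( \beta=0 \)) holds only at \( \beta=0 \); hence for every \( \beta > 0 \) the two measures differ on the finest partition, so \( \mu_0 \neq \mu_{G,1-e^{-2\beta}} \).

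The bookkeeping of the eight coloring probabilities is routine, and uniqueness is immediate once \( p_h \neq 1/2 \) is invoked; the one genuinely necessary verification in part (i) is the existence half, namely that the uniquely determined \( (a,b,c) \) is nonnegative for every \( h > 0 \). I expect the main subtlety to be the indeterminate \( h \to 0 \) limit of \( a(h) \) in part (ii): it must be resolved carefully enough to produce the closed form \( a(0) = 4/(3e^{4\beta}+1) \), since the entire conclusion \( \mu_0 \neq \mu_{G,1-e^{-2\beta}} \) rests on comparing this exact value against the random-cluster mass.
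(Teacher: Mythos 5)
Your proposal is correct and follows essentially the same route as the paper: both reduce the problem to the explicit five\mbox{-}dimensional linear system \( A_{3,p_h}\mu=\nu_{K_3,\beta,h} \), use \( p_h\neq 1/2 \) (your step \( a\,p_hq(q-p_h)=\nu(100)-\nu(011) \)) to pin down the solution uniquely, and then compare the \( h\to 0 \) limit \( 4/(3e^{4\beta}+1) \) of the all-singletons mass with the random-cluster value \( 4e^{-2\beta}/(e^{4\beta}+3) \) --- your \( \sinh \)-expansion, the random-cluster normalization \( 2(3t^2+1) \), and the triple-zero argument all check out. The one piece you defer, nonnegativity of the uniquely determined \( (a(h),b(h),c(h)) \) for every \( h>0 \) (the case of \( c(h) \) is not immediate), is exactly what the paper settles by exhibiting the closed-form expressions for \( \mu(\pi) \) and observing \( e^{4\beta+4h}\geq e^{4\beta+2h} \), so your plan goes through but that computation is the substantive remaining work.
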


Interestingly, if we increase the number of vertices in the underlying graph by one, the picture immediately becomes more complicated.

\begin{proposition}\label{theorem: Ising on K4}
Let \( G \) be the complete graph on four vertices. Let \( \beta > 0 \) be fixed. For each  \( h > 0 \), let \( p_h \in (0,1 ) \) be such that the marginal distributions of \( X^{G, \beta, h} \) are given by \( (1-p_h)\delta_0 + p_h\delta_1 \). 
To simplify notation, set  \( x  \coloneqq e^{2\beta} \) and \( y_h  \coloneqq e^{2h} \). Then \( X^{G, \beta, h} \) has a color representation if and only if 
\begin{equation}\label{eq: K4}
\begin{split}
x^{5} &+ 3 x^2 y_h + 4 x y_h^2 - 2 x^3 y_h^2 + x^{5} y_h^2 - 3 y_h^3 + 
  7 x^2 y_h^3 - x^4 y_h^3 - x^{6} y_h^3 
  \\ &+ 4 x y_h^4 - 2 x^3 y_h^4 + x^{5} y_h^4 + 
  3 x^2 y_h^5 + x^{5} y_h^6 \geq 0.
  \end{split}
\end{equation}
In particular, if we let \( \beta_0 \coloneqq \log(2 + \sqrt{3})/2 \), then the following holds.
\begin{enumerate}[(i)]
\item If \(\beta< \beta_0\), then \( X^{G, \beta,h} \) has a color representation for all sufficiently small \( h>0 \),
\item If \(\beta> \beta_0 \), then \( X^{G, \beta,h} \) has no color representation for any sufficiently small \( h>0 \).
\end{enumerate}
Moreover, there is no decreasing sequence \( h_1, h_2, \ldots \) with \( \lim_{n \to \infty} h_n = 0 \) and \( \mu_{h_n} \in \Phi_{p_{h_n}}^{-1}(\nu_{G, \beta,h}) \) such that \( \lim_{n \to \infty} \mu_{h_n}(\pi) = \mu_{G, 1-e^{-2\beta}}(\pi) \) for all \( \pi \in \mathcal{B}_{V(G)} \). In other words, the random cluster model does not arise as a subsequential limit of color representations of \( X^{G, \beta,h} \) as \( h \to 0 \), for any \( \beta > 0 \).
\end{proposition}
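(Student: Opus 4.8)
The plan is to reduce the existence of a color representation to a finite-dimensional linear feasibility problem, solve it explicitly, and read off all three conclusions. \emph{Setup and reduction to symmetric measures.} Since $G=K_4$ is complete, every partition of $V(G)$ has connected blocks, so $\RER^G_{V(G)}=\RER_{V(G)}$. The measure $\nu_{G,\beta,h}$ is invariant under the $S_4$-action on $V(G)$, and $\Phi_p$ commutes with this action; averaging any color representation over $S_4$ therefore yields an $S_4$-invariant one, so $\nu_{G,\beta,h}$ has a color representation if and only if it has an $S_4$-invariant one. I would parametrize $S_4$-invariant measures in $\RER_{V(G)}$ by the five weights $a_0,\dots,a_4\ge 0$ they put on the five partition types of a four-element set (all singletons, one pair, two pairs, one triple, one block), and $S_4$-invariant measures on $\{0,1\}^{V(G)}$ by the probabilities $f_0,\dots,f_4$ of a fixed configuration of each Hamming weight. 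With $x=e^{2\beta}$ and $y_h=e^{2h}$ the Ising weights are $f_k=g_k/Z$ where $g_0=x^3y_h^{-2}$, $g_1=y_h^{-1}$, $g_2=x^{-1}$, $g_3=y_h$, $g_4=x^3y_h^2$, and computing $\Phi_p(\bar\delta_\tau)$ for each type $\tau$ gives an explicit $5\times 5$ matrix $M(p)$ with $(f_k)=M(p)(a_\tau)$. By Theorems~1.2 and~1.4 in~\cite{fs2019} one must take $p=p_h$, which is a rational function of $x,y_h$.

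\emph{Deriving \eqref{eq: K4}.} The matrix $M(p_h)$ has rank $4$; the single dependency reflects that $\Phi_p$ preserves total mass and always produces marginal $p_h$, so for the correct $p_h$ the system is consistent with a one-parameter solution set $\{a^\ast+tv\}$, $\sum_\tau v_\tau=0$. I would eliminate four of the $a_\tau$ in favour of one free parameter $t$, making each $a_\tau(t)$ affine in $t$; the constraints $a_\tau(t)\ge 0$ then cut out an interval of admissible $t$, and a color representation exists if and only if this interval is non-empty. Clearing the positive denominators in ``largest lower bound $\le$ smallest upper bound'' is the computational heart of the argument, and I expect that, using $x>1$ and $0<p_h<1$, all but one of the pairwise comparisons is inactive, the surviving one being exactly \eqref{eq: K4}. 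Verifying that the remaining constraints impose no further restriction is the main obstacle: it is routine but lengthy polynomial bookkeeping.

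\emph{The threshold $\beta_0$.} Setting $y_h=1$ in the left-hand side of \eqref{eq: K4} and factoring gives $-(x^2-4x+1)(x^4+4x+3)$. For $x>1$ the factor $x^4+4x+3$ is positive, so the sign is governed by $x^2-4x+1$, which vanishes at $x=2+\sqrt{3}=e^{2\beta_0}$, is negative for $1<x<2+\sqrt{3}$, and positive for $x>2+\sqrt{3}$. Hence the left-hand side of \eqref{eq: K4} is strictly positive at $h=0$ when $\beta<\beta_0$ and strictly negative when $\beta>\beta_0$; as it is continuous in $y_h$, \eqref{eq: K4} holds for all small $h>0$ in the first case and fails for all small $h>0$ in the second, giving (i) and (ii).

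\emph{No subsequential limit at the random cluster model.} The key point is that the difference of the two equations matching $f_1$ and $f_3$ factors as $p_h(1-p_h)(1-2p_h)\bigl(a_0+\tfrac12 a_1\bigr)=f_1-f_3$, so the functional $\psi(\mu):=a_0+\tfrac12 a_1$ is constant along the solution line: every color representation $\mu$ of $\nu_{G,\beta,h}$ with $h\ne 0$ satisfies $\psi(\mu)=(f_1-f_3)/\bigl(p_h(1-p_h)(1-2p_h)\bigr)=:\Psi(h)$. A first-order expansion as $h\to 0$ gives $\lim_{h\to 0}\Psi(h)=2/(1+x^3)$. Computing the type-weights of $\mu_{G,1-e^{-2\beta}}$ on $K_4$ (summing $r^{|E_w|}(1-r)^{6-|E_w|}2^{\|\pi[w]\|}$ over edge-sets $w$, with $r=(x-1)/x$) gives $\psi(\mu_{G,1-e^{-2\beta}})=4(3x-1)/\bigl(x^2(x^4+4x+3)\bigr)$; the difference of these two rational functions has numerator proportional to $(x-1)^3(x^3+3x^2-2)$, which has no zero in $(1,\infty)$, so $\psi(\mu_{G,1-e^{-2\beta}})\ne\lim_{h\to 0}\Psi(h)$ for every $\beta>0$. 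If some decreasing $h_n\to 0$ admitted color representations $\mu_{h_n}$ with $\mu_{h_n}(\pi)\to\mu_{G,1-e^{-2\beta}}(\pi)$ for all $\pi$, then symmetrizing (which preserves the limit, $\mu_{G,1-e^{-2\beta}}$ being $S_4$-invariant) would force $\Psi(h_n)=\psi(\mu_{h_n})\to\psi(\mu_{G,1-e^{-2\beta}})$, contradicting $\Psi(h_n)\to 2/(1+x^3)$. This establishes the final assertion; it is vacuous when $\beta>\beta_0$ by (ii).
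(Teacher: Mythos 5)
Your proposal is correct and, for the main equivalence~\eqref{eq: K4}, follows essentially the same route as the paper: reduce to \( S_4 \)-invariant measures, use the one-dimensional affine solution set of the \( 5\times 5 \) symmetrized system (Theorem~1.5(i) of~\cite{fs2019}), and test non-negativity along that line; like the paper, you defer the actual elimination and sign analysis to ``lengthy polynomial bookkeeping'' (the paper defers it to Mathematica), so neither account is more complete on that step. Where you genuinely add something is in the later parts, which the paper's proof does not address explicitly. Your factorization of the left-hand side of~\eqref{eq: K4} at \( y_h=1 \) as \( -(x^2-4x+1)(x^4+4x+3) \) is correct and cleanly yields (i) and (ii) with \( x=2+\sqrt 3 \) as the threshold. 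For the final claim, your observation that \( \nu(1^{\{1\}}0^{[4]\setminus\{1\}})-\nu(1^{[3]}0^{\{4\}})=p_h(1-p_h)(1-2p_h)\bigl(a_0+\tfrac12 a_1\bigr) \) (with \( a_0 \) the mass on the all-singleton partition and \( a_1 \) the total mass on the one-pair type) pins down a functional that every \( S_4 \)-invariant color representation must share; I checked that its \( h\to 0 \) limit is \( 2/(1+x^3) \), that the random cluster value is \( 4(3x-1)/(x^2(x^4+4x+3)) \), and that the difference has numerator \( 2(x-1)^3(x^3+3x^2-2) \), which indeed has no zero on \( (1,\infty) \). This gives a self-contained proof of the non-convergence to the random cluster model that is arguably more transparent than the paper's implicit reliance on the explicit Mathematica solutions, and it foreshadows the mechanism used later in Lemma~\ref{lemma: necessary condition}.
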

Interestingly, this already shows that there are graphs \( G \) and parameters \( \beta,h > 0 \) such that the corresponding Ising model \( X^{G, \beta,h} \)  does not have any color representations.

The proof of Proposition~\ref{theorem: Ising on K4} can in principle be extended directly to complete graphs with more than four vertices, but it quickly becomes computationally  heavy and the analogues of~\eqref{eq: K4} become quite involved (see Remark~\ref{remark: K5} for the analogue expression for a complete graph on five vertices). In Figure~\ref{fig: K45}, we draw the set of all pairs \( (\beta,h )\in \mathbb{R}_+^2\) which satisfies the inequality in~\eqref{eq: K4} together with the corresponding set for a complete graph on five vertices.

\begin{figure}[ht]
\centering
\includegraphics[width=0.6\linewidth]{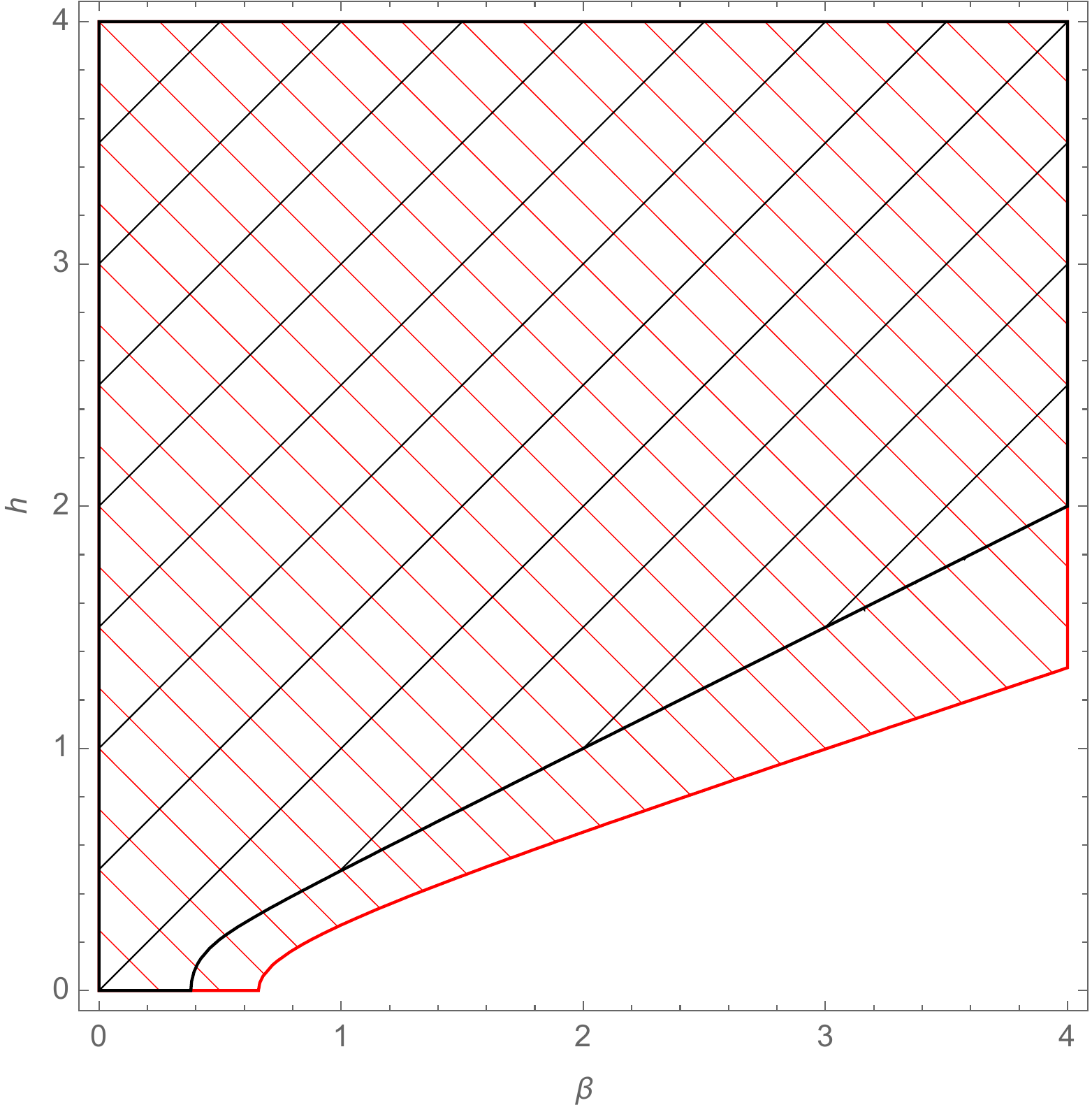}
\caption{The figure above shows the sets of all \( (\beta, h) \in \mathbb{R}_+^2 \) which are such that the Ising model \( X^{G, \beta,h}\), for \( G \) being a complete graph on four vertices (red) and five vertices (black) respectively, has at least one color representation (see Proposition~\ref{theorem: Ising on K4} and Remark~\ref{remark: K5}). }\label{fig: K45}
\end{figure}
Figure~\ref{fig: K45}, together with the previous two propositions, suggests that the following conjectures should  hold for all complete graphs \( G \) on at least four vertices.\begin{enumerate}[I.]
	\item If \( \beta>0 \) is sufficiently small, then \( X^{G, \beta,h } \) has a color representation for all \( h \in \mathbb{R}\).
	\item For each \( \beta > 0 \), \( X^{G, \beta, h } \) has a color representations for all sufficiently large \(h \in \mathbb{R}\).
	\item If \( \beta \) is sufficiently large, then \( X^{G, \beta,h }\) has no color representation for any sufficiently small \( h >0 \).
	\item If \( \beta,h > 0 \) and \( X^{G, \beta, h} \) has a color representation, then so does \( X^{G, \beta',h} \) and \( X^{G, \beta,h' } \) for all \( h' > h \) and \( \beta' \in (0, \beta) \).
	\item The random cluster model corresponding to \( X^{G, \beta,0}\)  does not arise as a subsequential limit of color representations of \( X^{G, \beta,h} \) as \( h \to 0 \).
\end{enumerate}

Our next result concerns the last of these conjectures.

\begin{theorem}\label{theorem: rcm limit}
Let \( n \in  \mathbb{N} \) and let \( G \) be a connected and vertex-transitive graph with \( n \) vertices. For each \( \beta \geq 0 \) and \( h > 0 \),   let \( p_{\beta,h} \in (0,1 ) \) be such that the marginal distributions of \( X^{G, \beta, h} \) are given by \( (1-p_{\beta,h})\delta_0 + p_{\beta,h}\delta_1 \).  Then there is a set \( B \subseteq \mathbb{R}_+ \) with \( |B| \leq n(n-1)\) such that for all \( \beta \in \mathbb{R}_+ \backslash B \), all sequences \( h_1 > h_2 > \ldots \) with \( \lim_{m \to \infty} h_m = 0 \) and all sequences \( \mu_m \in \Phi_{p_{\beta,h_m}}^{-1}(\nu_{G,\beta,h}) \), there is a partition \( \pi \in \mathcal{B}_{V} \) such that \( \lim_{m \to \infty} \mu_m(\pi) \not \neq \mu_{G, 1-e^{-2\beta}} (\pi)\).
In other words, the random cluster model on \( G \)  can  arise as a subsequential limit when \( h \to 0 \) of color representations of \( X^{G, \beta,h} \)  for at most \( n(n-1) \) different values of \( \beta \).
\end{theorem}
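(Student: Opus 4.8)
Throughout fix $\beta$ and write $\mu_0:=\mu_{G,1-e^{-2\beta}}$ for the random cluster model and $p_h:=p_{\beta,h}$. Since $\nu_{G,\beta,h}(\sigma)$ and $p_h$ are real-analytic in $h$ and $\Phi_p(\mu)(\sigma)$ is polynomial in $p$, the quantities
\[
\dot\nu:=\tfrac{d}{dh}\nu_{G,\beta,h}\big|_{h=0},\qquad
\alpha:=\tfrac{d}{dh}p_h\big|_{h=0},\qquad
D:=\tfrac{\partial}{\partial p}\Phi_p(\mu_0)\big|_{p=1/2}
\]
all exist. For $A\subseteq V$ set $\chi_A(\sigma):=\prod_{i\in A}(1-2\sigma_i)$ and $\ell_A(\rho):=\sum_\sigma\chi_A(\sigma)\rho(\sigma)$. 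The plan is to extract a first-order (in $h$) necessary condition for $\mu_0$ to be a subsequential limit. Suppose $h_m\downarrow 0$ and $\mu_m\in\Phi_{p_{h_m}}^{-1}(\nu_{G,\beta,h_m})$ with $\mu_m\to\mu_0$ pointwise. For $|A|$ odd one has $\ell_A\bigl(\Phi_{1/2}(\mu)\bigr)=\sum_\pi\mu(\pi)\prod_{\text{blocks }B}\mathbb{1}[\,|A\cap B|\text{ even}\,]=0$ for every $\mu$, since some block meets $A$ oddly; in particular $\ell_A(\nu_{G,\beta,0})=\ell_A(\Phi_{1/2}(\mu_0))=0$. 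Applying $\ell_A$ to $\nu_{G,\beta,h_m}=\Phi_{p_{h_m}}(\mu_m)$, the $\Phi_{1/2}$-part drops out, and a Taylor expansion of $\Phi_p$ about $p=\tfrac12$ (uniform over probability measures) gives
\[
\ell_A(\nu_{G,\beta,h_m})=(p_{h_m}-\tfrac12)\,\ell_A\bigl(\partial_p\Phi_{1/2}(\mu_m)\bigr)+O\bigl((p_{h_m}-\tfrac12)^2\bigr).
\]
Dividing by $h_m$ and letting $m\to\infty$, using $\mu_m\to\mu_0$ and $\ell_A(\nu_{G,\beta,0})=0$, yields $\ell_A(\dot\nu)=\alpha\,\ell_A(D)$ for every odd $A$.

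Next observe the parity structure. Under $\sigma\mapsto\mathbf 1-\sigma$ one has $\nu_{G,\beta,h}\mapsto\nu_{G,\beta,-h}$ and $\Phi_p\mapsto\Phi_{1-p}$, so $\dot\nu$ and $D$ are both \emph{odd} under this involution, whereas every element of the image of $\Phi_{1/2}$ is \emph{even}. Since $\chi_A\circ(\mathbf 1-\,\cdot\,)=(-1)^{|A|}\chi_A$, oddness of $\dot\nu$ and $D$ forces $\ell_A(\dot\nu)=\ell_A(D)=0$ for every \emph{even} $A$. Combining with the previous paragraph, $\ell_A(\dot\nu-\alpha D)=0$ for \emph{all} $A$, i.e.
\[
\dot\nu=\alpha\,D\qquad\text{as vectors in }\R^{\{0,1\}^V}.
\]
Hence the set of $\beta$ at which $\mu_0$ is a subsequential limit is contained in $\{\beta:\dot\nu(\beta)=\alpha(\beta)D(\beta)\}$.

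It remains to show this vector identity fails for all but finitely many $\beta$, with the stated count. Fix three distinct vertices and take $A=\{a,b,c\}$. Using the standard identity $\E_{\nu_{G,\beta,0}}[\chi_B]=\mu_0(\{\pi:B\text{ meets every block evenly}\})$ together with $\alpha=C/2$, where $C:=\tfrac1n\E_{\mu_0}\bigl[\sum_{\text{blocks }B}|B|^2\bigr]$ is the expected size of the block containing a fixed vertex, a direct computation turns the $A$-coordinate of $\dot\nu-\alpha D$ into
\[
g_A(\beta)=C\,\mu_0(\mathcal O_A)-\E_{\mu_0}\bigl[\,|K|\,\mathbb{1}_{\mathcal O_A}\,\bigr],
\]
where $\mathcal O_A$ is the event that at least two of $a,b,c$ lie in a common block and $K$ is, on that event, the unique block containing an odd number of $a,b,c$. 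Writing each random cluster probability with the unnormalised weight $(e^{2\beta}-1)^{|E_w|}2^{\|\pi[w]\|}$ and clearing the common denominator $Z=\sum_w(e^{2\beta}-1)^{|E_w|}2^{\|\pi[w]\|}>0$, the equation $g_A(\beta)=0$ becomes $P_A(e^{2\beta})=0$ for a polynomial $P_A$. As each weight has degree $|E_w|\le|E|$ in $x:=e^{2\beta}$ and $g_A$ is a ratio whose numerator is a product of two such sums, $\deg P_A\le 2|E|=nd\le n(n-1)$, with $d$ the common degree. Thus, once we know $g_A\not\equiv 0$, the set $B:=\{\beta\in\R_+:g_A(\beta)=0\}$ has at most $n(n-1)$ elements, and for $\beta\notin B$ the random cluster model cannot be a subsequential limit.

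The main obstacle is therefore to exhibit one odd $A$ with $g_A\not\equiv 0$. I would do this by the high-temperature expansion, writing $g_A$ as a power series in $\epsilon:=e^{2\beta}-1$ and isolating its first nonzero coefficient. The difficulty is that the low-order coefficients vanish: both $C\mu_0(\mathcal O_A)$ and $\E_{\mu_0}[\,|K|\,\mathbb{1}_{\mathcal O_A}\,]$ equal $\tfrac12(\#\{\text{edges inside }A\})\,\epsilon+O(\epsilon^2)$, and (as one checks for complete graphs) even the $\epsilon^2$-coefficients agree, so one must push the expansion to the first genuinely graph-dependent order; this is unavoidable, reflecting that $\mu_0$ is an exact color representation at $h=0$, so any obstruction appears only beyond leading order. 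For $A=\{a,b,c\}$ spanning a triangle the expansion gives $g_A=O(\epsilon^3)$ with a nonzero $\epsilon^3$-coefficient; for instance, for $G=K_3$ and $A=V$ one computes
\[
g_A(\beta)=\frac{4\,(e^{2\beta}-1)^3}{e^{2\beta}\,(e^{4\beta}+3)^2},
\]
which is strictly positive for every $\beta>0$ (so $B=\emptyset$, consistent with Proposition~\ref{theorem: Ising on K3}). For a general connected vertex-transitive $G$ with $n\ge 3$ one chooses $A$ according to the local structure — three mutually adjacent vertices when a triangle exists, otherwise three vertices carrying a short path — and verifies that the first non-vanishing coefficient, a polynomial in the degree and the relevant codegrees, is nonzero. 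Establishing this nonvanishing in full generality is the technical heart of the argument; granting it, the bound $|B|\le n(n-1)$ and hence the theorem follow.
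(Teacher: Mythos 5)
Your overall strategy coincides with the paper's: extract a first-order (in \(h\)) necessary condition on a subsequential limit \(\mu_0\), observe that for the random cluster model this condition is a polynomial identity in \(x=e^{2\beta}\) of degree at most \(2|E|\le n(n-1)\), and then show the polynomial is not identically zero by expanding near \(\beta=0\). Your condition \(\dot\nu=\alpha D\), specialized to \(|A|=3\), is exactly the paper's Lemma~\ref{lemma: necessary condition} (your \(g_A(\beta)=C\,\mu_0(\mathcal O_A)-\E_{\mu_0}[|K|\mathbb{1}_{\mathcal O_A}]\) is, after multiplying by \(n\), identity~\eqref{eq: first version of equality}), and your derivation of it via a uniform Taylor expansion of \(\Phi_p\) about \(p=1/2\) together with the parity argument is a clean, somewhat more self-contained route than the paper's appeal to Theorem~1.7 of~\cite{fs2019}. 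The degree count giving \(|B|\le n(n-1)\) also matches the paper's.

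However, there is a genuine gap at precisely the step you yourself flag as ``the technical heart'': you never prove that \(g_A\not\equiv 0\). You verify it only for \(K_3\), assert without computation that for \(A\) spanning a triangle the \(\epsilon^3\)-coefficient is nonzero, and for triangle-free vertex-transitive graphs you only gesture at choosing ``three vertices carrying a short path.'' Since (as you correctly note) the \(\epsilon\)- and \(\epsilon^2\)-coefficients of \(g_A\) cancel, the whole theorem rests on an explicit third-order computation, and this is where the paper does its real work: Lemmas~\ref{lemma: Bernoulli translation} and~\ref{lemma: Bernoulli version} convert the random-cluster expectations into Bernoulli percolation with parameter \(\hat r=r/(2-r)\) up to \(O(\hat r^{m+1})\), and the enumeration of local edge patterns in Table~\ref{table: patterns o} yields \(3(n-1)\hat r^2+2(8-12n+3n^2)\hat r^3+O(\hat r^4)\) for one side against \(3(n-1)\hat r^2+6(n-1)(n-3)\hat r^3+O(\hat r^4)\) for the other, a discrepancy of \(-2\hat r^3\) independent of the graph. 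Without carrying out this (or an equivalent) computation and, in particular, without specifying which odd set \(A\) works for an arbitrary connected vertex-transitive graph, your argument establishes only that the theorem would follow \emph{if} \(g_A\not\equiv0\); it does not establish the theorem.
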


 Interestingly, Theorem~\ref{theorem: rcm limit} does not require \( n \) to be large. The set of exceptional values for \( \beta \) where the random cluster model could arise as a limit is a consequence of the proof strategy used, and could possibly be shown to be empty by using a different proof.

Our next   result shows that the third of the conjectures above is true when \( n \) is sufficiently large.
\begin{theorem}\label{theorem: no cr when critical or supercritical}
Let \( n \in  \mathbb{N} \) and let \( G \) be the complete graph on \( n \) vertices. 
Further, let  \( \hat \beta  > 0\) and  \( \beta \coloneqq \hat \beta / n \). 
 If \( \hat \beta \geq \hat \beta_c =  1 \) and \( n \) is sufficiently large, then \( X^{G, \beta, h} \) has no color representation for any sufficiently small \( h >0\).
\end{theorem}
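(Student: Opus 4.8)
My plan is to reduce the existence of a color representation to a finite‑dimensional feasibility problem at the level of the magnetization, and then to exhibit an obstruction governed by the Curie–Weiss phase transition. Since \( \nu_{G,\beta,h} \) is invariant under the full symmetric group acting on the \( n \) vertices of the complete graph, and \( \Phi_p \) commutes with this action, I would first average any color representation over the symmetric group; hence \( X^{G,\beta,h} \) has a color representation if and only if it has an exchangeable one. An exchangeable \( \mu \in \RER_V \) is determined by nonnegative weights \( c_\lambda \) indexed by the integer partitions \( \lambda \vdash n \) (the possible block‑size types), with \( \sum_\lambda c_\lambda = 1 \), and by the same symmetry matching \( \Phi_p(\mu)=\nu_{G,\beta,h} \) reduces to matching the law of the magnetization \( S=\sum_i \sigma_i \). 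Writing \( g_\lambda \) for the law of \( \sum_i \lambda_i \xi_i \) with \( \xi_i \) i.i.d.\ \( \mathrm{Ber}(p) \), and \( M_{\beta,h} \) for the magnetization law of \( X^{G,\beta,h} \), a color representation exists if and only if there are weights \( c_\lambda \ge 0 \) with \( \sum_\lambda c_\lambda g_\lambda = M_{\beta,h} \). Here \( p=p_{\beta,h} \) is forced by the one‑point marginal (Theorems~1.2 and~1.4 of \cite{fs2019}), and \( p_{\beta,h}\to 1/2 \) as \( h\to 0 \).

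This is a linear feasibility problem, so by Farkas' lemma it is infeasible precisely when there is a function \( \phi\colon\{0,1,\dots,n\}\to\R \) with \( \E_{g_\lambda}[\phi]\ge 0 \) for every \( \lambda\vdash n \) while \( \E_{M_{\beta,h}}[\phi]<0 \); constructing such a \( \phi \) is the goal. To find it I would use the Curie–Weiss structure: in the mean‑field scaling \( \beta=\hat\beta/n \) the magnetization fraction has, for large \( n \), probabilities \( \asymp \exp(n f(m)) \) with \( f(m)=\tfrac{\hat\beta}{2}m^2+hm+H((1+m)/2) \), where \( m=2S/n-1 \) and \( H \) is the binary entropy. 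For \( \hat\beta\ge 1 \) this \( f \) is nonconvex: when \( \hat\beta>1 \) it has two maxima at \( \pm m^* \) with \( 0<m^*<1 \), and a small field \( h>0 \) tilts their relative weights by the factor \( e^{2hm^*n} \).

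The key point is a rigidity forced by using a single color parameter \( p \). In any exchangeable color model the magnetization equals \( \sum_b |b|\,c_b \) over blocks \( b \), with \( c_b \) i.i.d.\ \( \mathrm{Ber}(p) \) of mean exactly \( p \); thus \( \E[S/n]=p \), which pins \( p=p_{\beta,h}=\tfrac12+\tfrac{m^*}{2}\tanh(hm^*n)+o(\,\cdot\,) \). On the other hand, reproducing the two wells at \( \pm m^* \) with the tilt \( e^{2hm^*n} \) forces the coloring rule to separate the wells with a strictly larger color‑bias than the marginal supplies: already for the representative ``one giant block plus singletons'' partition one computes that matching the well weights forces \( p=\tfrac12+\tfrac12\tanh(hm^*n) \), incompatible with the marginal value unless \( m^*=1 \). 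The discrepancy \( \bigl(\tfrac12-\tfrac{m^*}{2}\bigr)\tanh(hm^*n) \) is strictly positive and of first order in \( h \), and it closes exactly at \( h=0 \) (the tilt factor becomes \( 1 \)), which is precisely why a representation exists there (the random cluster model). I would promote this into a uniform statement: for \( \hat\beta\ge1 \) and \( n \) large, no choice of \( c_\lambda\ge 0 \) can simultaneously match the mean, the inter‑well separation, and the tilt of \( M_{\beta,h} \); equivalently, I would build \( \phi \) from a first‑moment term, a second‑moment term detecting the well separation, and an odd term detecting the tilt, tuned so that \( \phi \) vanishes as \( h\to 0 \).

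The hard part is the uniform verification \( \E_{g_\lambda}[\phi]\ge 0 \) over all integer partitions \( \lambda \), since the laws \( g_\lambda \) range from \( \mathrm{Binomial}(n,p) \) (all singletons, concentrated in the ``valley'' near \( m=0 \)) to two‑point laws (one giant block), with every intermediate block profile in between; I must rule out that any such law, or any mixture, over‑tilts the two wells beyond what the forced marginal permits. This requires sharp large‑deviation control of \( \sum_i \lambda_i \xi_i \) that is uniform in \( \lambda \) and in small \( h \), matched against the Curie–Weiss asymptotics of \( M_{\beta,h} \). I expect the main difficulty to lie exactly here, and secondarily in the critical case \( \hat\beta=1 \), where \( m^*=0 \) and the two wells merge into a single degenerate (quartic) well; there the separation‑versus‑tilt rigidity must be recovered from the critical scaling of the magnetization, and I anticipate this boundary case needing a finer, separate estimate.
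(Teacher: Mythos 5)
Your reduction to exchangeable color representations and then to the magnetization-law feasibility problem \( \sum_\lambda c_\lambda g_\lambda = M_{\beta,h} \), \( c_\lambda \ge 0 \), is correct (for exchangeable \( \mu \) and \( \nu \) on \( \{0,1\}^n \), matching the law of \( \|\sigma\| \) is indeed equivalent to matching the full law, and averaging over \( S_n \) is legitimate). But the proof stops exactly where the theorem's content begins: the separating functional \( \phi \) is never constructed, and the inequality \( \E_{g_\lambda}[\phi]\ge 0 \) uniformly over all \( \lambda \vdash n \) is never verified. The heuristic you offer in its place — that the single partition ``one giant block plus singletons'' would force \( p=\tfrac12+\tfrac12\tanh(hm^*n) \), in conflict with the marginal — is an argument about one extreme point of the feasible cone and does not exclude mixtures of the \( g_\lambda \); ruling out all mixtures is precisely the Farkas dual problem you have deferred. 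There is also an order-of-limits issue: the theorem fixes a large \( n \) and takes \( h\to 0 \), so your discrepancy \( (\tfrac12-\tfrac{m^*}{2})\tanh(hm^*n) \) is \( O(h) \) and vanishes in the relevant regime; to extract an obstruction surviving as \( h\to 0 \) one must work with the first-order (derivative-in-\(h\)) information at \( h=0 \), which your sketch gestures at but does not set up. Finally, the critical case \( \hat\beta=1 \) is explicitly left open.

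For comparison, the paper avoids the exponentially large family \( \{g_\lambda\}_{\lambda\vdash n} \) entirely. It first passes to the limit \( h\to 0 \) via a compactness argument and Theorem~1.7 of~\cite{fs2019}, obtaining a linear system (involving \( \nu_{1/2} \) and \( \nu'_{1/2} \)) that any subsequential limit of color representations must satisfy; it then projects onto the marginal on only \( 4 \) coordinates (or \( 5 \) in the critical case), where permutation invariance leaves a system whose null space has dimension one (resp.\ two). Infeasibility of this \emph{relaxation} is then checked by enumerating the finitely many basic solutions with a zero entry and showing, using the Curie--Weiss asymptotics for \( (2\|X\|-n)/n \) (resp.\ the \( n^{3/4} \) critical scaling), that each has a strictly negative entry for large \( n \). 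That low-dimensional projection is the idea your proposal is missing: it converts the ``uniform verification over all integer partitions'' that you identify as the hard part into a finite, explicit computation.
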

As a consequence of this theorem, the fifth conjecture above is true when \( \beta > 1/n \) and \( n \) is sufficiently large.

Our last result gives a partial answer to the second conjecture.  
\begin{theorem}\label{theorem: very large h}
Let \( n \in  \mathbb{N} \) and let \( G \) be the complete graph on \( n \) vertices. Let \( h > 0 \) and  \( \beta = \beta(h) \) be such that \( (n-1) \beta(h) < h \).
	Then  \( X^{G, \beta(h),h} \) has a color representation for all sufficiently large \(h \).
\end{theorem}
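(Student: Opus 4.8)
The plan is to exploit the full \( S_n \)-symmetry of the complete graph. Since \( \Phi_p \) is affine in \( \mu \) and equivariant under the action of \( S_n \) on \( V \), averaging any color representation over \( S_n \) lets me assume \( \mu \) is \( S_n \)-invariant; such a \( \mu \) is encoded by a probability distribution \( \rho \) over the integer partitions \( \lambda = (\lambda_1, \ldots, \lambda_m) \vdash n \) recording its block sizes. An \( S_n \)-invariant measure on \( \{0,1\}^V \) is determined by the law of \( K := \#\{ i : X_i = 1 \} \), and \( \Phi_p(\mu) \) is \( S_n \)-invariant whenever \( \mu \) is, so \( \Phi_p(\mu) = \nu_{G,\beta,h} \) reduces to an equality of the laws of \( K \). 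Setting \( L := n - K \) and \( q := 1 - p \) and coloring the blocks of a partition of type \( \lambda \) independently, one has \( L = \sum_i \lambda_i \bar c_i \) with \( \bar c_i \) i.i.d.\ \( \mathrm{Ber}(q) \), so the generating function of \( L \) is \( \prod_i (p + q u^{\lambda_i}) \); moreover the marginal constraint forces \( p = p_{\beta,h} \) and makes the first moment automatic (a single vertex meets exactly one block). Thus the theorem reduces to: with \( q = 1 - p_{\beta,h} \), find \( \rho \geq 0 \) with \( \sum_\lambda \rho(\lambda) = 1 \) such that
\[
\sum_{\ell=0}^n \nu_{G,\beta,h}(L = \ell)\, u^\ell \;=\; \sum_{\lambda \vdash n} \rho(\lambda) \prod_i \bigl( p + q u^{\lambda_i} \bigr).
\]
For the complete graph every block is connected, so \( \RER_V^G = \RER_V \) and there is no further constraint on \( \rho \).

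To understand the left-hand side I would use the Curie--Weiss structure. Writing the weight of a configuration with \( k \) ones as proportional to \( \binom{n}{k} \exp\bigl( 2\beta(k - n/2)^2 + h(2k-n) \bigr) \) and applying the Hubbard--Stratonovich identity \( e^{2\beta m^2} = \E_z\, e^{2\sqrt{\beta}\, m z} \) for a standard Gaussian \( z \), one linearizes the quadratic term and obtains the exact representation \( \nu_{G,\beta,h} = \int_0^1 \mathrm{Ber}(\theta)^{\otimes n}\, \Lambda(d\theta) \) for an explicit mixing measure \( \Lambda \) on \( (0,1) \) with \( \int \theta\, \Lambda(d\theta) = p \). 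Hence the generating function of \( L \) equals \( \int_0^1 (\theta + (1-\theta)u)^n\, \Lambda(d\theta) \), and expanding \( \theta + (1-\theta)u = (p + qu) + (\theta - p)(1-u) \) yields the clean form
\[
\sum_{\ell=0}^n \nu_{G,\beta,h}(L = \ell)\, u^\ell \;=\; \sum_{j=0}^n \binom{n}{j} m_j\, (p + qu)^{n-j}(1-u)^j,
\]
where \( m_j := \int (\theta - p)^j\, \Lambda(d\theta) \) are the central moments of \( \Lambda \), with \( m_0 = 1 \) and \( m_1 = 0 \). In the regime \( (n-1)\beta < h \) with \( h \) large, \( \Lambda \) concentrates near \( \theta = 1 \) and the \( m_j \) are small, with even moments positive and odd moments negative (the field makes \( \Lambda \) left-skewed).

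The final and most delicate step is to realize this polynomial as a nonnegative combination of the block-products. Here I would take the natural family \( \lambda^{(s)} = (s, 1^{n-s}) \), \( s = 1, \ldots, n \), of partitions consisting of one block of size \( s \) together with singletons, and use identities such as \( p + q u^2 = (p+qu)^2 + pq(1-u)^2 \) together with their analogues for larger blocks: replacing \( s \) singletons by one block of size \( s \) changes the generating function by \( (B_s - B_1^s)(p+qu)^{n-s} \), whose top-degree part is \( (-1)^s q(1 - q^{s-1})(1-u)^s (p+qu)^{n-s} \). Thus a block of size \( s \) contributes a term of \emph{sign} \( (-1)^s \) to the coefficient of \( (1-u)^s \), which matches the sign of the target moment \( m_s \). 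Solving the resulting (essentially triangular) system for the weights \( \rho(\lambda^{(s)}) \) from \( s = n \) downward then produces a candidate \( \rho \) with almost all mass on the all-singleton partition and exponentially little on partitions with large blocks.

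I expect the main obstacle to be exactly the nonnegativity of \( \rho \). Because the external field makes the odd central moments genuinely nonzero, one cannot get away with only even-sized corrections, and the signs of all the coefficients generated by the block identities must be controlled simultaneously; the cross-terms coming from the factors \( (p+qu)^{n-s} = (1-q(1-u))^{n-s} \) mix the orders and must be shown not to flip any sign. This is precisely where \( (n-1)\beta < h \) and large \( h \) should enter: they force \( q = 1 - p_{\beta,h} \) to be small enough relative to the correlation strength that the singleton-dominated solution stays in the nonnegative cone. Quantitatively, I would compare the target ratios \( \nu_{G,\beta,h}(L = \ell + 1)/\nu_{G,\beta,h}(L = \ell) \approx \tfrac{n-\ell}{\ell+1} e^{-2(n-2\ell-1)\beta}\, e^{-2h} \) against the binomial ratios produced by the all-singleton partition, and show the discrepancy can be absorbed with nonnegative weight uniformly in \( \ell \). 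Verifying this positivity for all \( \ell \) at once is where the real work lies.
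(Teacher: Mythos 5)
Your reduction to an \( S_n \)-invariant representation supported on partitions with a single non-singleton block is sound, and it in fact recovers exactly the ansatz the paper uses: the candidate solution~\eqref{eq: the formal solution} is supported on the partitions \( \pi[T] \) (one block \( T \) with \( |T| \geq 2 \), all other elements singletons), which after symmetrization are precisely your \( \lambda^{(s)} = (s,1^{n-s}) \). Your route to this ansatz --- the exact mixture representation \( \nu_{K_n,\beta,h} = \int \mathrm{Ber}(\theta)^{\otimes n}\,\Lambda(d\theta) \) via Hubbard--Stratonovich, followed by the expansion of the generating function of \( L \) in the central moments \( m_j \) of \( \Lambda \) --- is a genuinely different and arguably cleaner way to organize the linear algebra than the paper's, which instead imports an explicit formula for this formal solution from~\cite{fs2019} and works directly with the quantities \( \nu_{K_n,\beta,h}(0^S) \).

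However, the proof is not complete: the entire content of the theorem is the nonnegativity of the resulting weights, and that is exactly the step you defer (``this is where the real work lies''). Several assertions on which your positivity argument would rest are left unproved: that the odd central moments \( m_j \) are negative for large \( h \); that matching the sign of the top-degree coefficient of \( (1-u)^s \) suffices, when each block of size \( s \) also contributes to the coefficient of \( (1-u)^j \) for every \( 2 \leq j \leq n \) through the factor \( (p+qu)^{n-s} = (1-q(1-u))^{n-s} \), so the system is not actually triangular and the signs of the solved-for \( \rho(\lambda^{(s)}) \) do not follow from the signs of the \( m_s \); and that the residual mass placed on the all-singleton partition is nonnegative. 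The hypothesis \( (n-1)\beta(h) < h \) is likewise invoked only heuristically, whereas in the paper it enters at one precise point: it guarantees \( \nu_{K_n,\beta,h}(0^S) \sim \nu_{K_n,\beta,h}(0^S 1^{[n]\setminus S}) \) for every \( S \) (see~\eqref{eq: two asymptotics}), which yields the explicit asymptotics~\eqref{eq: nu1S asymptotics} and hence an alternating-sum expression for \( \mu(\pi[T]) \) whose positivity follows by pairing consecutive terms. Some quantitative substitute for that computation is needed before your argument closes; as written, it establishes only that a candidate solution of the right shape exists, which was already known.
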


Simulations suggests that the previous result should be possible to extend to \( \beta(h) < h \). This is a much stronger statement, especially for large \(n \), and would  require a different proof strategy. The assumption that \( (n-1)\beta (h) \leq h \), made in Theorem~\ref{theorem: very large h}, is however a quite natural condition, since this exactly corresponds to that \( \nu_{G, \beta,h}(1^S0^{[n]\backslash S}) \) is decreasing in \( |S| \), for \( S \subseteq [n] \) (here \( 1^S 0^{[n]\backslash S}\) denotes the binary string \( x \in \{ 0,1 \}^n\) with  \( x(i) = 1 \) when \( i \in S \) and and \( x(i) = 0 \) when \( i \not \in S \)).

The rest of this paper will be structured as follows. In Section~\ref{section: background}, we give the background and definitions needed for the rest of the paper. In Section~\ref{sec: no external field}, we give a proof of Theorem~\ref{theorem: color representations with no external field}. In Section~\ref{section: small complete graphs}, we give proofs of Propositions~\ref{theorem: Ising on K3} and~\ref{theorem: Ising on K4}, and also discuss what happens when \( G \) is the complete graph on five vertices. Next, in Section~\ref{section: small h}, we prove Theorem~\ref{theorem: rcm limit} and Theorem~\ref{theorem: no cr when critical or supercritical} and in Section~\ref{section: large h}, we give a proof of Theorem~\ref{theorem: very large h}. Finally, in Section~\ref{section: technical lemmas}, we state and prove a few technical lemmas which are used throughout the paper.

\section{Background and notation}\label{section: background}

The main purpose of this section is to give definitions of the notation used throughout the paper, as well as some more background to the questions studied.

\subsection{The original divide and color model}

When the generalized divide and color model was introduced in~\cite{st2017}, it was introduced as a generalization of the so-called {divide and color model}, first defined by H\"aggstr\"om in~\cite{oh1999}. To define this family of models, let \( G  \) be a finite graph with vertex set \( V \) and edge set \( E \). Let \( r\in (0,1) \), and let \( \lambda \) be a finitely supported probability measure on \( \mathbb{R} \). 
The \emph{divide and color model} \( X = (X_v)_{v \in V}\) (associated to \( r \) and \( \lambda \)) is the random coloring of \( V \) obtained as follows.
\begin{enumerate}
	\item Pick \( \pi \sim \mu_{G,r,1} \). 
	\item Let \( \pi_1, \ldots, \pi_m \) be the partition elements of \( \pi \). Independently for each \( i \in \{ 1,2, \ldots, m\}\), pick \( c_i \sim \lambda \), and assign all the vertices \( v \in \pi_i\) the color \( c_i \) by letting \( X_v = c_i \) for all \( v \in \pi_i \)
\end{enumerate}
Note that if \( \lambda = (1-p)\delta_0 + p \delta_1 \), then \( X \sim \Phi_p(\mu_{G,r,1}) \).

Since its introduction in~\cite{oh1999}, properties of the divide and color model have been studied in several papers, including e.g.~\cite{bbt2013, bcm2009, g}. Several closely related models, which all in some way generalize the divide and color model by considering more general measures \( \lambda \) have also been considered (see e.g.~\cite{ab2010}).

We stress that this is not the model we discuss in this paper. To avoid confusion between the divide and color model and the generalized divide and color model, we will usually talk about color representations rather than generalized divide and color models.

\subsection{Generalizations of the coupling between the Ising model and the random cluster model}
When \( h >0  \), there is a generalization of the random cluster model (see e.g.\ \cite{bbck2000}) from which the Ising model can be obtained by independently  assigning colors to different partition elements. This model has been shown to have properties which can be used in similar ways as analogue properties of the random cluster model.
However, since this model use different color probabilities for different partition elements, it is not a generalized divide and color models.
On the other hand, Proposition~\ref{theorem: Ising on K4} shows that there are graphs \( G \) and parameters \( \beta,h > 0 \) such that \( X^{G,\beta,h}\) has no color representation. This  motivates considering  less restrictive generalizations of the random cluster model, such as the one given in~\cite{bbck2000}.

\subsection{General notation}

Let \( \mathbb{1}\) denote the indicator function, and for each \( n \in  \mathbb{N} \), define \( [n] \coloneqq \{ 1,2, \ldots, n \} \).

 When \( G \) is a graph, we will let \( V(G) \) denote the set of vertices of \( G \) and \( E(G) \) denote its set of edges. For each graph \( G \) with \( |V(G)| = n\), we assume that a bijection from   \( V(G) \) to  \( [n] \) is fixed, and in this way identify binary strings \( \sigma \in \{ 0,1 \}^{V(G)} \) with the corresponding binary strings in \( \{ 0,1 \}^n \).  The complete graph on \( n \) vertices will be denoted by \( K_n \).

For all finite sets \( S \),  disjoint sets \( T,T' \subseteq S \) and \( \sigma = (\sigma_i)_{i \in S}\in \{ 0,1 \}^S\), we now make the following definitions. Let \( \sigma|_T \) denote the restriction of \( \sigma \) to \( T \).
Write \( \sigma|_T \equiv 1 \) if \( \sigma_i = 1 \) for all \( i \in T \), and analogously write \( \sigma|_T \equiv 0 \) if \( \sigma_i = 0 \) for all \( i \in T \).
We let \( 1^{T}0^{T'} \) denote the unique binary string  \( \sigma \in  \{ 0,1 \}^{T \cup T'} \) with   \( \sigma|_T \equiv 1 \)  and   \( \sigma|_{T'} \equiv 0 \). Whenever \( \nu \) is a signed measure  on \( (\{ 0,1 \}^S,\sigma(\{0,1 \}^S) \), we write \( \nu(1^T) \coloneqq \sum_{T' \subseteq [n]\backslash T} \nu(1^{T \cup T'}0^{S \backslash (T \cup T')}) \).  
We let  \( \| \sigma \|  \coloneqq  \sum_{i \in S} \sigma_i \) and define  \( \chi_T(\sigma) \coloneqq \prod_{i \in T} (-1)^{\mathbb{1}_{\sigma_i = 0}}= (-1)^{n-\| \sigma \|} \). %

We now give some notation for working with set partitions. To this end, recall that when \( S \) is a finite set we let \( \mathcal{B}_S \) denote the set of partitions of \( S \). If \( S = [n] \) for some \( n \in \mathbb{N} \), we let \( \mathcal{B}_n \coloneqq \mathcal{B}_{[n]}\). 
If \( \pi \in \mathcal{B}_S \) has partition elements \( \pi_1 \), \( \pi_2 \), \ldots, \( \pi_m \), we write \( \pi = (\pi_1, \ldots, \pi_m) \). 
Now assume that a finite set \( S \), a partition \( \pi \in \mathcal{B}_S \) and a binary string  \( \sigma \in \{ 0,1 \}^S\) are given.
We write \( \pi \lhd \sigma \) if \( \sigma \) is constant on the partition elements of \( \pi \). If \( \pi \lhd \sigma \), \( \pi_i \) is a partition element of \( \pi \) and \( j \in \pi_i \), we write \( \sigma_{\pi_i} \coloneqq \sigma_j \). Note that this function is well defined exactly when \( \pi \lhd \sigma \). Next, we let \( \| \pi \| \) denote the number of partition elements of \( \pi \). Combining these notations, if \( \pi \lhd \sigma \)  then we let \( \| \sigma \|_\pi \coloneqq \sum_{i = 1}^{\| \pi \|} \sigma_{\pi_i} \). If \( T \subseteq S \), we write \( \pi|_T \) to denote the restriction of the \( \pi \) to the set \( T \) (so that \( \pi|_T \in \mathcal{B}_T \)). If   \( \mu \) is a signed measure on \( (\{ 0,1 \}^S, \sigma(\{0,1\}^S)) \), \( T \subseteq S \) and \( \pi \in \mathcal{B}_T \), we let \( \mu|_T( \pi) \coloneqq \mu \bigl( \{ \pi' \in \mathcal{B_S} \colon \pi'|_T = \pi \} \bigr) \).
If \( \pi',\pi'' \in \mathcal{B}_S \), then we write \( \pi' \lhd \pi'' \) if for each partition element of \( \pi' \) is a subset of some partition element of \( \pi'' \).

We let \( S_n \) denote the set of all permutations of \( [n] \). \( S_n \) acts naturally on \( \mathcal{B}_n \) by permuting the elements in \( [n] \).  When \( \tau \in S_n \) and \( \pi = (\pi_1, \ldots, \pi_m)\in \mathcal{B}_n \), we let \( \tau \circ \pi \coloneqq (\tau(\pi_1), \ldots, \tau(\pi_m))\). If \( \mu \) is a signed measure on \( (\{ 0,1 \}^n, \sigma(\{0,1\}^n)) \) which is such that \( \mu(\tau \circ \pi) = \mu(\pi) \) for all \( \tau \in S_n \) and \( \pi \in \mathcal{B}_n \), we say that \( \mu \) is \emph{permutation invariant}.

Finally, recall when  \( G = (V,E) \) is a finite graph and  \( w \in \{ 0,1 \}^E \), we define  \( E_w \coloneqq \{ e \in E \colon w_e = 1 \}\) and note that this defines a partition \( \pi[w] \in \mathcal{B}_V\) if we let \( v ,v' \in V \) be in the same partition element of \( \pi[w] \) if and only if they are in the same connected component of the graph \( (V,E_w) \). If \( T \subseteq V \) and \( |T| \geq 2 \), then we let \( \pi[T] \) be the unique partition in \( \in \mathcal{B}_V\) in which \( T \) is a partition element and all other partition elements are singletons.

\subsection{The associated linear operator}
Let \( n \in  \mathbb{N} \),  \( \nu \in \mathcal{P}(\{ 0,1 \}^n) \) and
 \( p = \nu(1^{\{ 1 \}}) \).
It was observed in~\cite{st2017} that if \( \mu \in \RER_{[n]} \) is such that \( \Phi_p(\mu) = \nu\), then \( \mu \) and \( \nu \) satisfy the following set of linear equations.
\begin{equation}\label{eq: linear system}
	\nu(\sigma) = \sum_{\pi \in \mathcal{B}_n \colon \pi \lhd \sigma} p^{\| \sigma\|_\pi}(1-p)^{\|\pi\| - \| \sigma\|_\pi}\mu(\pi), \quad \sigma \in \{ 0,1 \}^n.
\end{equation}
Moreover, whenever a non-negative measure \( \mu \) on \( (\mathcal{B}_n, \sigma(\mathcal{B}_n))\) satisfies these equations, then \( \mu \in \RER_{[n]} \) and \( \Phi_p(\mu) = \nu \), i.e. then \( \mu \) is a color representation of \( \nu \). A signed measure \( \mu  \) on \( (\mathcal{B}_n, \sigma(\mathcal{B}_n)) \) which satisfies~\eqref{eq: linear system}, but which is not necessarily non-negative, will be called a \emph{formal solution} to~\eqref{eq: linear system}. 
If we for a finite set \( S \) let \( \RER_S^* \) denote the set of signed measures on \( (\mathcal{B}_S,\sigma(B_S) ) \) and \( \mathcal{P}^*(\{ 0,1 \} ) \) denote the set of signed measures on \( (\{ 0,1 \}^n, \sigma(\{ 0,1 \}^n)) \), then for each \( p \in (0,1) \) we can use~\eqref{eq: linear system} to  extend \( \Phi_p \colon \RER_S \to \mathcal{P}(\{ 0,1 \}^S) \) to a mapping \( \Phi_S^* \colon \RER_S^* \to \mathcal{P}^*(\{ 0,1 \} )  \), whose restriction to \(\RER_S \) is equal to \( \Phi \).

The matrix corresponding to the system of linear equations given in~\eqref{eq: linear system} is given by
\begin{equation}
	A_{n,p}(\sigma, \pi)  \coloneqq \begin{cases}
	 p^{\| \sigma\|_\pi}(1-p)^{\|\pi\| - \| \sigma\|_\pi} & \text{if } \pi \lhd \sigma \cr
	 0 &\text{else}
	\end{cases}, \quad  \sigma \in \{ 0,1 \}^n,\,  \pi \in \mathcal{B}_n .
\end{equation}
It was shown in~\cite{fs2019} that \( A_{n,1/2} \) has rank \( 2^{n-1}\), and that when \( p \in (0,1) \backslash \{ 1/2 \} \), then \( A_{n,p} \) has rank \( 2^n-n \). When we use the matrix \( A_{n,p} \) to think about~\eqref{eq: linear system} as a system of linear equations, we will abuse notation slightly and let \( \mu \in \RER_{[n]}^*\)  denote both the signed measure and the corresponding vector \( (\mu(\pi))_{\pi \in \mathcal{B}_n} \), given some unspecified and arbitrary ordering of \( \mathcal{B}_n \).

\subsection{Subsequential limits of color representations}

Assume that \( n \in \mathbb{N} \) and that a family \(  \mathcal{N} = (\nu_p)_{p \in (0,1)} \) of probability measures on \( \mathcal{P}(\{ 0,1 \}^n) \) are given. Further, assume that for each \( p \in (0,1 ) \), the marginal distribution of \( \nu_p \) is given by \( (1-p) \delta_0 + p\delta_1 \). We say that a measure \( {\mu \in RER_{[n]}}\) arise as a subsequential limit of color representations of measures in \( \mathcal{N} \) as \({ p \to 1/2} \), if there is a sequence \( p_1, p_2, \ldots \) in \( (0,1)\backslash \{ 1/2 \} \) with \( \lim_{j \to \infty} p_j = 1/2 \) and a measure  \( \mu_{j} \in \Phi_{p_j}^{-1}(\nu_j) \) such that for all \( \pi \in \mathcal{B}_n \) we have \( \lim_{j \to \infty} \mu_j(\pi) =  \mu(\pi) \).

\section{Color representations \texorpdfstring{of \( X^{G, \beta,0} \)}{with no external field}}\label{sec: no external field}

In this section we  give a proof of Theorem~\ref{theorem: color representations with no external field}.  

\begin{proof}[Proof of Theorem~\ref{theorem: color representations with no external field}]

When \( p = 1/2 \), \( \sigma \in \{ 0,1 \}^n \) and \( \pi \in \mathcal{B}_n \), then
\begin{equation}\label{eq: main system}
A(\sigma, \pi) \coloneqq A_{n,p} (\sigma,\pi) =  \begin{cases}
2^{-\| \pi\| } &\textnormal{if } \pi \lhd \sigma ,\cr
0 &\textnormal{ otherwise.}
\end{cases}
\end{equation}
For \( S \subseteq [n] \) and \( \pi \in \mathcal{B}_n \) define
\begin{equation}
A' (S,\pi)\coloneqq  \sum_{\sigma \in \{ 0,1 \}^n \colon \sigma|_S \equiv 1} A(\sigma, \pi)
= 
2^{-\| \pi|_S\| }.
\end{equation}
Since   for any \( S \subseteq [n] \) and \( \pi \in \mathcal{B}_n \) we have 
\begin{align*}
	&
	\sum_{T \subseteq [n] \colon   S \subseteq T} A'(T,\pi) (-1)^{|T|-|S|} 
	=\sum_{T \subseteq [n] \colon \atop S \subseteq T} \sum_{\sigma \in \{ 0,1 \}^n \colon \atop\sigma|_T \equiv 1} A(\sigma, \pi)(-1)^{|T|-|S|} 
	\\&\qquad 
	=\sum_{\sigma \in \{ 0,1 \}^n \colon \atop\sigma|_S \equiv 1}A(\sigma, \pi)
	\sum_{T \subseteq [n] \colon\atop \sigma|_T \equiv 1} (-1)^{|T|-|S|} 
	=
	A(1^S0^{[n]\backslash S},\pi) 
\end{align*}
it follows that \( A \) and \( A' \) are row equivalent. Moreover, by M\"obius inversion theorem, applied to the set of subsets of \( [n] \) ordered by inclusion, the matrix
\begin{equation}
A'' (S,\pi)\coloneqq  \sum_{S' \colon S' \subseteq S} 2^{|S'|}(-1)^{|S|-|S'|} A'(S',\pi), \quad S \subseteq [n],\, \pi \in \mathcal{B}_n
\end{equation}
is   row equivalent to \( A' \), and hence also to \( A \).
By Theorem~1.2~in~\cite{fs2019}, \( A \)   has rank \( 2^{n-1} \), and hence the same is true for \( A'' \).

Now note that if \( S \subseteq [n] \),  \( \pi \in \mathcal{B}_n \), and we  let \( T_1 \), \( T_2\), \ldots, \( T_{\| \pi|_S \|}\) denote the partition elements of \( \pi|_S \), then
\begin{align*}
&\sum_{S'   \colon S' \subseteq S} 2^{|S'|}(-1)^{|S|-|S'|} A'(S',\pi) 
=
2^{|S|}\sum_{S'  \colon S' \subseteq S} (-2)^{|S'|-|S|} A'(S',\pi) 
\\&\qquad = 2^{|S|} \sum_{S'  \colon S' \subseteq S} (-2^{-1})^{|S| - |S'|} \cdot 2^{-\| \pi|_{S'}\| } 
\\&\qquad  = 2^{|S|}\sum_
{\substack{S_1, \ldots, S_m  \colon \\  \forall i \in [m] \colon S_i \subseteq T_i }} \prod_{i=1}^m (-2^{-1})^{|T_i| - |S_i|} 2^{-\mathbb{1}_{S_i \not = \emptyset}}
\\&\qquad  = 2^{|S|}\prod_{i=1}^m \sum_{S_i \colon S_i \subseteq T_i}  (-2^{-1})^{|T_i| - |S_i|} \cdot  2^{-\mathbb{1}_{S_i \not = \emptyset}}
 \\&\qquad =2^{|S|} \prod_{i=1}^m (1 + (-1)^{|T_i|} ) \cdot (2^{-1})^{|T_i|+1}
 \\&\qquad =  \mathbb{1} (\pi|_S \text{ has only even sized partition elements}).
\end{align*}
and hence 
\begin{equation}
	A''(S,\pi) = \mathbb{1}(\pi|_S \text{ has only even sized partition elements}).
\end{equation}

Let \( T \) be a spanning tree of \( G \). Let \( \mathcal{B}_n^{T} \subseteq \mathcal{B}_n \) denote the partitions of \( [n] \) whose partition elements induce connected subgraphs of \( T \). Note that the number of such partitions is equal to \( 2^{n-1}\). For \( S \subseteq [n] \) with \( |S| \) even and \( \pi \in \mathcal{B}_n^T \),  define
\begin{equation}
	A_T(S,\pi) \coloneqq  \mathbb{1} (\pi|_S \text{ has only even sized partition elements}).
\end{equation}
Then \( A_T \) is a submatrix of \( A'' \). We will  show that \( A_T \) has full rank. Since \( A_T \) is a \( 2^{n-1} \) by \( 2^{n-1} \) matrix, this is equivalent to having non-zero determinant. 
To see that \( \det A_T \not = 0 \), note first that if \( S \subseteq [n] \), \( |S| \) is even and \( \pi \in \mathcal{B}_n^T \), then
\begin{align*}
  B(S, \pi) &\coloneqq  \sum_{\pi' \colon \pi' \lhd \pi} (-1)^{|\pi| - |\pi'|} A_T(S,\pi') 
  \\& = \mathbb{1} \left (\begin{matrix}\pi \text{ has only even sized partition elements}\\ \text{and any finer partition of } S \text{ has at least } \\ \text{one odd sized partition element}  \end{matrix} \right).
\end{align*}
Since all partition elements of \( \pi \in \mathcal{B}_n^T \) induce connected subgraphs of \( G \), \( B\) is a permutation matrix. Since all permutation matrices have non-zero determinant, this implies that \( B \), and hence also \( A_T \), has full rank.
%
 %
Since \( A_T \) has \( 2^{n-1} \) rows and columns, this implies in particular that \( A_T \) has rank \( 2^{n-1 } \). On the other hand,  \( A_T \) is a submatrix of \( A'' \), and \( A'' \) is row equivalent to \( A \) which also has rank \( 2^{n-1} \). This implies in particular that when we solve~\eqref{eq: linear system}, we can use the columns corresponding to partitions in \(   \mathcal{B}_n^T \)  as dependent variables. 

Now recall that since \( X^{G, \beta,0}\) is the Ising model on some graph \( G \), \( X^{G, \beta,0} \) has at least one color representation given by \( \mu_{G,1-e^{-2\beta}} \). The random cluster model \( \mu_{G,1-e^{-2\beta}} \) gives strictly positive mass to all partitions \( \pi \in \mathcal{B}_n \) whose partition elements induce connected subgraphs of \( G \). In particular, it gives strictly positive mass to all partitions in \(  \mathcal{B}_n^T \). If we use the columns corresponding to partitions in \(   \mathcal{B}_n^T \)  as dependent variables, then all dependent variables are given positive mass by \( \mu_{G, 1-e^{-2\beta}} \). Since \( n \geq 3 \), there is at least one free variable. By continuity, it follows that we can find another color representation by increasing the value of this free variable a little. If \( G \) is not a tree, there will be at least of free variable corresponding to a partition \( \pi \in \mathcal{B}_n \backslash \mathcal{B}^T \), whose partition elements induce connected subgraphs of \( G \) (but not \( T \)). From this the desired conclusion follows.
\end{proof}

\begin{remark}
It is not the case that all sets of \( 2^{n-1} \) columns of \( A_{n,1/2} \) have full rank. To see this, note first  that there are exactly \( |\mathcal{B}_n| - |\mathcal{B}_{n-1}| \) partitions in \( \mathcal{B}_n \) in which \( 1 \) is not a singleton. If  \( |\mathcal{B}_n| - |\mathcal{B}_{n-1}| \geq 2^{n-1} \), then there is a set \( \mathcal{B}' \subseteq \mathcal{B}_n \) of such partitions of size \( 2^{n-1} \). An easy calculation shows that this happens whenever \( n \geq 4 \).
 Let \( \mu \in \RER_{[n]}^*\) be a signed measure on \( \mathcal{B}_n \) with support only on \(\mathcal{B}'\), and let \( \nu \coloneqq \Phi_{1/2}^*(\mu) \). Then by definition, if \( \nu \in \mathcal{P}(\{ 0,1 \}^n)\) then  
 if \( \nu(1^{\{ 1 \}} 0^{[n] \backslash \{ 1 \}})=0 \). In particular, this implies that the columns of \( A_{n,1/2} \) corresponding to the partitions in \( \mathcal{B}' \) cannot have full rank. 
  
\end{remark}

\section{Color representations of \texorpdfstring{\( X^{K_n, \beta,h} \) for \( n \in \{ 3,4,5\} \)}{the Ising model on K3, K4 and K5}}\label{section: small complete graphs}

In this section we provide proofs of Proposition~\ref{theorem: Ising on K3} and~\ref{theorem: Ising on K4}. In both cases, Mathematica was used to simplify the formulas for the color representations for different values of \( \beta \) and \( h \).

\begin{proof}[Proof Proposition~\ref{theorem: Ising on K3}]
Fix  \( h > 0 \)  and let \( p  =  \nu_{K_3, \beta,h}(1^{\{1\}})  \). Note that since \( |V(K_3)| = 3 \), the relevant set of partitions is given by 
\[ \mathcal{B}_3 = \Bigl\{ \bigl(\{1,2,3\}\bigr),\, \bigl(\{1,2\},\{3\}\bigr),\, \bigl(\{1\},\{2,3\}\bigr),\, \bigl(\{1,3\},\{2\}\bigr),\, \bigl(\{1\},\{2\},\{3\}\bigr)\Bigr\}. \]
By Theorem 1.4 in~\cite{fs2019}, the linear equation system \( A_{3,p} \mu = \nu_{K_3,\beta, h} \) has a unique formal solution \( \mu \in RER_{[3]}^*\). With some work, one verifies that this solution satisfies 
\[
 \mu\bigl((\{1,2\},\{3\})\bigr) = \mu\bigl((\{1,3\},\{2\})\bigr) = \mu\bigl((\{1\},\{2,3\})\bigr),
 \]
  and
\[
\mu\bigl((\{1,2,3\})\bigr) = \frac{\left(e^{4\beta }-1\right)^2    e^{4 h} \left(e^{4 \beta }+e^{4 \beta +2 h}+e^{4 \beta +4 h}+5 e^{2 h}+2 e^{4 h}+2\right)}{\left(e^{4 \beta }+2 e^{2 h}+e^{4 h}\right) \left(e^{4 \beta +4 h}+2 e^{2 h}+1\right) \left(e^{4 \beta }+e^{4 \beta +2 h}+e^{4 \beta +4 h}+e^{2 h}\right)}
\]
\[
\mu\bigl((\{1\},\{2,3\})\bigr) = \frac{\left(e^{4\beta }-1\right)   e^{2 h} \left(e^{2 h}+1\right)^2 \left(e^{4 \beta }-e^{4 \beta +2 h}+e^{4 \beta +4 h}+3 e^{2 h}\right)}{\left(e^{4 \beta }+2 e^{2 h}+e^{4 h}\right) \left(e^{4 \beta +4 h}+2 e^{2 h}+1\right) \left(e^{4 \beta }+e^{4 \beta +2 h}+e^{4 \beta +4 h}+e^{2 h}\right)}
\]
\[
\mu\bigl((\{1\},\{2\},\{3\})\bigr) =\frac{\left(e^{2 h}+1\right)^2 \left(e^{4 \beta }-e^{4 \beta +2 h}+e^{4 \beta +4 h}+3 e^{2 h}\right)^2}{\left(e^{4 \beta }+2 e^{2 h}+e^{4 h}\right) \left(e^{4 \beta +4 h}+2 e^{2 h}+1\right) \left(e^{4 \beta }+e^{4 \beta +2 h}+e^{4 \beta +4 h}+e^{2 h}\right)}.
\]
Since \( e^{4 \beta +4 h} \geq e^{4 \beta +2 h} \), it is immediately clear that for all \( \pi \in \mathcal{B}_3 \), \( \beta > 0 \) and \( h > 0 \),  \( \mu(\pi) \) is non-negative, and hence (i) holds.

By letting \( h \to 0 \) in the expression for \( \mu\bigl((\{1\},\{2\},\{3\})\bigr) \), while keeping \( \beta \) fixed, one obtains 
\[
 \lim_{h \to 0 } \mu\bigl((\{1\},\{2\},\{3\})\bigr)  = \frac{4}{1+3e^{4\beta}}.
 \]
On the other hand, it is easy to check that  
 \[
\mu_{K_3,e^{-2\beta}}\bigl((\{1\},\{2\},\{3\})\bigr) = 
 \frac{4 e^{-2 \beta }}{3+e^{4 \beta }}.
 \]
Since these two expressions are not equal for any \( \beta > 0 \), (ii) holds.
\end{proof}

\begin{proof}[Proof of Proposition~\ref{theorem: Ising on K4}]
Fix  \( h > 0 \) and let \( p  \coloneqq  \nu_{K_4,\beta,h} (1^{\{ 1 \}}) \).
Since \( \nu_{K_4,\beta,h} \) is permutation invariant, it follows that if   \( \nu_{K_4, \beta,h} \) has a color representation, then it has at least one  color representation which is invariant under the action of \( S_4 \). It is easy to check that there are exactly five different partitions in \( \mathcal{B}_4 /S_4 \), namely \( (\{1,2,3,4\}) \), \( ( \{1,2,3\},\{4\} ) \), \( (\{1,2\},\{3,4\}) \), \( (\{1\},\{2\},\{3,4\}) \) and \( (\{1\},\{2\},\{3\},\{4\}) \).
By Theorem~1.5(i) in~\cite{fs2019}, the linear subspace spanned by all \( S_4 \)-invariant formal solutions \( \mu \) of \( A_{4,p} \mu = \nu_{K_4,\beta,h} \) has dimension one. 
By linearity, this implies in particular that if \( \nu_{K_4, \beta,h} \) has a  color representation, then it has at least one color representation which is \( S_4 \)-invariant and gives zero mass to at least one of the partitions in \( \mathcal{B}_4 /S_4 \). This gives us one unique solution for each choice of partition in \( \mathcal{B}_4 /S_4 \).

Solving the corresponding linear equation systems in Mathematica and studying the solutions, after some work, one obtains the desired necessary and sufficient condition.

\end{proof}

 \begin{remark}\label{remark: K5}
With the same strategy as in the proof of Proposition~\ref{theorem: Ising on K4}, one can find analogous necessary and sufficient conditions for \( G = K_n \) when \( n\geq 5 \). 
However, already when \( n = 5 \) the analogue inequality is quite complicated; 
in this case, one can show that a necessary and sufficient condition to have a color representation is that
\[
\begin{split}
-x^{18}  y^{10} &-x^{18} y^8+x^{18} y^7-x^{18} y^6-x^{18} y^4+x^{16} y^{14}+x^{16} y^{12}+x^{16} y^9
\\&+3 x^{16} y^8+2 x^{16} y^7+3 x^{16} y^6+x^{16} y^5+x^{16} y^2+x^{16}-3 x^{14} y^{11}
\\& -x^{14} y^{10}-12 x^{14} y^9+6 x^{14} y^8-11 x^{14} y^7+6 x^{14} y^6-12 x^{14} y^5-x^{14} y^4
\\&-3 x^{14} y^3+9 x^{12} y^{13}-3 x^{12} y^{12}+6 x^{12} y^{11}+7 x^{12} y^{10}+15 x^{12} y^9-4 x^{12} y^8
\\&+18 x^{12} y^7-4 x^{12} y^6+15 x^{12} y^5+7 x^{12} y^4+6 x^{12} y^3-3 x^{12} y^2+9 x^{12} y
\\&+19 x^{10} y^{12}+27 x^{10} y^{11}+14 x^{10} y^{10}+36 x^{10} y^9-21 x^{10} y^8+45 x^{10} y^7-21 x^{10} y^6
\\&+36 x^{10} y^5+14 x^{10} y^4+27 x^{10} y^3+19 x^{10} y^2+20 x^{8} y^{12}-18 x^{8} y^{11}+15 x^{8} y^{10}
\\&-11 x^{8} y^9+50 x^{8} y^8-102 x^{8} y^7+50 x^{8} y^6-11 x^{8} y^5+15 x^{8} y^4-18 x^{8} y^3
\\&+20 x^{8} y^2+82 x^{6} y^{11}+83 x^{6} y^{10}+76 x^{6} y^9+178 x^{6} y^8+197 x^{6} y^7+178 x^{6} y^6
\\&+76 x^{6} y^5+83 x^{6} y^4+82 x^{6} y^3+54 x^4 y^{10}+226 x^4 y^9+152 x^4 y^8+386 x^4 y^7
\\&+152 x^4 y^6+226 x^4 y^5+54 x^4 y^4-84 x^2 y^9+12 x^2 y^8-156 x^2 y^7+12 x^2 y^6
\\&-84 x^2 y^5-72 y^8-56 y^7-72 y^6
\geq 0
\end{split}
\]
where \( x = e^{2\beta} \) and \( y = e^{2h} \).


\end{remark}

\section{Color representations for small \texorpdfstring{\( h>0 \)}{h>0}}\label{section: small h}

\subsection{Existence}
The goal of this subsection is to provide a proof of Theorem~\ref{theorem: no cr when critical or supercritical}. A main tool in the proof of this theorem will be the following result from~\cite{fs2019}.

\begin{theorem}[Theorem 1.7 in~\cite{fs2019}]\label{theorem: lim sol I}
Let \( n \in  \mathbb{N} \) and let \( (\nu_p)_{p \in (0,1)} \) be a family of probability measures on \( \{ 0,1\}^n \). 
For each \( p \in (0,1 ) \), assume that \( \nu_p \) has marginals   \(  p\delta_1 + (1-p) \delta_0 \),  and that for each \( S \subseteq [n] \), \( \nu_p(1^S) \) is two times differentiable in \( p \) at \( p = 1/2 \). Assume further that for any \( S \subseteq T \subseteq [n] \) and any \( p \in (0,1) \), we have that
\[
\nu_p (0^S1^{T \backslash S}) = \nu_{1-p} (0^{T \backslash S} 1^S).
\]
Then the set of solutions \( \{ (\mu(\pi))_{\pi \in \mathcal{B}_n} \} \) to~\eqref{eq: main system} which arise as subsequential limits of solutions to~\eqref{eq: linear system} as \( p \to 1/2 \) is exactly the set of solutions to the system of equations
\begin{equation}\label{eq: limiting system}
\begin{cases}
\sum_{\pi \in \mathcal{B}_n}  2^{-\| \pi\| } \mu(\pi) = \nu_{1/2}(1^S), & S \subseteq [n],\, |S| \text{ even}  \cr
\sum_{\pi \in \mathcal{B}_n} \| \pi \|_S 2^{-\| \pi|_S\|+1 } \mu(\pi) = \nu'_{1/2}(1^S), & S \subseteq [n],\, |S| \text{ odd.} 
\end{cases}
\end{equation}

\end{theorem}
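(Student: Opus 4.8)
The plan is to recast \eqref{eq: linear system} in a reduced form and then prove the two inclusions separately: that every subsequential limit solves \eqref{eq: limiting system}, and conversely that every solution of \eqref{eq: limiting system} is realized as such a limit. For the reduction I sum \eqref{eq: linear system} over all \( \sigma \) with \( \sigma|_S \equiv 1 \); exactly the computation turning \( A \) into \( A' \) in the proof of Theorem~\ref{theorem: color representations with no external field} gives \( \sum_{\sigma \colon \sigma|_S \equiv 1} A_{n,p}(\sigma,\pi) = p^{\| \pi|_S\|} \), so that a signed measure \( \mu_p \) solves \eqref{eq: linear system} if and only if
\begin{equation*}
\sum_{\pi \in \mathcal{B}_n} p^{\| \pi|_S \|}\, \mu_p(\pi) = \nu_p(1^S), \qquad S \subseteq [n]. \tag{R}
\end{equation*}
Writing \( F_S(p) \coloneqq \nu_p(1^S) \), I record two consequences of the hypotheses: at \( p = 1/2 \) the symmetry assumption makes \( \nu_{1/2} \) invariant under interchanging \( 0 \) and \( 1 \); and for general \( p \) that same assumption says precisely that \( \nu_{1-p} \) is the color-flip of \( \nu_p \), so \( \Phi_p(\mu_p) = \nu_p \) forces \( \Phi_{1-p}(\mu_p) = \nu_{1-p} \). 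Hence the single family \( \mu_p \) satisfies (R) both as written and with \( p \) replaced by \( 1-p \) and \( F_S(p) \) by \( F_S(1-p) \).

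For necessity, let \( p_j \to 1/2 \) with \( \mu_{p_j} \to \mu \). Passing to the limit directly in (R) yields the value equations \( \sum_\pi 2^{-\| \pi|_S\|}\mu(\pi) = \nu_{1/2}(1^S) \) for every \( S \), in particular for \( |S| \) even. The derivative equations are the delicate ones, because \( \mu_p \) is only known to converge, not to be differentiable. Here the \( p \leftrightarrow 1-p \) symmetry is exactly what is needed: since \( \mu_{p_j} \) satisfies (R) at both \( p_j \) and \( 1 - p_j \), subtracting the two identities and dividing by \( 2p_j - 1 \) gives
\[
\sum_{\pi} \frac{p_j^{\| \pi|_S\|} - (1-p_j)^{\| \pi|_S\|}}{2p_j-1}\, \mu_{p_j}(\pi) = \frac{F_S(p_j) - F_S(1-p_j)}{2p_j - 1}.
\]
As \( j \to \infty \) the coefficient of \( \mu_{p_j}(\pi) \) tends to \( \| \pi|_S\|\, 2^{1-\| \pi|_S\|} \) and the right-hand side, being a central difference quotient of \( F_S \) at \( 1/2 \), tends to \( F_S'(1/2) = \nu'_{1/2}(1^S) \). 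This produces the derivative equations for every \( S \), in particular for \( |S| \) odd; the key point is that the symmetry converts the inaccessible derivative of \( \mu_p \) into a difference quotient of the known functions \( F_S \). Thus every subsequential limit solves \eqref{eq: limiting system}.

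For sufficiency, suppose \( \mu^* \) solves \eqref{eq: limiting system}; I must construct formal solutions converging to it. Expand (R) in \( s \coloneqq p - 1/2 \), writing \( M_p(S,\pi) \coloneqq p^{\| \pi|_S\|} = M^{(0)} + s M^{(1)} + O(s^2) \) with \( M^{(0)}(S,\pi) = 2^{-\| \pi|_S\|} \) and \( M^{(1)}(S,\pi) = \| \pi|_S\|\, 2^{1-\| \pi|_S\|} \); twice differentiability of \( F_S \) guarantees a genuine \( O(s^2) \) remainder. At order \( s^0 \) the requirement is the full set of value equations for \( \mu^* \): the even ones hold by hypothesis, and the odd ones follow because \( \Phi_{1/2}(\mu^*) \) and \( \nu_{1/2} \) are both flip-invariant and agree on all even \( 1^S \), which forces agreement on all \( 1^S \) by induction on \( |S| \). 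At order \( s^1 \) the requirement \( M^{(0)} \eta = (\nu'_{1/2}(1^S))_S - M^{(1)} \mu^* \) is solvable in \( \eta \) exactly when its right-hand side lies in the column space of \( M^{(0)} \), and this solvability condition is precisely the odd-\( S \) derivative equations, again part of the hypothesis. With such an \( \eta \), the measure \( \tilde\mu_s \coloneqq \mu^* + s\eta \) solves (R) up to a residual of size \( O(s^2) \); since a formal solution to \eqref{eq: linear system} exists for each such \( p \) (its marginals are the prescribed ones), the vector \( (\nu_p(1^S))_S \) lies in the column space of \( M_p \), hence so does the residual, and it can be cancelled by a correction \( \delta_s \) with \( M_p \delta_s = \text{residual} \), giving an exact formal solution \( \mu_{1/2+s} \coloneqq \tilde\mu_s + \delta_s \).

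The hard part is to show \( \delta_s \to 0 \), and this is where I expect the main difficulty. The matrices \( M_p \) are obtained from \( A_{n,p} \) by a fixed invertible row transformation, so they share its rank behaviour: \( 2^n - n \) for \( p \neq 1/2 \) but \( 2^{n-1} \) at \( p = 1/2 \). Thus exactly \( 2^{n-1} - n \) singular values degenerate at \( 1/2 \), and \( \delta_s \) can be controlled only if they vanish at the linear rate \( \asymp |p - 1/2| \) rather than faster. Granting this, the residual --- which lives along these degenerating directions and has norm \( O(s^2) \) --- has a preimage \( \delta_s \) of norm \( O(s^2)/O(s) = O(s) \to 0 \), so \( \mu_{1/2+s} \to \mu^* \) and \( \mu^* \) is indeed a subsequential (in fact full) limit. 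Proving this linear vanishing rate --- equivalently, that \( \tfrac{d}{dp} A_{n,p}\big|_{1/2} \) restores precisely the rank lost at \( 1/2 \), which is the same first-order phenomenon that produced the derivative equations in the necessity argument --- is the technical heart of the proof; I would isolate it as a lemma about the analytic matrix family \( p \mapsto A_{n,p} \), whose locally constant rank off \( 1/2 \) also makes \( \mu_p = A_{n,p}^+ \nu_p \) vary analytically there.
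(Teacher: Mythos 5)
The paper does not prove this statement itself --- it is imported verbatim as Theorem~1.7 of~\cite{fs2019} --- so there is no in-paper argument to compare against, and I assess your proposal on its own terms. Your reduction to the system $\sum_{\pi} p^{\|\pi|_S\|}\mu_p(\pi)=\nu_p(1^S)$ is correct, and your necessity direction is essentially complete: the observation that the flip symmetry forces the same $\mu_p$ to solve the system at both $p$ and $1-p$, so that the inaccessible derivative of $p\mapsto\mu_p$ can be replaced by a symmetric difference quotient of the known functions $F_S$, is the right mechanism, and since all sums are finite you may pass to the limit termwise.

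The sufficiency direction, however, contains a genuine gap which you yourself flag: the assertion that every singular value of $M_p$ (equivalently of $A_{n,p}$) that degenerates as $p\to 1/2$ does so at rate exactly $\asymp|p-1/2|$, never faster. Everything after your first-order step hinges on this. The correction $\delta_s$ solving $M_p\delta_s=O(s^2)$ is only guaranteed to satisfy $\|\delta_s\|=O(s^2)/\sigma(p)$, where $\sigma(p)$ is the smallest nonzero singular value of $M_p$; if even one of the $2^{n-1}-n$ simultaneously degenerating singular values vanished like $s^2$ or faster, $\delta_s$ need not tend to zero and the constructed family need not converge to $\mu^*$. Because so many singular values degenerate at once, this is not a soft perturbation fact; it requires an explicit analysis of how $\frac{d}{dp}M_p\big|_{p=1/2}$ acts between the kernel and cokernel of $M_{1/2}$, and you do not supply it --- ``granting this'' is precisely where the proof is missing. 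A secondary imprecision: the solvability condition for $M^{(0)}\eta=F'(1/2)-M^{(1)}\mu^*$ is the vanishing of the alternating sums $\sum_{S'\subseteq S}(-2)^{|S'|}b_{S'}$ over odd $|S|$, which involves the \emph{even}-$S'$ components of the discrepancy $b$, not only the odd ones; to close this one must further note that the flip symmetry gives both $F'(1/2)$ and $M^{(1)}\mu^*$ the relations $\sum_{T\subseteq S}(-1)^{|T|}v_T=-v_S$, which determine the even-$S$ components from the odd ones (and in fact force $b=0$). That is fixable, but as written the claim that the solvability condition ``is precisely the odd-$S$ derivative equations'' is not justified.
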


\begin{remark}\label{remark: better form of equations system}
	By applying elementary row operations to the system of linear equations in~\eqref{eq: limiting system}, one obtains the following equivalent system of linear equations.
\begin{equation}\label{eq: alternative form}
	\begin{split}
&\sum_{\pi \in \mathcal{B}_n}      \mathbb{1} (\pi|_S\text{ has at most one odd sized partition element}) 
 \,      \mu(\pi)
\\[-1ex]&\qquad =
\begin{cases}
 \sum_{S' \subseteq S}  (-2)^{|S'|+1} \nu_{1/2}'(1^{S'})  & \text{if } |S| \text{ is odd} \cr
 \sum_{S' \subseteq S}  (-2)^{ |S'|} \nu_{1/2}(1^{S'}) & \text{if } |S| \text{ is even.}
\end{cases}
\end{split}
\end{equation}
(See the last equation in the proof of Theorem 1.7 in~\cite{fs2019}.)
\end{remark}

The proof of Theorem~\ref{theorem: no cr when critical or supercritical} will be divided into two parts, corresponding to the two cases \( \hat \beta > 1 \) and \( \hat \beta = 1 \).

\begin{proof}[Proof of Theorem~\ref{theorem: no cr when critical or supercritical} when \( \hat \beta > 1 \) (the supercritical regime).]


%
Let \( z_{\hat \beta} \) be the unique positive root to the equation \( z = \tanh ({\hat \beta} z) \). Then it is well known (see e.g.~\cite{e1985}, p.\ 181) that as \( n \to \infty \), we have
\[
 \frac{ 2\| X^{K_n, \beta,0} \|-n}{n} \overset{d}{\to} \frac{1}{2} \delta_{-z_{\hat \beta}} + \frac{1}{2} \delta_{z_{\hat \beta}} 
 \]
 and hence, by Lemma~\ref{lemma: right hand side components}, as \( n \to \infty \) we have that
\begin{equation}\label{eq: expression limits}
\begin{cases}
\sum_{\sigma \in \{ 0,1 \}^n} \chi_\emptyset(\sigma)  \, \nu_{K_n, \beta,0}(\sigma) = 1 \cr
\sum_{\sigma\in \{ 0,1 \}^n} (2\| \sigma \|-n) \chi_{\{ 1 \}}(\sigma)\, \nu_{K_n, \beta,0}(\sigma) \sim  n z_{\hat \beta}^2 \cr
\sum_{\sigma\in \{ 0,1 \}^n} \chi_{[2]}(\sigma) \, \nu_{K_n, \beta,0}(\sigma)\sim z_{\hat \beta}^2 \cr
\sum_{\sigma\in \{ 0,1 \}^n} (2\| \sigma \|-n) \chi_{[3]}(\sigma) \, \nu_{K_n, \beta,0}(\sigma)\sim n z_{\hat \beta}^4 \cr
\sum_{\sigma\in \{ 0,1 \}^n} \chi_{[4]}(\sigma) \, \nu_{K_n, \beta,0}(\sigma)  \sim z_{\hat \beta}^4.
\end{cases}
\end{equation}

By Remark~\ref{remark: better form of equations system}, any color representation \( \mu \) of \( \nu_{K_n,\beta,0} \) which arise as a subsequential limit of color representations of \( \nu_{K_n,\beta,h} \) as \( h \to 0 \) must satisfy~\eqref{eq: alternative form}.
By Lemma~\ref{lemma: transitive}, for any \( S \subseteq [n] \) we have that 
\[ \sum_{S' \subseteq S} (-2)^{|S|} \nu_{K_n,\beta,0}(1^{S'}) = (-1)^{|S|}  \sum_{\sigma \in \{ 0,1 \}^n}   \chi_{S}(\sigma) \, \nu_{K_n,\beta,0}(\sigma) 
\]
and
\[ \sum_{S' \subseteq S} (-2)^{|S|+1} \nu_{K_n,\beta,0}'(1^{S'}) = (-1)^{|S|+1}   \,  \frac{ \sum_{\sigma \in \{ 0,1 \}^n} (2\| \sigma \|-n) \chi_{S}(\sigma)  \,   \nu_{K_n,\beta,0}(\sigma)}{\sum_{\sigma \in \{ 0,1 \}^n} (2\| \sigma \|-n) \chi_{\{ 1 \}}(\sigma)  \,   \nu_{K_n,\beta,0}(\sigma)}
\]
and hence~\eqref{eq: alternative form} is equivalent to that
\begin{equation}\label{eq: alternative form ii}
	\begin{split}
&\sum_{\pi \in \mathcal{B}_n}      \mathbb{1}(\pi|_S\text{ has at most one odd sized partition element}) 
 \,      \mu(\pi)
\\[-1ex]&\qquad =
\lambda_S^{(n)} \coloneqq 
\begin{cases}
\frac{ \sum_{\sigma \in \{ 0,1 \}^n} (2\| \sigma \|-n) \chi_{S}(\sigma)   \,   \nu_{K_n,\beta,0}(\sigma)}{\sum_{\sigma \in \{ 0,1 \}^n} (2\| \sigma \|-n) \chi_{\{ 1 \}}(\sigma)  \,   \nu_{K_n,\beta,0}(\sigma)} & \text{if } |S| \text{ is odd} \cr
\sum_{\sigma \in \{ 0,1 \}^n}  \chi_{S}(\sigma)  \, \nu_{K_n,\beta,0}(\sigma) & \text{if } |S| \text{ is even.}
\end{cases}
\end{split}
\end{equation}

Note that since \( \nu_{K_n,\beta,0} \) is permutation invariant, we have \( \lambda_S^{(n)} = \lambda^{(n)}_{[|S|]} \) for any \( S \subseteq [n] \). This implies in particular that if~\eqref{eq: alternative form ii} has a non-negative solution, then it must have at least one permutation invariant non-negative solution \( \mu^{(n)} \).  Observe that \( \mu^{(n)}|_{\mathcal{B}_4} \) satisfies~\eqref{eq: alternative form ii} for any set \( S \subseteq \{ 1,2,3,4 \} \).
For this reason, for each \( n \geq 4 \), we now consider the system of linear equations given by
\begin{equation}\label{eq: alternative form iii} 
\begin{cases}
\sum_{\pi \in \mathcal{B}_4}      \mathbb{1}(\pi|_S\text{ has at most one odd sized partition element}) 
 \,      \mu(\pi)
 =
\lambda_S^{(n)}  \cr
\mu(\pi) = \mu(\tau \circ\pi) \text{ for all } \tau \in S_n
\end{cases},\quad S \subseteq [4].
\end{equation}
Let \( A^{(4)} \) denote the corresponding matrix. By Theorem 1.5(i) in~\cite{fs2019}, the null space of \( A^{(4)} \) has dimension one.
Since~\eqref{eq: alternative form iii} is a set of linear equations, this implies that if a non-negative solution exists, then there will  exist a non-negative solution \( \mu \) in which either \( \mu\bigl((\{1,2,3,4\})\bigr) \), \( \mu\bigl((\{1,2,3\},\{4\})\bigr)\), \( \mu\bigl((\{1,2\},\{3,4\})\bigr)\), \( \mu\bigl((\{1,2\},\{3\},\{4\} )\bigr) \) or \( \mu\bigl((\{1\},\{2\},\{3\},\{4\})\bigr)\) is equal to zero. 
Next, note that when  \( S \subseteq [n] \) satisfies  \( |S| \leq 4 \), by~\eqref{eq: expression limits}, we have that \( \lambda^{(\infty)}_S \coloneqq \lim_{n \to \infty} \lambda^{(n)}_S = z_{\hat \beta}^{2 \lfloor |S|/2 \rfloor} \). 
Define \( \lambda^{(n)} = (\lambda^{(n)}_{S})_{S \subseteq [4]} \) and \( \lambda^{(\infty)} = (\lambda^{(\infty)}_{S})_{S \subseteq [4]} \).
Using these definitions,  one verifies that the five (permutation invariant) solutions \( \mu_1^{(\infty)}\), \( \mu_2^{(\infty)}\), \( \mu_3^{(\infty)}\), \( \mu_4^{(\infty)}\) and \( \mu_5^{(\infty)}\) to
\begin{equation} \label{eq: large n system}
	A^{(4)} \mu  = \lambda^{(\infty)}
\end{equation} 
with at least one zero entry are given by
\begin{align*}
&\mu_1^{(\infty)}\bigl({(\{1,2,3,4\}),\, (\{1\},\{2,3,4\}),\, (\{1,2\},\{3,4\}),\, (\{1\},\{2\},\{3,4\}),\, (\{1\},\{2\},\{3\},\{4\})}\bigr)
\\&\qquad = 
\bigl(0,z_{\hat \beta}^2,\, z_{\hat \beta}^4/3,\, \textcolor{red}{-z_{\hat \beta}^2(1+z_{\hat \beta}^2/3)},\, (1+z_{\hat \beta}^2)^2\bigr)  
\end{align*}
\begin{align*}
&\mu_2^{(\infty)}\bigl({(\{1,2,3,4\}),\, (\{1\},\{2,3,4\}),\, (\{1,2\},\{3,4\}),\, (\{1\},\{2\},\{3,4\}),\, (\{1\},\{2\},\{3\},\{4\})}\bigr)
\\&\qquad = 
\bigl(z_{\hat \beta}^2,\, 0,\, z_{\hat \beta}^2(z_{\hat \beta}^2-1)/3,\,  \textcolor{red}{-z_{\hat \beta}^2(z_{\hat \beta}^2-1)/3},\,  (z_{\hat \beta}^2-1)^2\bigr) 
\end{align*}

\begin{align*}
&\mu_3^{(\infty)}\bigl({(\{1,2,3,4\}),\, (\{1\},\{2,3,4\}),\, (\{1,2\},\{3,4\}),\, (\{1\},\{2\},\{3,4\}),\, (\{1\},\{2\},\{3\},\{4\})}\bigr)
\\&\qquad =
 \bigl(z_{\hat \beta}^4,\, \textcolor{red}{-z_{\hat \beta}^2(z_{\hat \beta}^2-1)},\, 0,\, z_{\hat \beta}^2(z_{\hat \beta}^2-1),\textcolor{red}{-(z_{\hat \beta}^2-1)(1+3z_{\hat \beta}^2)}\bigr) 
\end{align*}

\begin{align*}
&\mu_4^{(\infty)}\bigl({(\{1,2,3,4\}),\, (\{1\},\{2,3,4\}),\, (\{1,2\},\{3,4\}),\, (\{1\},\{2\},\{3,4\}),\, (\{1\},\{2\},\{3\},\{4\})}\bigr)
\\&\qquad = 
\bigl(z_{\hat \beta}^2 (3+z_{\hat \beta}^2)/4, \, \textcolor{red}{-z_{\hat \beta}^2(z_{\hat \beta}^2-1)/4},\, z_{\hat \beta}^2(z_{\hat \beta}^2-1)/4,\, 0,\, \textcolor{red}{-(z_{\hat \beta}^2-1)}\bigr)  
\end{align*}
and
\begin{align*}
&\mu_5^{(\infty)}\bigl({(\{1,2,3,4\}),\, (\{1\},\{2,3,4\}),\, (\{1,2\},\{3,4\}),\, (\{1\},\{2\},\{3,4\}),\, (\{1\},\{2\},\{3\},\{4\})}\bigr)
\\&\qquad = 
\bigl((1+z_{\hat \beta}^2)^2/4,\, \textcolor{red}{-(z_{\hat \beta}^2-1)^2/4},\, (z_{\hat \beta}^2-1)(1+3z_{\hat \beta}^2)/12,\, \textcolor{red}{-(z_{\hat \beta}^2-1)/3},\, 0\bigr). 
\end{align*}
Above, all the negative entries are displayed in red. Since all of these solutions have at least one strictly negative entry, it follows that any solutions \( \mu^{(\infty)}\) to \( A^{(4)} \mu^{(\infty)} = \lambda^{(\infty)}\) must have at least one strictly negative entry.

Now let \( \mu^{(n)} \) be a permutation invariant solution to~\eqref{eq: alternative form ii}. Then   \( \mu^{(n)}|_{\mathcal{B}_4}\)  must satisfy \( A^{(4)}  \mu^{(n)}|_{\mathcal{B}_4} = \lambda^{(n)} \).
 But this implies that for all \( S \subseteq [4] \) we have that
\begin{align*}
&\lim_{n \to \infty} ( A^{(4)}  \mu^{(n)}|_{\mathcal{B}_4} - A^{(4)}  \mu^{(\infty)})(S) 
= \lim_{n \to \infty} \lambda^{(n)}(S) - \lambda^{(\infty)}(S) = 0.
\end{align*}
Since
\begin{align*}
& \lim_{n \to \infty} ( A^{(4)}  \mu^{(n )}|_{\mathcal{B}_4} - A^{(4)}  \mu^{(\infty)})(S)  
 =
 A^{(4)} ( \lim_{n \to \infty}  \mu^{(n )}|_{\mathcal{B}_4} -  \mu^{(\infty)})(S)  
\end{align*}
this show that
\[
 \lim_{n \to \infty} \mu^{(n )}|_{\mathcal{B}_4} -  \mu^{(\infty )} \in \Null A^{(4)}.
\]
This in particular implies that \( \mu^{(n)} \) must have negative entries for all sufficiently large \(n \), and hence the desired conclusion follows.
\end{proof}

We now give a proof of Theorem~\ref{theorem: no cr when critical or supercritical} in the case \( \hat \beta = 1 \). This proof is very similar to the previous proof, but requires that we look at the distribution of the first five coordinates of \( X_{K_n,\beta,0}\) and are more careful with the asymptotics.

\begin{proof}[Proof of Theorem~\ref{theorem: no cr when critical or supercritical} when \( \hat \beta =1\) (the critical regime).]
Assume that \( n  \) is large.
By Theorem V.9.5 in~\cite{e1985}, as \( n \to \infty \), we have that
\[
 \frac{2\| X^{K_n,\beta,0}\|-n}{n^{3/4}} \overset{d}{\to} X,
 \]
 where \( X \) is a random variable with probability density function \( f(x) = \frac{3^{1/4} \Gamma(1/4)}{\sqrt{2}} e^{-x^4/12},\)  \( x \in \mathbb{R} \).

 Let \( m_1^{(n)}\), \( m_2^{(n)}\), etc. be the moments of \( X^{K_n,\beta,0} \). By Lemma~\ref{lemma: right hand side components}, as \( n \to \infty \), we have that
 \begin{align*}
 \begin{cases}
\sum_{\sigma \in \{ 0,1 \}^n}  \chi_\emptyset (\sigma)  \, \nu_{K_n,\beta,0}(\sigma) = 1 \cr
\sum_{\sigma \in \{ 0,1 \}^n} (2\| \sigma \|-n)  \chi_{\{1\}}(\sigma) \, \nu_{K_n,\beta,0}(\sigma) \sim n^{1/2} m_2^{(n)}\cr
\sum_{\sigma \in \{ 0,1 \}^n}  \chi_{[2]}(\sigma)  \, \nu_{K_n,\beta,0}(\sigma) \sim   n^{-1/2}  m_2^{(n)}   \cr
\sum_{\sigma \in \{ 0,1 \}^n} (2\| \sigma \|-n)  \chi_{[3]}(\sigma)  \, \nu_{K_n,\beta,0}(\sigma) \sim  m_4^{(n)} \cr
\sum_{\sigma \in \{ 0,1 \}^n} \chi_{[4]}(\sigma) \, \nu_{K_n,\beta,0}(\sigma)  \sim  n^{-1} m_4^{(n)}\cr
\sum_{\sigma \in \{ 0,1 \}^n} (2\| \sigma \|-n)  \chi_{[5]}(\sigma)  \, \nu_{K_n,\beta,0}(\sigma) \sim n^{-1/2}   m_6^{(n)} .
\end{cases}
\end{align*}
For each \( S \subseteq [n] \), let \( \lambda^{(n)}_S \) be defined by~\eqref{eq: alternative form ii} and define  \( \lambda^{(n)} = (\lambda^{(n)}_S)_{S \subseteq [n]}\). Further, let \( \hat \lambda^{(n)} \) be the vector containing only the largest order term of each entry of \( \lambda^{(n)}\), i.e.\ let
\begin{equation*}
	\begin{cases}
		\hat\lambda^{(n)}(\emptyset) = 1 \cr
		\hat\lambda^{(n)}([1]) = 1 \cr
		\hat\lambda^{(n)}([2]) = n^{-1/2}m_2^{(n)} \cr
		\hat\lambda^{(n)}([3]) = n^{-1/2} m_4^{(n)}/m_2^{(n)} \cr
		\hat\lambda^{(n)}([4]) = n^{-1}m_4^{(n)} \cr
		\hat\lambda^{(n)}([5]) = n^{-1}m_6^{(n)}/m_2^{(n)}. \cr
	\end{cases}
\end{equation*}
 Then we have that \( \hat \lambda^{(n)}(S) - \lambda^{(n)}(S) = o(n^{-1/2}) \) for all \( S \subseteq [n] \).

We now proceed as in the supercritical case.
By Remark~\ref{remark: better form of equations system}, any color representation \( \mu \) of \( \nu_{K_n,\beta,0} \) which arise as a subsequential limit of color representations of \( \nu_{K_n,\beta,h} \) as \( h \to 0 \) must satisfy~\eqref{eq: alternative form}.
By Lemma~\ref{lemma: transitive}, for any \( S \subseteq [n] \) we have that 
\[ \sum_{S' \subseteq S} (-2)^{|S|} \nu_{K_n,\beta,0}(1^{S'}) = (-1)^{|S|}  \sum_{\sigma \in \{ 0,1 \}^n}   \chi_{S}(\sigma) \, \nu_{K_n,\beta,0}(\sigma) 
\]
and
\[ \sum_{S' \subseteq S} (-2)^{|S|+1} \nu_{K_n,\beta,0}'(1^{S'}) = (-1)^{|S|+1}   \,  \frac{ \sum_{\sigma \in \{ 0,1 \}^n} (2\| \sigma \|-n) \chi_{S}(\sigma)  \,   \nu_{K_n,\beta,0}(\sigma)}{\sum_{\sigma \in \{ 0,1 \}^n} (2\| \sigma \|-n) \chi_{\{ 1 \}}(\sigma)  \,   \nu_{K_n,\beta,0}(\sigma)}
\]
and hence~\eqref{eq: alternative form} is equivalent to that
\begin{equation}\label{eq: alternative form ii}
	\begin{split}
&\sum_{\pi \in \mathcal{B}_n}      \mathbb{1}(\pi|_S\text{ has at most one odd sized partition element}) 
 \,      \mu(\pi)
\\[-1ex]&\qquad =
\lambda_S^{(n)} \coloneqq 
\begin{cases}
\frac{ \sum_{\sigma \in \{ 0,1 \}^n} (2\| \sigma \|-n) \chi_{S}(\sigma)   \,   \nu_{K_n,\beta,0}(\sigma)}{\sum_{\sigma \in \{ 0,1 \}^n} (2\| \sigma \|-n) \chi_{\{ 1 \}}(\sigma)  \,   \nu_{K_n,\beta,0}(\sigma)} & \text{if } |S| \text{ is odd} \cr
\sum_{\sigma \in \{ 0,1 \}^n}  \chi_{S}(\sigma)  \, \nu_{K_n,\beta,0}(\sigma) & \text{if } |S| \text{ is even.}
\end{cases}
\end{split}
\end{equation}
Note that  \( \nu_{K_n,\beta,0} \) is permutation invariant, and hence \( \lambda_S^{(n)} = \lambda^{(n)}_{[|S|]} \) for any \( S \subseteq [n] \). This implies in particular that if~\eqref{eq: alternative form ii} has a non-negative solution, then it must have at least one permutation invariant non-negative solution \( \mu^{(n)} \).  
Observe that \( \mu^{(n)}|_{\mathcal{B}_5} \) satisfies~\eqref{eq: alternative form ii} for any set \( S \subseteq \{ 1,2,3,4, 5 \} \).
For this reason, for each \( n \geq 5 \), we now consider the system of linear equations given by
\begin{equation}\label{eq: alternative form iiii} 
\begin{cases}
\sum_{\pi \in \mathcal{B}_5}      \mathbb{1}(\pi|_S\text{ has at most one odd sized partition element}) 
 \,      \mu(\pi)
 =
\lambda_S^{(n)}  \cr
\mu(\pi) = \mu(\tau \circ\pi) \text{ for all } \tau \in S_n
\end{cases},\quad S \subseteq [5].
\end{equation}
Let \( A^{(5)} \) denote the corresponding matrix and define 
\begin{equation*}
\begin{split}
	&\Pi_5 \coloneqq \Bigl\{ \bigl( \{1,2,3,4,5\} \bigr),\,
	 \bigl(\{1,2,3,4\},\{5\} \bigr),\,
	  \bigl( \{1,2,3\},\{4,5\} \bigr),\,
	   \bigl(\{1,2,3\},\{4\},\{5\} \bigr), \,
	\\&\qquad\qquad \bigl(\{1,2\},\{3,4\},\{5\}\bigr),\,
	 \bigl(\{1,2\},\{3\},\{4\},\{5\}\bigr),\, \bigl(\{1\},\{2\},\{3\},\{4\},\{5\}\bigr) \Bigr\} \subseteq \mathcal{B}_5.
\end{split}	
\end{equation*}
By Theorem 2.2(i) in~\cite{fs2019}, the null space of  \( A^{(5)} \) has dimension two. This implies that if a  non-negative solution \( \mu \) to~\eqref{eq: alternative form iiii} exists, then there will exist two district partitions \( \pi',\pi'' \in \Pi_5 \) and a non-negative solution \( \mu \) to~\eqref{eq: alternative form iiii} such that at \( \mu(\pi') = \mu(\pi'') = 0 \).

 Let \( \mu_1^{(n)}, \mu_2^{(n)} , \ldots, \mu^{(n)}_{\binom{7}{2}} \in RER_{[5]}^*\) be the solutions of~\eqref{eq: alternative form iiii} which are such that at least two of \( \mu\bigl((\{1,2,3,4,5\})\bigr)\), \( \mu\bigl((\{1,2,3,4\},\{5\})\bigr)\), \( \mu\bigl((\{1,2,3\},\{4,5\})\bigr)\), \( \mu\bigl((\{1,2,3\},\{4\},\{5\})\bigr)\), \( \mu\bigl((\{1,2\},\{3,4\},\{5\})\bigr)\), \( \mu\bigl((\{1,2\},\{3\},\{4\},\{5\})\bigr)\) and \\ \( \mu\bigl((\{1\},\{2\},\{3\},\{4\},\{5\})\bigr)\) are equal to zero, and let \( \hat \mu_1^{(n)}, \hat \mu_2^{(n)} , \ldots, \hat \mu^{(n)}_{\binom{7}{2}} \in RER_{[5]}^*\) be the corresponding solutions if we replace \( \lambda^{(n)} \) with \( \hat \lambda^{(n)} \) in~\eqref{eq: alternative form iiii}.
Then one verifies that there is an absolute constant \( C > 0 \) such that for all \( n \geq 5 \) and any  \( i \in \{ 1,2, \ldots, \binom{7}{2} \} \) there is \( \pi \in \mathcal{B}_5 \) such that \( \hat \mu_i^{(n)}(\pi) < -Cn^{-1/2} + o(n^{-1/2}) \).
 Next, note that for any \( i \in \{ 1,2, \ldots, \binom{7}{2} \} \) and \( S \subseteq [5]\), we have
\[
A^{(5)} \mu_i^{(n)}(S) - A^{(5)} \hat \mu_i^{(n)}(S) = \lambda^{(n)}(S) - \hat \lambda^{(n)}(S) = o(n^{-1/2}),
\]
Since \( A^{(5)} \) does not depend on \( n \), it follows that \( \mu_i(S) - \hat \mu_i(S) = o(n^{-1/2}) \) for all \( S \subseteq [5] \),  implying in particular that 
\( \mu_i^{(n)} \) has the same sign as the corresponding entry of \( \hat \mu_i \) at at least one entry which is negative in \( \hat \mu_i \). This gives the desired conclusion.

\end{proof}

\subsection{The random cluster model is almost never a limiting color representation}

In this section, we give a proof of Theorem~\ref{theorem: rcm limit}. %
A main tool in the proof of this result will be the next lemma. The main idea of the proof of Theorem~\ref{theorem: rcm limit} will  be to show that the necessary condition given by this lemma does in fact not hold for the random cluster model.


Throughout this section, we will use the following notation.
For a fixed \( n \in  \mathbb{N} \), a set \( S \subseteq [n] \) and a partition \( \pi \in \mathcal{B}_n \), define
\[
A(S,\pi) \coloneqq  \mathbb{1}(\pi|_S\text{ has at most one odd sized partition element}).
\]
If \( |S| \) is odd and  \( A(S,\pi) = 1 \),   let \( T_{S,\pi} \) denote the unique odd sized partition element of \( \pi|_S \).

\begin{lemma}\label{lemma: necessary condition}

Let \( n \in  \mathbb{N} \) and let  \( G  \) be a graph with \( n \) vertices. Let \( \beta > 0 \) and assume  that \( \mu \in \Phi_{1/2}^{-1}(\nu_{G, \beta,0})  \)   arises as a subsequential limit of color representations of \( X^{G, \beta,h} \) as \( h \to 0 \). Finally, let \( S \subseteq [n] \) be such that \( |S| \) is odd. Then
\begin{equation}\label{eq: first version of equality}
n  \biggl[ \, \sum_{\pi \in \mathcal{B}_n} A(S,\pi) |  T_{S,\pi}| \, \mu(\pi) \biggr]
=
 \biggl[\,  \sum_{i \in [n]} \sum_{\pi \in \mathcal{B}_n}  |  T_{\{ i \},\pi}| \, \mu(\pi) \biggr] 
 \cdot 
 \biggl[ \, \sum_{\pi \in \mathcal{B}_n}     A(S,\pi) \,  \mu(\pi) \biggr].
\end{equation}
\end{lemma}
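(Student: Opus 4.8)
The plan is to express each of the three bracketed quantities as a correlation function of the zero-field measure $\nu_{G,\beta,0}$, and to recover the second bracket $N_S:=\sum_\pi A(S,\pi)\mu(\pi)$ as a limit of positive-field correlations. The starting point I would record is the color-representation identity at a general parameter $p$: conditioning on the partition, $\E_p[\chi_T(X)\mid\pi]=(2p-1)^{o_T(\pi)}$, where $o_T(\pi)$ is the number of odd-sized blocks of $\pi|_T$ (because $\E_p[(2c_b-1)^k]$ equals $1$ for even $k$ and $2p-1$ for odd $k$). Hence $\sum_\sigma\chi_S(\sigma)\nu_{G,\beta,h}(\sigma)=\sum_\pi(2p_{\beta,h}-1)^{o_S(\pi)}\mu^{(h)}(\pi)$ for any color representation $\mu^{(h)}$ of $\nu_{G,\beta,h}$. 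For odd $|S|$ one has $o_S(\pi)\ge1$, so dividing by $2p_{\beta,h}-1$ and letting $h\to0$ along the approximating sequence (so $p_{\beta,h}\to1/2$ and $\mu^{(h)}\to\mu$) collapses the right-hand side to $N_S$; thus $N_S=\lim_{h\to0}\bigl(2p_{\beta,h}-1\bigr)^{-1}\sum_\sigma\chi_S(\sigma)\nu_{G,\beta,h}(\sigma)$.

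Next I would evaluate this limit. Both numerator and denominator vanish at $h=0$ (the numerator by the $0\leftrightarrow1$ symmetry of $\nu_{G,\beta,0}$ together with $|S|$ odd, the denominator because $p_{\beta,0}=1/2$), so the limit equals the ratio of their $h$-derivatives at $0$. Using $\tfrac{d}{dh}\nu_{G,\beta,h}(\sigma)\big|_{0}=(2\|\sigma\|-n)\nu_{G,\beta,0}(\sigma)$ (valid since $\E_0[2\|X\|-n]=0$), the numerator derivative is $\E_0[(2\|X\|-n)\chi_S(X)]$, while the denominator derivative is $2p'(0)=\E_0[(2X_1-1)(2\|X\|-n)]$. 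Because color representations exist along the sequence, all one-dimensional marginals of $\nu_{G,\beta,h}$ must coincide, so $\tfrac{d}{dh}\nu_{G,\beta,h}(1^{\{i\}})\big|_0$ is independent of $i$; averaging over $i$ and using $\sum_i(2X_i-1)=2\|X\|-n$ turns the denominator derivative into $\tfrac1n\E_0[(2\|X\|-n)^2]$, which is positive so that $p'(0)\ne0$. This gives $N_S=n\,\E_0[(2\|X\|-n)\chi_S(X)]\big/\E_0[(2\|X\|-n)^2]$.

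Finally I would identify the two Ising correlations with the remaining brackets. At $p=1/2$ one has $\E_0[(2X_i-1)(2X_j-1)]=\mu(i,j\text{ in one block})$, whence $\E_0[(2\|X\|-n)^2]=\sum_{i,j}\mu(i\sim j)=\E_\mu\bigl[\sum_b|b|^2\bigr]=\sum_{i}\sum_\pi|T_{\{i\},\pi}|\,\mu(\pi)$, the first bracket (here and below $|T_{\cdot,\pi}|$ is the size of the $\pi$-block meeting the relevant set in its unique odd part). For the numerator, writing $2\|X\|-n=\sum_j(2X_j-1)$, using $(2X_j-1)\chi_S=\chi_{S\triangle\{j\}}$ and $\E_0[\chi_T(X)]=\sum_\pi\mathbb{1}(\pi|_T\text{ has only even blocks})\mu(\pi)$ (the $p=1/2$ case of the identity above, i.e.\ the matrix $A''$ from the proof of Theorem~\ref{theorem: color representations with no external field}) reduces $\E_0[(2\|X\|-n)\chi_S(X)]$ to $\sum_\pi\mu(\pi)\sum_j\mathbb{1}(\pi|_{S\triangle\{j\}}\text{ has only even blocks})$. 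The combinatorial heart is the termwise identity $\sum_{j\in[n]}\mathbb{1}(\pi|_{S\triangle\{j\}}\text{ has only even blocks})=A(S,\pi)\,|T_{S,\pi}|$: when $A(S,\pi)=1$ the admissible $j$ are exactly those lying in the $\pi$-block that contains the odd part of $\pi|_S$ (deleting one inside $S$, or adjoining one outside $S$, is what makes every restricted block even), giving $|T_{S,\pi}|$ of them, and when $A(S,\pi)=0$ there are none. This yields $\E_0[(2\|X\|-n)\chi_S(X)]=\sum_\pi A(S,\pi)|T_{S,\pi}|\,\mu(\pi)$, and substituting both identifications into the displayed ratio produces exactly \eqref{eq: first version of equality}.

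I expect the main obstacle to be this last combinatorial identity, and in particular the bookkeeping that forces $|T_{S,\pi}|$ to be the cardinality of the \emph{full} $\pi$-block, which is precisely what makes the first bracket equal $\E_\mu[\sum_b|b|^2]$ rather than the trivial count $n$. A secondary technical point is the passage to the limit together with the justification that $p'(0)\ne0$ and that all marginals agree along the sequence for a graph $G$ that need not be vertex-transitive; this equal-marginal property is exactly what the existence of color representations near $h=0$ supplies.
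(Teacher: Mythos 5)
Your proof is correct and follows essentially the same route as the paper: both arguments reduce the claim to the identity $\E_0[(2\|X\|-n)\chi_S(X)]=\sum_\pi A(S,\pi)\,|T_{S,\pi}|\,\mu(\pi)$ (via $(2\sigma_j-1)\chi_S(\sigma)=\chi_{S\triangle\{j\}}(\sigma)$ together with the block-counting identity $\sum_{j\in[n]} A(S\triangle\{j\},\pi)=A(S,\pi)\,|T_{S,\pi}|$) and then rearrange the odd-$|S|$ equation of the limiting linear system. The only substantive difference is that where the paper imports that system as a black box from Theorem~1.7 of~\cite{fs2019} (via Remark~\ref{remark: better form of equations system} and Lemma~\ref{lemma: transitive}), you rederive the needed rows directly from the conditional-expectation formula $\E_p[\chi_T(X)\mid\pi]=(2p-1)^{o_T(\pi)}$ and a L'H\^{o}pital argument at $h=0$; you also correctly read $|T_{S,\pi}|$ as the size of the full $\pi$-block containing the odd part of $\pi|_S$, which is what the paper's proof and its later applications actually use.
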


\begin{proof}
Note first  that by combining Theorem~\ref{theorem: lim sol I}, Remark~\ref{remark: better form of equations system} and Lemma~\ref{lemma: transitive}, it follows that for any \( T \subseteq [n] \), we have
\begin{equation}\label{eq: the system in lemma proof}
\sum_{\pi \in \mathcal{B}_n}     A(T,\pi)
 \,  \mu(\pi)
=
\begin{cases}
 \frac{n \sum_{\sigma \in \{0,1 \}^n} (2\| \sigma \|-n) \chi_T(\sigma)  \,   \nu_{G,\beta,0}(\sigma)}{\sum_{i \in [n]} \sum_{\sigma \in \{0,1 \}^n} (2\| \sigma \|-n) (2\sigma_i-1) \,   \nu_{G,\beta,0}(\sigma)}  & \text{if } |T| \text{ is odd} \cr
 \sum_{\sigma \in \{0,1 \}^n}  \chi_T(\sigma) \, \nu_{G,\beta,0}(\sigma)  & \text{if } |T| \text{ is even.}
\end{cases}
\end{equation}
Next, note that for any such set \( T \subseteq [n] \), we have
\begin{align*}
& \sum_{\sigma \in \{ 0,1 \}^n} (2\| \sigma \|-n) \chi_T(\sigma)\,  \nu_{G,\beta,0}(\sigma) 
= 
\sum_{\sigma \in \{ 0,1 \}^n}  \chi_T(\sigma) \sum_{i \in [n]} (2\sigma_i-1)   \, \nu_{G,\beta,0}(\sigma)
  \\&\qquad = \sum_{i \in [n]}\sum_{\sigma \in \{ 0,1 \}^n}    \chi_{T \triangle \{ i \} }(\sigma) \, \nu_{G,\beta,0}(\sigma)  .
\end{align*}
Since \( |S| \) is odd, \( | S \triangle \{ i \} | \) is even for each  \( i \in [n] \) , and hence, by~\eqref{eq: the system in lemma proof}, we have
\begin{align*}
 &  \sum_{i \in [n]}    \sum_{\sigma \in \{ 0,1 \}^n}   \chi_{S \triangle \{ i \} }(\sigma) \, \nu_{G,\beta,0}(\sigma)  
 =
\sum_{i \in [n] }   \sum_{\pi \in \mathcal{B}_n} A(S \triangle \{ i \}, \pi) \mu(\pi)  
\end{align*}
If \( A(S,\pi) = 1 \) for some \( \pi \in \mathcal{B}_n \), then since \( |S| \) is odd, \( \pi \) has exactly one odd sized partition element, which is equal to the restriction of the partition element \( T_{S, \pi} \) of \( \pi \) to \( S \). Further, since \( |S \triangle \{ j \}| \) is even for each \( j \in [n] \), \( A(S \triangle \{ j \},\pi) = 1 \) if and only if \( j \in T_{S, \pi} \), and hence
\[
\sum_{\pi \in \mathcal{B}_n}   \sum_{j \in [n]}  A(S \triangle \{ j \}, \pi) \, \mu(\pi) =  \sum_{\pi \in \mathcal{B}_n} A(S,\pi) |  T_{S,\pi}| \, \mu(\pi)
\]
Combining the three previous inline equations, we obtain
\begin{equation}\label{eq: one side}
 \sum_{\sigma \in \{0,1\}^n} (2\| \sigma \|-n)  \chi_S(\sigma)   \,  \nu_{G,\beta,0}(\sigma) = \sum_{\pi \in \mathcal{B}_n} A(S,\pi) |  T_{S,\pi}| \, \mu(\pi).
\end{equation}
Plugging this into~\eqref{eq: the system in lemma proof}, and recalling that \( |S| \) is odd by assumption, we get
\begin{align*}
&\sum_{\pi \in \mathcal{B}_n}     A(S,\pi) \,  \mu(\pi) 
=  
\frac{n\sum_{\sigma \in \{0,1\}^n} (2 \| \sigma \|-n) \chi_S(\sigma) \,   \nu_{G,\beta,0}(\sigma)}{\sum_{i \in [n]} \sum_{\sigma \in \{ 0,1 \}^n} (2\| \sigma \|-n) (2\sigma_i-1) \,   \nu_{G,\beta,0}(\sigma)} 
\\&\qquad=
\frac{n\sum_{\pi \in \mathcal{B}_n} A(S,\pi) |  T_{S,\pi}| \, \mu(\pi)}{\sum_{i \in [n]} \sum_{\pi \in \mathcal{B}_n}  |  T_{\{ i \},\pi}| \, \mu(\pi)}.
 \end{align*}
Rearranging this equation, we obtain~\eqref{eq: first version of equality}, which is the desired conclusion.
\end{proof}

The purpose of the next lemma is to provide a simpler way to calculate the sums in~\eqref{eq: first version of equality} when \( \mu   \) is a random cluster model on a finite graph \( G \) by translating such sums into corresponding sums for Bernoulli percolation. 


\begin{lemma}\label{lemma: Bernoulli translation}
Let \( G = (V,E ) \) be a connected and vertex transitive graph. 
Fix some \( r \in (0,1) \) 
and define \(  \hat r =   \frac{r}{2-r}\). Further, let \( m \) be the length of the shortest cycle in \( G \). Then there is a constant \( Z''_{G,\hat r}\) such that for any function \( f \colon \mathcal{B}_n \to \mathbb{R} \), we have that
\begin{align*}
& \frac{Z'_{G, r} }{Z''_{G,\hat r}}\sum_{\pi \in \mathcal{B}_V}  f(\pi) \,  \mu_{G,r}(\pi)
 \\&\qquad = 
  \sum_{w \in \{0,1 \}^E} \hat{r}^{\| w \|} (1-\hat r)^{|E| - \| w \|}   \, f(\pi[w]) \, (1 + \mathbb{1}((V,E_w) \text{ contains a cycle})) 
  \\&\qquad\qquad  + O(\hat rx^{m+1}).
 \end{align*}
\end{lemma}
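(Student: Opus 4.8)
The plan is to expand the random-cluster weighted average into a sum over edge-configurations and then recognise the result, up to an overall constant and a reparametrisation, as a Bernoulli percolation average. First I would use the definition of \( \mu_{G,r} = \mu_{G,r,2} \) to write, for any \( f \),
\[
Z'_{G,r} \sum_{\pi \in \mathcal{B}_V} f(\pi)\, \mu_{G,r}(\pi)
= \sum_{w \in \{0,1\}^E} f(\pi[w])\, r^{\|w\|}(1-r)^{|E|-\|w\|}\, 2^{\|\pi[w]\|},
\]
since summing over \( \pi \) and then over \( w \) with \( \pi[w]=\pi \) amounts to summing over all \( w \in \{0,1\}^E \). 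The key algebraic input is Euler's formula for the subgraph \( (V,E_w) \): writing \( b(w) \coloneqq \|w\| - |V| + \|\pi[w]\| \) for its cyclomatic number (the number of independent cycles), one has \( b(w)\ge 0 \), with \( b(w)=0 \) exactly when \( (V,E_w) \) is a forest and \( b(w)\ge 1 \) exactly when \( (V,E_w) \) contains a cycle. Substituting \( \|\pi[w]\| = |V| - \|w\| + b(w) \) turns \( 2^{\|\pi[w]\|} \) into \( 2^{|V|}\, 2^{-\|w\|}\, 2^{b(w)} \).

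Next I would carry out the reparametrisation \( r \mapsto \hat r = r/(2-r) \). A direct computation gives \( \hat r/(1-\hat r) = r/(2(1-r)) \), so that
\[
r^{\|w\|}(1-r)^{|E|-\|w\|}\, 2^{-\|w\|}
= (1-r)^{|E|} \Bigl( \tfrac{\hat r}{1-\hat r}\Bigr)^{\|w\|}
= \frac{(1-r)^{|E|}}{(1-\hat r)^{|E|}}\, \hat r^{\|w\|}(1-\hat r)^{|E|-\|w\|}.
\]
Collecting the \( w \)-independent factors into a single constant \( Z''_{G,\hat r} \coloneqq 2^{|V|}(1-r)^{|E|}(1-\hat r)^{-|E|} \) (which, using \( r = 2\hat r/(1+\hat r) \), simplifies to \( 2^{|V|}(1+\hat r)^{-|E|} \) and so depends only on \( G \) and \( \hat r \)), I would obtain
\[
\frac{Z'_{G,r}}{Z''_{G,\hat r}} \sum_{\pi} f(\pi)\, \mu_{G,r}(\pi)
= \sum_{w \in \{0,1\}^E} f(\pi[w])\, \hat r^{\|w\|}(1-\hat r)^{|E|-\|w\|}\, 2^{b(w)}.
\]

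To finish I would split \( 2^{b(w)} = \bigl(1 + \mathbb{1}(b(w)\ge 1)\bigr) + \bigl(2^{b(w)} - 2\bigr)\mathbb{1}(b(w) \ge 2) \); the first bracket is exactly \( 1 + \mathbb{1}((V,E_w)\text{ contains a cycle}) \) and reproduces the stated main term, while the second bracket contributes only configurations with at least two independent cycles. The main point, and the one genuinely combinatorial step, is to show that \( b(w)\ge 2 \) forces \( \|w\| \ge m+1 \). For this I would argue by connected components: if two distinct components each carry a cycle they use at least \( 2m \ge m+1 \) edges; and if a single component \( C \) has cyclomatic number at least \( 2 \) then \( |E(C)| \ge |V(C)|+1 \), while \( C \) contains a cycle of length at least the girth \( m \) and hence \( |V(C)| \ge m \), so \( |E(C)| \ge m+1 \).

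Given this edge-count bound, every term surviving in the remainder has \( \hat r^{\|w\|} \le \hat r^{m+1} \), while \( (1-\hat r)^{|E|-\|w\|} \le 1 \), \( 0 \le 2^{b(w)} - 2 \le 2^{|E|} \), and there are at most \( 2^{|E|} \) configurations; hence the remainder is bounded by \( 2^{2|E|}\,(\max_\pi |f(\pi)|)\, \hat r^{m+1} = O(\hat r^{m+1}) \) as \( \hat r \to 0 \), which is the claimed error term. The crux of the argument is thus precisely the girth-forced lower bound on \( \|w\| \) when \( b(w)\ge 2 \); everything else is routine bookkeeping with the partition function and the change of variable, so I expect no further obstacle.
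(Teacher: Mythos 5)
Your proposal is correct and follows essentially the same route as the paper: expand the random-cluster average over edge configurations, use $\|\pi[w]\| = |V| - \|w\| + b(w)$ to isolate the factor $q^{b(w)}$, reparametrise via $\hat r = r/(2-r)$, and absorb configurations with $b(w)\ge 2$ into the $O(\hat r^{m+1})$ error. Your explicit justification that $b(w)\ge 2$ forces $\|w\|\ge m+1$ (via the two-component versus single-component case split) is a detail the paper leaves implicit, and is correct.
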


\begin{proof}
 First recall the definition of the random cluster model corresponding to a parameter \( q \geq 0 \); 
\begin{equation*}
	\mu_{G,r,q}(\pi)=\frac{1}{Z'_{G,r,q}} \sum_{w \in \{ 0,1\}^E \colon\atop \pi[w]=\pi}r^{\| w \|}(1-r)^{|E|-\|w \|}  q^{\|\pi[w]\|}, \quad \pi \in \mathcal{B}_V.
\end{equation*}
where   \( Z'_{G,r,q}  \) is a normalizing constant ensuring that this is a probability measure. 
This implies in particular that for any \( \pi \in \mathcal{B}_V \), we have
 \begin{align*}
 &Z'_{G, r,q}  \,  \mu_{G,r,q}(\pi) 
 =
 \sum_{w \in \{ 0,1\}^E \colon\atop \pi[w]=\pi} r^{\| w \|}(1-r)^{|E|-\|w \|}  q^{\|\pi[w]\|}
 \\&\qquad =
 \sum_{w \in \{ 0,1\}^E \colon\atop \pi[w]=\pi} r^{\| w \|}(1-r)^{|E|-\|w \|}  q^{|V| - \| w \|} 
 \\[-4ex]&\qquad\qquad\qquad\qquad\quad \cdot (1 + (q-1)\mathbb{1}((V,E_w) \text{ contains a cycle}))+ O(r^{m+1})
 \\[2ex]&\qquad =
 (1-r)^{|E|}q^{|V|}\sum_{w \in \{ 0,1\}^E \colon\atop \pi[w]=\pi} \,  \biggl( \frac{r}{q(1-r)} \biggr)^{\| w \|}    
  \\[-2ex]&\qquad\qquad\qquad\qquad\qquad\qquad\qquad\quad 
  \cdot (1 + (q-1)\mathbb{1}\bigl((V,E_w) \text{ contains a cycle})\bigr)+ O(r^{m+1}).
 \end{align*}
 If we define \( \hat r = \frac{r}{r + q(1-r)} \), then we obtain
  \begin{align*}
 &Z'_{G, r,q}  \,  \mu_{G,r,q}(\pi)  
 \\&\qquad =
 (1-r)^{|E|}q^{|V|}\sum_{w \in \{ 0,1\}^E \colon\atop \pi[w]=\pi} \,  \biggl( \frac{\hat r}{1 - \hat r} \biggr)^{\| w \|}    
  \\[-2ex]&\qquad\qquad\qquad\qquad\qquad\qquad\qquad\quad 
  \cdot (1 + (q-1)\mathbb{1}((V,E_w) \text{ contains a cycle}))+ O(r^{m+1}). 
 \\&\qquad =
 \frac{(1-r)^{|E|}q^{|V|}}{(1-\hat r)^{|E|}}\sum_{w \in \{ 0,1\}^E \colon\atop \pi[w]=\pi} \,  \hat r^{\| w \|}(1-\hat r)^{|E| - \| w \|}    
  \\[-3ex]&\qquad\qquad\qquad\qquad\qquad\qquad\qquad\quad \cdot (1 + (q-1)\mathbb{1}((V,E_w) \text{ contains a cycle}))+ O(r^{m+1}).
 \end{align*}
In particular, this implies that for ant function \( f \colon \mathcal{B}_n \to \mathbb{R} \), we have
  \begin{align*}
  &Z_{G, r,q}' \sum_{\pi \in \mathbb{B}_V}  f(\pi) \,  \mu_{G,r,q}(\pi) 
 \\&\qquad =
 \frac{(1-r)^{|E|}q^{|V|}}{(1-\hat r)^{|E|}}\sum_{w \in \{ 0,1\}^E  } f(\pi[w])\,  \hat r^{\| w \|}(1-\hat r)^{|E| - \| w \|}  
 \\[-2ex]&\qquad\qquad\qquad\qquad\qquad\qquad\qquad\quad   \cdot (1 + (q-1)\mathbb{1}((V,E_w) \text{ contains a cycle}))+ O(r^{m+1}).
 \end{align*}
  If we let \( q = 2 \) the desired conclusion now follows.  
\end{proof}
 
We now use the previous lemma, Lemma~\ref{lemma: Bernoulli translation}, to give a version of Lemma~\ref{lemma: necessary condition} for Bernoulli percolation. To this end, for \(  r \in (0,1) \) and \( q \geq 0 \),  let \( \hat \mu_{G, r,q}\) be the measure on \( \{ 0,1 \}^E \) defined by
\begin{equation*}
    \hat \mu_{G,\hat r,0}(w)  \coloneqq
    \hat r^{\| w \|} (1 - \hat r)^{|E|- \| w \|}  q^{\| \pi[w] \|}
    , \quad w \in \{ 0,1 \}^E.
\end{equation*}{}
We then have the following lemma.

\begin{lemma}\label{lemma: Bernoulli version}
Let \( n \in  \mathbb{N} \) and let \( G = (V,E) \) be a connected graph on  \( n \) vertices. Let \( m \) be the length of the shortest cycle in \( G \). Further, let \( S \subseteq V \) satisfy \( |S| = 3 \) and let \( \Delta_S^{(m)} \) be the number of length \( m \) cycles in \( G \) which contain all the vertices in \( S \). 
Assume that \( \beta > 0 \) is such that \(  \mu_{G,1-e^{-2\beta}}  \) is a subsequential limit of color representations of \( \nu_{G, \beta,h} \)  as \( h \to 0 \).  Set  \(  \hat {r} =   \frac{r}{2-r}\). Then 
 \begin{equation}\label{eq: Bernoulli version}
 \begin{split}	
& n  
\Bigl[ \, \sum_{w \in \{ 0,1 \}^E} A(S,\pi[w])  \bigl( |  T_{S,\pi[w]}|-1 \bigr)  \,  \hat \mu_{G,\hat r,0}(w)\Bigr]
 +
n \Bigl[ \Delta_S^{(m)} \,  \hat r^m \cdot (m-1) \Bigr]
\\& \qquad =
 \Bigl[ \, \sum_{i \in [n]} \sum_{w \in \{ 0,1 \}^E}  \bigl( |  T_{\{ i \},\pi[w]}|-1 \bigr) \,  \hat \mu_{G,\hat r,0}(w)\Bigr] 
 \cdot 
 \Bigl[ \sum_{w \in \{ 0,1 \}^E}     A(S,\pi[w]) \,   \hat \mu_{G,\hat r,0}(w)\Bigr]
 \\&\qquad\qquad + O(\hat {r}^{m+1}).
\end{split}
\end{equation}
\end{lemma}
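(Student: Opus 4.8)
The plan is to start from the exact identity~\eqref{eq: first version of equality} of Lemma~\ref{lemma: necessary condition}, specialized to \( \mu = \mu_{G,1-e^{-2\beta}} \) and to a set \( S \) with \( |S| = 3 \), and to rewrite each of its three sums as a sum against Bernoulli percolation using Lemma~\ref{lemma: Bernoulli translation}. Writing \( r = 1-e^{-2\beta} \), \( \hat r = r/(2-r) \) and \( C = Z'_{G,r}/Z''_{G,\hat r} \), Lemma~\ref{lemma: Bernoulli translation} turns each sum \( \sum_\pi f(\pi)\mu_{G,r}(\pi) \) into \( C^{-1}\sum_w \hat r^{\|w\|}(1-\hat r)^{|E|-\|w\|} f(\pi[w])\bigl(1 + \mathbb{1}((V,E_w)\text{ contains a cycle})\bigr) + O(\hat r^{m+1}) \). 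Applying this with \( f \equiv 1 \) shows \( C = 1 + O(\hat r^m) \), since a cycle needs at least \( m \) open edges. Substituting the three translated sums into~\eqref{eq: first version of equality} and using that the sum on the left of~\eqref{eq: first version of equality} and the factor \( \sum_\pi A(S,\pi)\mu(\pi) \) are both \( O(\hat r) \) while \( \sum_i\sum_\pi |T_{\{i\},\pi}|\mu(\pi) \) is \( O(1) \), the factors of \( C \) reduce to \( 1 \) up to \( O(\hat r^{m+1}) \), leaving an identity purely among Bernoulli sums weighted by \( (1 + \mathbb{1}(\text{cycle})) \).

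Next I would separate the plain Bernoulli part from the cycle correction in each sum, writing each weighted sum as a plain Bernoulli expectation plus a cycle correction of order \( \hat r^m \). The shifts \( |T_{S,\pi[w]}| \to |T_{S,\pi[w]}| - 1 \) and \( |T_{\{i\},\pi[w]}| \to |T_{\{i\},\pi[w]}| - 1 \) appearing in~\eqref{eq: Bernoulli version} are produced algebraically: using \( A(S,\pi)|T_{S,\pi}| = A(S,\pi)(|T_{S,\pi}|-1) + A(S,\pi) \) and \( \sum_i |T_{\{i\},\pi}| = \sum_i(|T_{\{i\},\pi}|-1) + n \), the undifferenced pieces generate two copies of \( n\sum_w A(S,\pi[w])\hat\mu_{G,\hat r,0}(w) \) on the two sides of the identity, which cancel. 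Cross terms between two cycle corrections, and between a cycle correction and an \( O(\hat r) \) plain factor, are \( O(\hat r^{m+1}) \) and are discarded.

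What remains at order \( \hat r^m \) is the combination \( n P_{\mathrm{cyc}} - \bar Q\, R_{\mathrm{cyc}} \), where \( P_{\mathrm{cyc}} \) and \( R_{\mathrm{cyc}} \) are the cycle corrections of the first and third sums and \( \bar Q = n + O(\hat r) \) is the leading value of the middle factor. To evaluate these I would restrict to the configurations \( w \) carrying a single shortest cycle \( \gamma \) of length \( m \) and no other edges, which contribute \( \hat r^m(1+o(1)) \) each and exhaust the order-\( \hat r^m \) behaviour. Classifying the \( m \)-cycles \( \gamma \) by \( |\gamma \cap S| \in \{0,1,2,3\} \): a cycle containing all three vertices of \( S \) contributes \( A(S,\pi)|T_{S,\pi}| = m \) to \( P_{\mathrm{cyc}} \) and \( A(S,\pi) = 1 \) to \( R_{\mathrm{cyc}} \), while a cycle containing exactly two contributes \( 1 \) and \( 1 \) respectively (the odd part of \( \pi|_S \) is then the isolated third vertex, whose cluster is a singleton). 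The exactly-two contributions, counted by the same quantity on both sides, cancel in \( n P_{\mathrm{cyc}} - \bar Q\, R_{\mathrm{cyc}} \), and the all-three contributions leave \( n\hat r^m(m \cdot \Delta_S^{(m)}) - n\hat r^m(1\cdot \Delta_S^{(m)}) = n(m-1)\Delta_S^{(m)}\hat r^m \), which is exactly the extra term in~\eqref{eq: Bernoulli version}.

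I expect the main obstacle to be this last order-\( \hat r^m \) bookkeeping: one must verify that single-cycle configurations genuinely capture the entire \( \hat r^m \) contribution (every other cycle-containing configuration costs at least \( \hat r^{m+1} \)), correctly identify \( |T_{S,\pi[w]}| \) and \( |T_{\{i\},\pi[w]}| \) on such configurations, and check that the contributions of cycles meeting \( S \) in exactly two vertices cancel precisely, so that only \( \Delta_S^{(m)} \) survives with coefficient \( (m-1) \). Keeping track of the normalization constant \( C \) and confirming that every cross term and every higher-cycle correction is absorbed into \( O(\hat r^{m+1}) \) is the part requiring the most care.
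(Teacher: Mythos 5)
Your proposal is correct and follows essentially the same route as the paper: start from the identity of Lemma~\ref{lemma: necessary condition} applied to \( \mu_{G,r} \), translate each sum into a Bernoulli sum via Lemma~\ref{lemma: Bernoulli translation}, cancel the common \( n\sum_w A(S,\pi[w])\hat\mu \) term produced by the shift \( |T|\mapsto|T|-1 \), and extract the surviving order-\( \hat r^m \) contribution from single-\( m \)-cycle configurations, which yields exactly \( n\Delta_S^{(m)}(m-1)\hat r^m \). The only cosmetic difference is that the paper removes the normalizing constants by multiplying both sides by \( (Z'_{G,r})^2 \) so that the \( Z'' \) factors cancel, whereas you estimate \( Z'/Z'' = 1 + O(\hat r^m) \) directly and absorb the discrepancy using that the relevant factors are \( O(\hat r) \); both are valid, and your order-\( \hat r^m \) bookkeeping is in fact spelled out in more detail than the paper's.
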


\begin{proof}
Note first that by Lemma~\ref{lemma: necessary condition}, we have that
\begin{align*}
&n \cdot \Bigl[ \sum_{\pi \in \mathcal{B}_n} A(S,\pi) |  T_{S,\pi}| \,   \mu_{G,r}(\pi) \Bigr]
\\&\qquad =
 \Bigl[ \sum_{i \in [n]} \sum_{\pi \in \mathcal{B}_n}  |  T_{\{ i \},\pi}| \, \mu_{G,r}(\pi) \Bigr] 
 \cdot 
 \Bigl[ \sum_{\pi \in \mathcal{B}_n}     A(S,\pi) \,   \mu_{G,r}(\pi) \Bigr]
\end{align*}
or equivalently,
\begin{align*}
&\Bigl[ \sum_{w \in \{0,1 \}^n}  n \cdot  \hat \mu_{G,r,2}(w) \Bigr]
\cdot 
\Bigl[ \sum_{w \in \{ 0 ,1 \}^n} A(S,\pi[w])  \bigl( |  T_{S,\pi[w]}|-1 \bigr) \, \hat \mu_{G,r,2}(w) \Bigr]
\\&\qquad =
 \Bigl[ \sum_{i \in [n]} \sum_{w \in \{0,1 \}^n}  \bigl( |  T_{\{ i \},\pi[w]}|-1 \bigr) \, \hat \mu_{G,r,2}(w) \Bigr] 
 \cdot 
 \Bigl[ \sum_{w \in \{0,1 \}^n}     A(S,\pi[w]) \, \hat  \mu_{G,r,2}(w) \Bigr].
\end{align*}

By Lemma~\ref{lemma: Bernoulli translation}, this implies that 
 \begin{align*}
&\Bigl[ \sum_{w \in \{ 0,1 \}^E}  n (1 + \mathbb{1}(\pi[w] \text{ contains a cycle})) \cdot  \hat \mu_{G,\hat r,0}(w) \Bigr]
\\&\qquad \qquad \cdot 
\Bigl[ \sum_{w \in \{ 0,1 \}^E}  A(S,\pi[w])  \bigl( |  T_{S,\pi[w]}|-1 \bigr) (1 + \mathbb{1}(\pi[w] \text{ contains a cycle}))  \,  \hat \mu_{G,\hat r,0}(w)\Bigr]
\\& \qquad =
 \Bigl[ \sum_{i \in [n]} \sum_{w \in \{ 0,1 \}^E}  \bigl( |  T_{\{ i \},\pi[w]}|-1 \bigr) (1 + \mathbb{1}(\pi[w] \text{ contains a cycle})) \,  \hat \mu_{G, \hat r,0}(w)\Bigr] 
 \\&\qquad\qquad\cdot 
 \Bigl[ \sum_{w \in \{ 0,1 \}^E}     A(S,\pi[w])(1 + \mathbb{1}(\pi[w]  \text{ contains a cycle}))  \,   \hat \mu_{G,\hat r,0}(w)\Bigr] + O(\hat r^{m+1}).
\end{align*}
Noting that, by assumption, one needs at least \( m \) edges to make a cycle in \( G \), it follows that the previous equation is equivalent to that
 \begin{align*}
& n  
\Bigl[ \sum_{w \in \{ 0,1 \}^E}  A(S,\pi[w])  \bigl( |  T_{S,\pi[w]}|-1 \bigr)  \,  \hat \mu_{G,\hat r,0}(w)\Bigr]
 +
n  \Bigl[ \Delta_S^{(m)} \,  \hat {r }^m \cdot (m-1) \Bigr]
\\& \qquad =
 \Bigl[ \sum_{i \in [n]} \sum_{w \in \{ 0,1 \}^E}  \bigl( |  T_{\{ i \},\pi[w]}|-1 \bigr) \,  \hat \mu_{G,\hat r,0}(w)\Bigr] 
 \cdot 
 \Bigl[ \sum_{w \in \{ 0,1 \}^E}     A(S,\pi[w]) \,   \hat \mu_{G,\hat r,0}(w)\Bigr]
 \\&\qquad\qquad + O(\hat {r}^{m+1})
\end{align*}
which is the desired conclusion.
\end{proof}

We are now ready to give a proof of Theorem~\ref{theorem: rcm limit}. 

\begin{proof}[Proof of Theorem~\ref{theorem: rcm limit}]
Fix any \( S \subseteq V \) with \( |S| = 3 \).
Note that with the notation of Lemma~\ref{lemma: Bernoulli version}, we have that \( m = 3 \) and that \( \Delta_S^{(3)} = 1 \). 
Moreover, one easily verifies that
\begin{align*}
\sum_{w \in \{ 0,1 \}^E}    A(S,\pi[w]) \,   \hat \mu_{G,\hat r,0}(w) = 
(1 - (1 - \hat r)^3) + (1 - \hat r)^3 \cdot 3(n-3) \hat r^2 + O(\hat r^{3}).
\end{align*}
and that
\begin{align*}
 \sum_{i \in [n]} \sum_{w \in \{ 0,1 \}^E}  \bigl( |  T_{\{ i \},\pi[w]}|-1 \bigr) \,  \hat \mu_{G,\hat r,0}(w)
&= (2-1) \cdot (n-1) \, \hat r ( 1 - \hat{r})^{2(n-2)}
\\&\qquad + (3-1) \cdot \binom{n-1}{2} \binom{3}{1} \hat r^2 
+ O(\hat r^{3}).
\end{align*}

Moreover, using Table~\ref{table: patterns o},  one sees that
\begin{equation}\label{eq: eq with table}
	\begin{split}
\sum_{w \in \{ 0,1 \}^E} A(S,\pi[w])  \bigl( |  T_{S,\pi[w]}|-1 \bigr)  \,  \hat \mu_{G,\hat r,0}(w)
=
3(n-1) \hat r^2 + 
2(7-12 n + 3n^2)   \hat r^3 + O(\hat r^4)
	\end{split}
\end{equation}

\begin{table}[H]
\centering
\hspace{-2.6em} \begin{tabular}{ p{1em} ccc }\cmidrule[0.3mm]{2-4}
 & Pattern &  \( |  T_{S,\pi[w]}| \) & Probability up to \(  O(\hat r^{4}) \)\\ \cmidrule{2-4}
\footnotesize (i) & 
 \adjustbox{valign=c}{\begin{tikzpicture}[scale=0.6]
 
\draw[dotted]  (0,0) -- (1,0) -- (0.5,0.8) -- (0,0);
\draw[] (1,0) -- (1.5,0.8) ;
\draw[] (0,0) -- (-0.5, 0.8) -- (0.5,0.8);

\fill[white] (0,0) circle (3mm);\fill[] (0,0) circle (0.9mm); 
\fill[white] (1,0) circle (3mm);\fill[] (1,0) circle (0.9mm); 
\fill[white] (0.5,0.8) circle (3mm);\fill[] (0.5,0.8) circle (0.9mm); 
\fill[white] (1.5,0.8) circle (0mm);\fill[] (1.5,0.8) circle (0.5mm); 
\fill[white] (-0.5,0.8) circle (0mm);\fill[] (-0.5,0.8) circle (0.5mm); 
\end{tikzpicture}}
 & \( 2 \) &  \( 3(n-3)(n-4) \hat{r}^3\) \\ 
\footnotesize (ii) & 
 \adjustbox{valign=c}{\begin{tikzpicture}[scale=0.6]
 
\draw[dotted]  (0,0) -- (1,0) -- (0.5,0.8) -- (0,0);
\draw[] (0,0) -- (0.5,0.35) -- (1,0);
\draw[] (0.5,0.35) -- (0.5,0.8);

\fill[white] (0,0) circle (3mm);\fill[] (0,0) circle (0.9mm); 
\fill[white] (1,0) circle (3mm);\fill[] (1,0) circle (0.9mm); 
\fill[white] (0.5,0.8) circle (3mm);\fill[] (0.5,0.8) circle (0.9mm); 
\fill[] (0.5,0.35) circle (0.5mm); 
\end{tikzpicture}}
 & \( 4 \) &  \( (n-3) \hat r^3\) \\ 
\footnotesize (iii) & 
 \adjustbox{valign=c}{\begin{tikzpicture}[scale=0.6]
 
\draw[dotted]  (0,0) -- (1,0) -- (0.5,0.8) -- (0,0);
\draw[]  (0,0) -- (1,0); 

\draw[] (0.5,0.8) -- (1.5,0.8);

\fill[white] (0,0) circle (3mm);\fill[] (0,0) circle (0.9mm); 
\fill[white] (1,0) circle (3mm);\fill[] (1,0) circle (0.9mm); 
\fill[white] (0.5,0.8) circle (3mm);\fill[] (0.5,0.8) circle (0.9mm); 
\fill[white] (1.5,0.8) circle (0mm);\fill[] (1.5,0.8) circle (0.5mm); 
\end{tikzpicture}}
 & \( 2 \) &  \( 3 (n-3) \hat{r}^2  (1-\hat{r})^{2(n-2)}\) \\ 
\footnotesize (iv) & 
 \adjustbox{valign=c}{\begin{tikzpicture}[scale=0.6]
 
\draw[dotted]  (0,0) -- (1,0) -- (0.5,0.8) -- (0,0);
\draw[]  (0,0) -- (1,0); 
\draw[] (0.5,0.8) -- (1.5,0.8) -- (2,0);

\fill[white] (0,0) circle (3mm);\fill[] (0,0) circle (0.9mm); 
\fill[white] (1,0) circle (3mm);\fill[] (1,0) circle (0.9mm); 
\fill[white] (2,0) circle (0mm);\fill[] (2,0) circle (0.5mm); 
\fill[white] (0.5,0.8) circle (3mm);\fill[] (0.5,0.8) circle (0.9mm); 
\fill[white] (1.5,0.8) circle (0mm);\fill[] (1.5,0.8) circle (0.5mm); 
\end{tikzpicture}}
 & \( 3 \) &  \( 3(n-3)(n-4) \,  \hat{r}^3\) \\ 
\footnotesize (v) & 
 \adjustbox{valign=c}{\begin{tikzpicture}[scale=0.6]
 
\draw[dotted]  (0,0) -- (1,0) -- (0.5,0.8) -- (0,0);
\draw[]  (0,0) -- (1,0); 
\draw[] (-0.5,0.8) -- (0.5,0.8) -- (1.5,0.8) ;

\fill[white] (-0.5,0.8) circle (0mm);\fill[] (-0.5,0.8) circle (0.5mm); 
\fill[white] (0,0) circle (3mm);\fill[] (0,0) circle (0.9mm); 
\fill[white] (1,0) circle (3mm);\fill[] (1,0) circle (0.9mm); 
\fill[white] (0.5,0.8) circle (3mm);\fill[] (0.5,0.8) circle (0.9mm); 
\fill[white] (1.5,0.8) circle (0mm);\fill[] (1.5,0.8) circle (0.5mm); 
\end{tikzpicture}}
 & \( 3 \) &  \( 3\binom{n-3}{2} \hat{r}^3 \) \\ 
\footnotesize (vi) & 
 \adjustbox{valign=c}{\begin{tikzpicture}[scale=0.6]
 
\draw[dotted]  (0,0) -- (1,0) -- (0.5,0.8) -- (0,0);
\draw[]  (0,0) -- (1,0); 
\draw[] (0.5,0.8) -- (1.5,0.8) -- (1,0);

\fill[white] (0,0) circle (3mm);\fill[] (0,0) circle (0.9mm); 
\fill[white] (1,0) circle (3mm);\fill[] (1,0) circle (0.9mm); 
\fill[white] (0.5,0.8) circle (3mm);\fill[] (0.5,0.8) circle (0.9mm); 
\fill[white] (1.5,0.8) circle (0mm);\fill[] (1.5,0.8) circle (0.5mm); 
\end{tikzpicture}}
 & \( 4 \) &  \( 3\cdot 2(n-3) \hat r^3\) \\ 
\footnotesize (vii) & 
 \adjustbox{valign=c}{\begin{tikzpicture}[scale=0.6]
 
\draw[dotted]  (0,0) -- (1,0) -- (0.5,0.8) -- (0,0);
\draw[] (0.5,0.8) -- (0,0) -- (1,0);

\fill[white] (0,0) circle (3mm);\fill[] (0,0) circle (0.9mm); 
\fill[white] (1,0) circle (3mm);\fill[] (1,0) circle (0.9mm); 
\fill[white] (0.5,0.8) circle (3mm);\fill[] (0.5,0.8) circle (0.9mm); 
\end{tikzpicture}}
 & \( 3 \) &  \( 3 \hat r^2(1 - \hat {r})^{1 + 3(n-3)} \) \\ 
\footnotesize (viii) & 
 \adjustbox{valign=c}{\begin{tikzpicture}[scale=0.6]
 
\draw[dotted]  (0,0) -- (1,0) -- (0.5,0.8) -- (0,0);
\draw[] (0,0) -- (1,0) -- (0.5,0.8) -- (1.5,0.8);

\fill[white] (0,0) circle (3mm);\fill[] (0,0) circle (0.9mm); 
\fill[white] (1,0) circle (3mm);\fill[] (1,0) circle (0.9mm); 
\fill[white] (0.5,0.8) circle (3mm);\fill[] (0.5,0.8) circle (0.9mm); 
\fill[white] (1.5,0.8) circle (0mm);\fill[] (1.5,0.8) circle (0.5mm); 
\end{tikzpicture}}
 & \( 4 \) &  \( 3 \cdot 2 (n-3) \hat r^3 \) \\ 
\footnotesize (ix) & 
 \adjustbox{valign=c}{\begin{tikzpicture}[scale=0.6]
 
\draw[dotted]  (0,0) -- (1,0) -- (0.5,0.8) -- (0,0);
\draw[] (0.5,0.8) -- (1.5,0.8);
\draw[] (0,0) -- (0.5,0.8) -- (1,0);

\fill[white] (0,0) circle (3mm);\fill[] (0,0) circle (0.9mm); 
\fill[white] (1,0) circle (3mm);\fill[] (1,0) circle (0.9mm); 
\fill[white] (0.5,0.8) circle (3mm);\fill[] (0.5,0.8) circle (0.9mm); 
\fill[white] (1.5,0.8) circle (0mm);\fill[] (1.5,0.8) circle (0.5mm); 
\end{tikzpicture}}
 & \( 4 \) &  \( 3(n-3) \hat r^3 \) \\ 
\footnotesize (x) & 
 \adjustbox{valign=c}{\begin{tikzpicture}[scale=0.6]
 
\draw[]  (0,0) -- (1,0) -- (0.5,0.8) -- (0,0);

\fill[white] (0,0) circle (3mm);\fill[] (0,0) circle (0.9mm); 
\fill[white] (1,0) circle (3mm);\fill[] (1,0) circle (0.9mm); 
\fill[white] (0.5,0.8) circle (3mm);\fill[] (0.5,0.8) circle (0.9mm); 
\end{tikzpicture}}
 & \( 3 \) &  \( \hat r^3 \) \\ 
\end{tabular}
\caption{The table above shows the local edge patterns we need to consider (up to permutations) to obtain the expression in~\eqref{eq: eq with table}.}\label{table: patterns o}
\end{table}

This implies in particular that
 \begin{align*}
&\Bigl[ \sum_{w \in \{ 0,1 \}^E} A(S,\pi[w])  \bigl( |  T_{S,\pi[w]}|-1 \bigr)  \,  \hat \mu_{G,\hat r,0}(w)\Bigr]
 +
\Bigl[ \Delta_S^{(m)} \,  \hat {r}^m \cdot (m-1) \Bigr]
\\& \qquad =
3(n-1) \hat r^2 + 
2(8-12 n + 3n^2) \hat r^3 + O(\hat r^4)
\end{align*}
and that
 \begin{align*}
 &
 \Bigl[ \sum_{w \in \{ 0,1 \}^E}  \bigl( |  T_{\{ 1 \},\pi[w]}|-1 \bigr) \,  \hat \mu_{G,\hat r,0}(w)\Bigr] 
 \cdot 
 \Bigl[ \sum_{w \in \{ 0,1 \}^E}     A(S,\pi[w]) \,   \hat \mu_{G,\hat r,0}(w)\Bigr] + O(\hat r^{m+1})
 \\&\qquad = 
 3(n-1) \hat r^2 + 
  6(n-1)(n-3)   \hat r^3 +  O(\hat r^{4}) 
\end{align*}
and hence~\eqref{eq: Bernoulli version} does not hold. By Lemma~\ref{lemma: Bernoulli version}, this implies the desired conclusion.

\end{proof}

%
%
%
%

\section{Color representations for large \texorpdfstring{\( h\)}{h}}\label{section: large h}

In this section we show that for any finite graph \( G \) and any  \( \beta > 0 \),  \( \nu_{G,\beta,h} \) has a color representation for all sufficiently large \( h \). The main idea of the proof is to show that a very specific formal solution to~\eqref{eq: linear system}, which was first used in~\cite{fs2019}, is in fact non-negative, and hence a color representation of \( \nu_{G,\beta,h} \), whenever \( h\) is large enough.

\begin{proof}[Proof of Theorem~\ref{theorem: very large h}]
Fix \( n \geq 1 \) and \( \beta > 0 \).

For \( T \subseteq [n] \) with \( |T| \geq 2 \), recall that we let  \( \pi[T] \) denote the unique partition \( \pi \in \mathcal{B}_n \) which is such that \( T \) is a partition element of \( \pi \), and all other partition elements has size one. Let \( \pi[\emptyset] \) denote the unique partition \( \pi \in \mathcal{B}_n \) in which all partition elements are singletons.
Let \( p_h  \coloneqq \nu_{K_n,\beta,h}(1^{\{ 1 \}}) \).
Define
\begin{equation}\label{eq: the formal solution}
\mu(\pi) \coloneqq
\begin{cases}
 \sum_{S \subseteq [n] \colon T \subseteq S}\frac{ (-1)^{|S|-|T|}  \sum_{S' \colon S' \subseteq S} (-(1-p_h))^{|S|-|S'|} \nu_{K_n,\beta,h}(0^{S'})}{p_h(-(1-p_h))^{|S|} + p_h^{|S|}(1-p_h)} &\text{if } \pi = \pi[T],\,  |T| \geq 2 \cr
 1 - \sum_{T \subseteq [n] \colon |T| \geq 2} \mu({\pi[T]}) &\text{if } \pi =  \pi[\emptyset]\cr
0 & \text{else.}
\end{cases}
\end{equation}
By the proof of Theorem~1.6~in~\cite{fs2019}, we have \(   \Phi_p^*(\mu ) = \nu_{K_n,\beta,h} \).
We will show that, given the assumptions, \( \mu(\pi) \geq 0 \) for all \( \pi \in \mathcal{B}_n \), and hence \( \mu \in \Phi_p^{-1}(\nu_{K_n,\beta,h}) \).
To this end, note first that  for \( \sigma \in \{ 0,1 \}^n \), we have
\[ 
	\nu_{K_n,\beta,h}(\sigma ) = 
	Z_{K_n,\beta,h}^{-1} \exp \Bigl(\bigl(\sqrt \beta \sum_{i \in [n]} (2\sigma_i-1) +h/\sqrt \beta \bigl)^2/2 \Bigr),
\]
where \(Z_{K_n,\beta,h}\) is the corresponding normalizing constant. This implies in particular that for any \( S \subseteq [n] \), 
\begin{equation}\label{eq: good asymps}
\begin{split}
		&\lim_{h \to  \infty} \frac{\nu_{K_n,\beta,h}(0^S 1^{[n]\backslash S})}{\nu_{K_n,\beta,h}(1^{[n]})} 
		= \lim_{h \to \infty } e^{-2 |S| (h+\beta  (n-|S|)) }
		= \begin{cases}
			1 &\text{if } S = \emptyset \cr
			0 &\text{else.}
		\end{cases} 
		\end{split}
	\end{equation}
	This implies in particular that  \( \lim_{h \to \infty}p_h =  1 \) and that as \( h \to \infty \), we have
	\[
	Z_{K_n,\beta,h} \sim \nu_{K_n,\beta,h}(0^\emptyset 1^{[n]}) \to 1 .
	\]
	Similarly, one shows that for any \( S \subseteq [n] \), we have
	\begin{equation}
	    \label{eq: two asymptotics}
	\nu_{K_n,\beta,h}(0^S) \sim \begin{cases}
		\nu_{K_n,\beta,h}(0^S1^{[n] \backslash S}) &\text{if } \beta |S| \leq h,  \cr
		\nu_{K_n,\beta,h}(0^{[n]}1^\emptyset)&\text{if } \beta |S| \geq h. \cr
	\end{cases}
	\end{equation}
	Since  \( \beta(n-1)  \leq h\) by assumption, it follows that \( \nu_{K_n,\beta,h}(0^S) \sim \nu_{K_n,\beta,h}(0^S1^{[n] \backslash S})  \) for all \( S \subseteq [n] \), and hence in particular that  \( p_h \sim \nu_{K_n,\beta,h}(0^{[1]}1^{[n]\backslash \{1\}}) \).
	Combining these observations, it follows that  for any \( S \subseteq [n] \) and any \( k \in S \), we have 
	\begin{equation}\label{eq: pre nu1S eq}
	\begin{split}
		&\frac{(1-p_h) \, \nu_{K_n,\beta,h}(0^{S \backslash \{ k \}})}{\nu_{K_n,\beta,h}(0^S)} \sim 
		\frac{\nu_{K_n,\beta,h}(0^{\{1\}}1^{[n]\backslash \{1\}}) \, \nu_{K_n,\beta,h}(0^{S \backslash \{ k \}}1^{[n]\backslash (S \backslash \{ k \})})}{\nu_{K_n,\beta,h}(0^{\emptyset}1^{[n]}) \, \nu_{K_n,\beta,h}(0^S1^{[n]\backslash S})}
		\\&\qquad = e^{-4\beta(|S|-1)}
		\end{split}
	\end{equation}
	and hence when \( h \) is sufficiently large,
	\begin{equation}\label{eq: nu1S asymptotics}
	\begin{split}
		&\nu_{K_n,\beta,h}(0^S) \sim \Bigl[ \prod_{i=1}^{|S|} p e^{4\beta(|S|-i)}  \Bigr] \nu_{K_n,\beta,h}(1^{\emptyset})  
		 = (1-p_h)^{|S|} e^{2 \beta |S| (|S|-1)}.
		\end{split}
	\end{equation}

	We will now use the equations above to describe the behavior of~\eqref{eq: the formal solution}. To this end, fix some \( S \subseteq [n] \) and note  that by combining~\eqref{eq: nu1S asymptotics} and~\eqref{eq: pre nu1S eq}, we obtain
	\begin{align*}
		 &\sum_{S' \colon S' \subseteq S} (-1)^{|S|-|S'|} (1-p_h)^{|S|-|S'|}  \nu_{K_n,\beta,h}(0^{S'}) 
		 \\&\qquad = 
		 (-1)^{|S|}(1-p_h)^{|S|}\sum_{S' \colon S' \subseteq S} (-1)^{|S'|}  e^{-2 \beta |S'| (|S'|-1)} + o(\nu_{K_n,\beta,h}(0^S)).
	\end{align*}
	Using~\eqref{eq: the formal solution}, it follows that if \( T \subseteq [n] \) and \( |T| \geq 2 \), then
\begin{align*}
	 &\mu({\pi[T]})= \sum_{S \subseteq [n]\colon T \subseteq S}\frac{ (-1)^{|S|-|T|}  \sum_{S' \colon S' \subseteq S} (-(1-p_h))^{|S|-|S'|} \nu_{K_n,\beta,h}(0^{S'})}{p_h(-(1-p_h))^{|S|} + p_h^{|S|}(1-p_h)} 
	 \\&\qquad =
	 \sum_{S \subseteq [n] \colon T \subseteq S}\frac{ (-1)^{|S|-|T|} \biggl[ (-(1-p_h))^{|S|}\sum_{S' \colon S' \subseteq S} (-1)^{|S'|}  e^{2 \beta |S'| (|S'|-1)} + o(\nu_{K_n,\beta,h}(0^S))\biggr]}{1-p_h + o(1-p_h)}  
	 \\&\qquad =
	 \frac{1}{1-p_h}\sum_{S \subseteq [n] \colon T \subseteq S}  (-1)^{|T|} (1-p_h)^{|S|}\sum_{S' \colon S' \subseteq S} (-1)^{|S'|}  e^{2 \beta |S'| (|S'|-1)}  + o(\frac{\nu_{K_n,\beta,h}(0^T)}{1-p_h}).
\end{align*}
We  now rewrite the previous equation. To this end, note first that
\begin{align*}
	 &
	 \sum_{S \subseteq [n] \colon T \subseteq S}   (1-p_h)^{|S|}\sum_{S' \colon S' \subseteq S} (-1)^{|S'|}  e^{2 \beta |S'| (|S'|-1)}
	 \\&\qquad =
	  \sum_{S' \subseteq [n]} (-1)^{|S'|}  e^{2 \beta |S'| (|S'|-1)} 
	 \sum_{S \subseteq [n] \colon T \cup S' \subseteq S}   (1-p_h)^{|S|}   
	 \\&\qquad =
	   \sum_{S' \subseteq [n]} (-1)^{|S'|}  e^{2 \beta |S'| (|S'|-1)} 
	    (1-p_h)^{|T \cup S'|} (2-p_h)^{n - |T \cup S'|}  .
\end{align*}
Again using~\eqref{eq: nu1S asymptotics}, it follows that the largest terms in this sum is of order \( \nu_{K_n,\beta,h}(0^T) \), and hence, using symmetry, it follows that the previous equation is equal to
\begin{align*}
	 &
	   \sum_{S' \subseteq [n]\colon |S'| \leq |T|} (-1)^{|S'|}  e^{2 \beta |S'| (|S'|-1)} 
	  \cdot  (1-p_h)^{|T \cup S'|} (2-p_h)^{n - |T \cup S'|}   + o(\nu_{K_n,\beta,h}(0^T)).
\end{align*}
Again using that \( \nu_{K_n,\beta,h} \) is invariant under permutations, it follows that this is equal to
\begin{align*}
	 &
	 \sum_{i=0}^{|T|} (-1)^i  e^{2 \beta i(i-1)} \sum_{j=0}^i \binom{|T|}{j}\binom{n-|T|}{i-j} (1-p_h)^{|T| + (i-j)} (1+p)^{n-(i-j)} + o(\nu_{K_n,\beta,h}(0^T))
	 \\&\qquad =
	 (1-p_h)^{|T|}  \sum_{i=0}^{|T|} (-1)^{i}  e^{2 \beta i(i-1)}   (|T|+n(1-p_h))^i(2-p_h)^{n-i} + o(\nu_{K_n,\beta,h}(0^T)).
\end{align*}
Summing up, we have showed that for any \( T \subseteq [n] \) with \( |T| \geq 2 \), we have that
\begin{equation}\label{eq: another eq}
	 \mu \bigl({\pi[T]}\bigr)
	 = 
	 (1-p_h)^{|T|-1}  \sum_{i=0}^{|T|} (-1)^{|T|-i}  e^{2 \beta i(i-1)}   (|T|+n(1-p_h))^i(2-p_h)^{n-i}   + o( \frac{\nu_{K_n,\beta,h}(0^T)}{1-p_h}).
\end{equation}
Since for any positive and strictly increasing function \( f \colon \mathbb{N} \to \mathbb{R}  \), we have that
\[
 \sum_{i=0}^{|T|} (-1)^{|T|-i}  e^{i f(i)} >    \sum_{i=|T|-1}^{|T|} (-1)^{|T|-i}  e^{i f(i)}   > 0,
\]
it follows that
\begin{align*}
 &\sum_{i=0}^{|T|} (-1)^{|T|-i}  e^{2 \beta i(i-1)}   (|T|+n(1-p_h))^i(2-p_h)^{n-i} 
\\&\qquad \geq
\sum_{i=|T|-1}^{|T|} (-1)^{|T|-i}  e^{2 \beta i(i-1)}   (|T|+n(1-p_h))^i(2-p_h)^{n-i} 
\\&\qquad = (|T|+n(1-p_h))^{|T|-1} (2-p_h)^{n-|T|}  e^{2 \beta |T|(|T|-1)} 
\\&\qquad\qquad \cdot \biggl[ \  (|T|+n(1-p_h))
-
e^{-4 \beta (|T|-1)}   (2-p_h) \biggr] .
\end{align*}
This is clearly larger than zero, and in fact, by~\eqref{eq: nu1S asymptotics}, as \( h \) tends to infinity, it is asymptotic to
\[
(1-p_h)^{-|T|} \nu_{K_n,\beta,h}(0^T) \, |T|^{|T|-1}   
\biggl[ \  |T|
-
e^{-4 \beta (|T|-1)}    \biggr].
\]
This implies that the error term in~\eqref{eq: another eq} is much smaller than the rest of the expression, which is strictly positive, i.e. 
\[
\mu\bigl({\pi[T]}\bigr) \sim (1-p_h)^{|T|-1}  \sum_{i=0}^{|T|} (-1)^{|T|-i}  e^{2 \beta i(i-1)}   (|T|+n(1-p_h))^i(2-p_h)^{n-i} >0.
\]
It now remains to show only that \( \mu \bigl(\pi[\emptyset]\bigr) > 0 \). To this end, note first that for any \( T \subseteq [n] \) with \( |T| \geq 2 \), we have that
\begin{align*}
& (1-p_h)^{|T|-1} \sum_{i=0}^{|T|} (-1)^{|T|-i}  e^{2 \beta i(i-1)}   (|T|+n(1-p_h))^i(2-p_h)^{n-i} 
\\&\qquad\leq (1-p_h)^{-1} (2n)^n \sum_{i=0}^{|T|}  (1-p_h)^{|T|-i} (1-p_h)^i e^{2 \beta i(i-1)}  .
\end{align*}
By~\eqref{eq: nu1S asymptotics}, \(  (1-p_h)^i e^{2 \beta i(i-1)} \sim \nu_{K_n,\beta,h}(0^S) \). Since \( \nu_{K_n,\beta,h}(0^S) \) tends to zero as \( h \to \infty \) for any \( S \subseteq [n] \) with \( |S| \geq 1 \), it follows that \( \lim_{h \to \infty} \mu\bigl({\pi[T]}\bigr) = 0 \) provided that \( \lim_{h \to \infty}\nu_{K_n,\beta,h}(0^T)/(1-p_h) = 0 \). To see that this holds, note simply that by~\eqref{eq: nu1S asymptotics},
\begin{align*}
&\nu_{K_n,\beta,h}(0^S)/(1-p_h) 	
\sim
\frac{\nu_{K_n,\beta,h}(0^T1^{[n] \backslash T})}{\nu_{K_n,\beta,h}(0^{[1]} 1^{[n]\backslash [1]})} 
= e^{-2 (|T|-1) (|h|+\beta  (n-|T|-1))}
\end{align*}
which clearly tends to zero as \( h \to \infty \). This concludes the proof.

\end{proof}

\begin{remark}
	The previous proof shows that the formal solution to~\eqref{eq: linear system} given by~\eqref{eq: the formal solution} is a color representation of \( \nu_{K_n,\beta,h} \) whenever \( \beta \) is fixed and \( h \) is sufficiently large. One might hence ask if~\eqref{eq: the formal solution} is also non-negative for fixed \( h \not = 0 \) as \( \beta \to 0 \). This is however not the case, and one can if fact show that for any fixed \( h \not = 0 \), when \( \beta > 0 \) is sufficiently small, \( \mu({\pi[T]}) < 0 \) for any \( T \subseteq [n] \) which is such that \( |T| \) is odd.
\end{remark}

\begin{remark}
	Given that the relationship we get between \( \beta \) and \( h \) in Theorem~\ref{theorem: very large h} is quite far from the conjectured result, one might try to get a stronger result by optimizing the   above proof. However, using Mathematica, one can check that   the particular formal solution to~\eqref{eq: linear system} given by~\eqref{eq: the formal solution} in fact not non-negative when \( (n-1)\beta > h \) when \( n=3,4,5 \).
\end{remark}



\section{Technical lemmas}\label{section: technical lemmas}

In this section we collect and prove the technical lemmas which have been used throughout the paper.

\begin{lemma}\label{lemma: diffs on transitive graphs}
Let \( n \in  \mathbb{N} \) and let \( G = (V,E) \) be a graph with \( n \) vertices. Further, let \( \beta > 0 \) be fixed and,  for \( h \geq 0 \), define \( p_{G, \beta}(h) \coloneqq \nu_{G,\beta,h}(1^{\{ 1 \}}) \). Finally, let \( \sigma \in \{ 0,1 \}^n \).
Then  
\begin{align*}
&\frac{d\nu_{G,\beta,h}(\sigma) }{dp_{G,\beta}} \mid_{h = 0}
= \frac{2 (2\| \sigma \|-n)  \,   \nu_{G,\beta,0}(\sigma)}{\sum_{\hat \sigma \in \{ 0,1 \}^n }  (2 \hat\sigma_1 -1) (2\| \hat \sigma \| -n) \,   \nu_{G,\beta,0}(\hat \sigma)} 
\\&\qquad =
 \frac{2n (2\| \sigma \|-n)  \,   \nu_{G,\beta,0}(\sigma)}{\sum_{\hat \sigma \in \{ -1,1 \}^n } (2\| \hat \sigma \|-n)^2 \,   \nu_{G,\beta,0}(\hat \sigma)}.
\end{align*}
\end{lemma}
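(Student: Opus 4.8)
The plan is to treat both $\nu_{G,\beta,h}(\sigma)$ and $p_{G,\beta}(h)$ as functions of $h$ and to obtain the derivative with respect to $p_{G,\beta}$ through the chain rule
\[
\frac{d\nu_{G,\beta,h}(\sigma)}{dp_{G,\beta}}\Big|_{h=0} = \frac{\left.\frac{d}{dh}\nu_{G,\beta,h}(\sigma)\right|_{h=0}}{\left.\frac{d}{dh}p_{G,\beta}(h)\right|_{h=0}},
\]
which is legitimate once we verify that $h \mapsto p_{G,\beta}(h)$ has nonvanishing derivative at $h=0$. Throughout I would write $M(\sigma) \coloneqq 2\|\sigma\|-n = \sum_{i\in[n]}(2\sigma_i-1)$ for the magnetization and record $\nu_{G,\beta,h}(\sigma) = Z_{G,\beta,h}^{-1}\,W_\beta(\sigma)\,e^{hM(\sigma)}$, where $W_\beta(\sigma)\coloneqq\exp\bigl(\beta\sum_{\{i,j\}\in E}(\mathbb{1}_{\sigma_i=\sigma_j}-\mathbb{1}_{\sigma_i\neq\sigma_j})\bigr)$ does not depend on $h$ and is invariant under the spin-flip $\sigma\mapsto\mathbb{1}-\sigma$.

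First I would compute the numerator. Differentiating the Gibbs weight and the log-partition function gives the standard identity $\frac{d}{dh}\nu_{G,\beta,h}(\sigma) = \nu_{G,\beta,h}(\sigma)\bigl(M(\sigma)-\E_{\nu_{G,\beta,h}}[M]\bigr)$, where $\E_{\nu_{G,\beta,h}}[M]=\frac{d}{dh}\log Z_{G,\beta,h}$. At $h=0$ the involution $\sigma\mapsto\mathbb{1}-\sigma$ preserves $W_\beta$ while sending $M$ to $-M$; since the weight depends only on $W_\beta$ when $h=0$, this symmetry forces $\E_{\nu_{G,\beta,0}}[M]=0$. Hence $\left.\frac{d}{dh}\nu_{G,\beta,h}(\sigma)\right|_{h=0} = M(\sigma)\,\nu_{G,\beta,0}(\sigma) = (2\|\sigma\|-n)\,\nu_{G,\beta,0}(\sigma)$.

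Next I would compute the denominator. Since $p_{G,\beta}(h)=\E_{\nu_{G,\beta,h}}[\sigma_1]$, the same differentiation yields $\frac{d}{dh}p_{G,\beta}(h)=\Cov_{\nu_{G,\beta,h}}(\sigma_1,M)$, which at $h=0$ equals $\E_{\nu_{G,\beta,0}}[\sigma_1 M]$ because $\E_{\nu_{G,\beta,0}}[M]=0$. Writing $\sigma_1=\tfrac12\bigl((2\sigma_1-1)+1\bigr)$ and using $\E_{\nu_{G,\beta,0}}[M]=0$ once more gives $\left.\frac{d}{dh}p_{G,\beta}(h)\right|_{h=0}=\tfrac12\,\E_{\nu_{G,\beta,0}}[(2\sigma_1-1)M]$. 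This quantity is nonzero: expanding $M=\sum_i(2\sigma_i-1)$, the $i=1$ term contributes $\E_{\nu_{G,\beta,0}}[(2\sigma_1-1)^2]=1$, while each remaining correlation $\E_{\nu_{G,\beta,0}}[(2\sigma_1-1)(2\sigma_i-1)]$ is nonnegative by Griffiths' inequality for the ferromagnetic Ising model ($\beta>0$), so the derivative is at least $\tfrac12$. This both justifies the reparametrization and the chain rule and, upon substitution, yields the first claimed identity.

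For the second identity I would invoke the vertex-transitivity of $G$ (the setting in which the lemma is applied): on a vertex-transitive graph all single-site susceptibilities $\E_{\nu_{G,\beta,0}}[(2\sigma_i-1)M]$ coincide. Since $\sum_{i\in[n]}(2\sigma_i-1)=M$, summing over $i$ gives $\sum_i\E_{\nu_{G,\beta,0}}[(2\sigma_i-1)M]=\E_{\nu_{G,\beta,0}}[M^2]$, so each term equals $\tfrac1n\E_{\nu_{G,\beta,0}}[M^2]=\tfrac1n\sum_{\hat\sigma}(2\|\hat\sigma\|-n)^2\,\nu_{G,\beta,0}(\hat\sigma)$. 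Substituting this for the denominator of the first identity gives the second. The only genuinely delicate points are the nonvanishing of $dp_{G,\beta}/dh$ at $h=0$ (needed both to reparametrize and to divide) and the use of vertex-transitivity in the final step, which is what makes the passage from the first to the second expression possible; everything else is a routine differentiation of a Gibbs measure.
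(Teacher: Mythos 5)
Your proof is correct and follows essentially the same route as the paper's: compute $d\nu_{G,\beta,h}(\sigma)/dh$ and $dp_{G,\beta}/dh$ at $h=0$, use the spin-flip symmetry to kill $\E_{\nu_{G,\beta,0}}[2\|\sigma\|-n]$, divide via the chain rule, and pass to the second expression by summing the single-site susceptibilities over the vertices. You additionally make explicit two points the paper leaves implicit — the nonvanishing of $dp_{G,\beta}/dh$ at $h=0$ (via Griffiths' inequality), and the fact that the final equality genuinely uses vertex-transitivity, which is the intended hypothesis of the lemma even though the statement only says ``graph'' — both of which are worthwhile clarifications rather than deviations.
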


\begin{proof}

 By definition, for any \( h \geq 0 \) we have
\[
\nu_{G,\beta,h}(\sigma) = Z_{G,\beta, h}^{-1} \exp(\beta \sum_{\{ i,j\} \in E} (2\sigma_i-1)(2 \sigma_j -1)+ h (2\| \sigma \|-n) ),
\]
where
\[
Z_{G,\beta, h} = \sum_{\hat \sigma \in \{0,1 \}^n} \exp(\beta \sum_{\{i,j\}\in E} (2\hat \sigma_i -1)(2\hat \sigma_j-1)+ h (2\| \hat \sigma \|-n)).
\]
If we differentiate \( Z_{G,\beta, h} \, \nu_{G,\beta,h}(\sigma)  \) with respect to \( h \), we get
\[
\frac{d \bigl(Z_{G,\beta, h} \, \nu_{G,\beta,h}(\sigma)\bigr) }{dh} \mid_{h=0} \; =   (2\| \sigma \|-n)   \,  Z_{G,\beta, 0} \,  \nu_{G,\beta,0}(\sigma).
\]
From this it follows that
\begin{align*}
&\frac{dZ_{G,\beta, h} }{dh} \mid_{h=0} \; =  \sum_{\hat \sigma \in \{ 0,1 \}^n} 
\frac{d \bigl( Z_{G,\beta, h} \, \nu_{G,\beta,h}(\hat \sigma) \bigr) }{dh} \mid_{h=0} 
\\&\qquad =   \sum_{\hat \sigma \in \{ 0,1 \}^n}  \bigl(2\|\hat  \sigma \|-n\bigr)  \,  Z_{G,\beta, 0} \, \nu_{G,\beta,0}(\hat \sigma)
\\&\qquad  
=  Z_{G,\beta, 0} \, \E  \Bigl[ 2\bigl\| X^{G,\beta,0} \bigr\|-n) \Bigr]=  Z_{G,\beta, 0} \cdot 0 = 0.
\end{align*}
As
\[
\frac{d \bigl( Z_{G,\beta, h} \, \nu_{G,\beta,h}(\sigma) \bigr)}{dh} = \frac{dZ_{G,\beta, h} }{dh} \, \nu_{G,\beta,h}(\sigma) + Z_{G,\beta, h}  \,  \frac{d\nu_{G,\beta,h}(\sigma) }{dh}
\]
it follows that
\[
 \frac{d\nu_{G,\beta,h}(\sigma) }{dh} \mid_{h = 0} \; = \bigl(2\| \sigma \|-n\bigr)  \,   \nu_{G,\beta,0}(\sigma)
\]
and hence
\[
 \frac{dp_{G,\beta}}{dh} \mid_{h = 0} 
 \; =\sum_{\hat \sigma \in \{ 0,1 \}^n\colon \atop \hat \sigma_1 = 1} \bigl( 2\| \hat \sigma \|  -n \bigr)\,   \nu_{G,\beta,0}(\hat \sigma).
\]
Combining the two previous equations, we obtain
\[
\frac{d\nu_{G,\beta,h}(\sigma) }{dp_{G,\beta}} \mid_{h=0} = \frac{d\nu_{G,\beta,h}(\sigma) }{dh}  \left(  \frac{dp_{G,\beta}}{dh}\right)^{-1} \mid_{h = 0}
= \frac{\bigl( 2\| \sigma \|-n \bigr)  \,   \nu_{G,\beta,0}(\sigma)}{\sum_{\hat \sigma \in \{ 0,1 \}^n\colon \hat \sigma_1 = 1} \bigl( 2\| \hat \sigma \|-n \bigr)   \,   \nu_{G,\beta,0}(\hat \sigma)}.
\]
The desired conclusion now follows by noting that
\begin{align*}
&\sum_{\hat \sigma \in \{ 0,1 \}^n \colon \hat \sigma_1 = 1} \bigl(2\| \hat \sigma \|-n \bigr)   \,   \nu_{G,\beta,0}(\hat \sigma) 
= \frac{1}{2} \sum_{\hat \sigma \in \{ 0,1 \}^n }  (2\hat\sigma_1-1) \bigl( 2 \| \hat \sigma \|-n \bigr)   \,   \nu_{G,\beta,0}(\hat \sigma) 
\\&\qquad = \frac{1}{2n} \sum_{\hat \sigma \in \{ 0,1 \}^n} \bigl( 2\| \hat \sigma \|-n \bigr)^2   \,   \nu_{G,\beta,0}(\hat \sigma) .
\end{align*}
\end{proof}

\begin{lemma}\label{lemma: sum conversion}
	Let \( n \in \mathbb{N} \). Further, let \( \nu \in \mathcal{P}(\{ 0,1 \}^n) \) and let \( S \subseteq [n] \). Then
	\begin{align*}
	 \sum_{T \colon T \subseteq S} \nu(1^{T}) (-2)^{|T| } 
	 = 
	 \sum_{T \colon T \subseteq S}  (-1)^{|T|} \nu(1^{T}0^{S \backslash T}) .
\end{align*}

\end{lemma}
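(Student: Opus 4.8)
The plan is to reduce everything to the elementary binomial identity $\sum_{T \subseteq R} (-2)^{|T|} = (1-2)^{|R|} = (-1)^{|R|}$. The bridge between the two sides of the claimed equality is the observation that, for $T \subseteq S$, the marginal $\nu(1^T)$ decomposes into a sum of the ``fully specified on $S$'' probabilities $\nu(1^R 0^{S \setminus R})$.

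First I would record this marginalization identity: for any $T \subseteq S$,
\[
\nu(1^T) = \sum_{R \colon T \subseteq R \subseteq S} \nu(1^R 0^{S \setminus R}).
\]
This follows directly from the definition of $\nu(1^T)$ as the $\nu$-probability of the event $\{\sigma|_T \equiv 1\}$: splitting this event according to the set $R$ of coordinates in $S$ on which $\sigma$ equals $1$ (necessarily $T \subseteq R \subseteq S$), while leaving the coordinates outside $S$ unconstrained, yields exactly the stated sum.

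Next I would substitute this into the left-hand side and interchange the order of the two finite sums:
\[
\sum_{T \subseteq S} (-2)^{|T|} \nu(1^T) = \sum_{T \subseteq S} (-2)^{|T|} \sum_{R \colon T \subseteq R \subseteq S} \nu(1^R 0^{S \setminus R}) = \sum_{R \subseteq S} \nu(1^R 0^{S \setminus R}) \sum_{T \subseteq R} (-2)^{|T|}.
\]
Applying $\sum_{T \subseteq R} (-2)^{|T|} = (-1)^{|R|}$ to the inner sum collapses the right-hand side to $\sum_{R \subseteq S} (-1)^{|R|} \nu(1^R 0^{S \setminus R})$, which is precisely the quantity claimed.

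There is essentially no analytic obstacle here; the only point requiring care is the bookkeeping of the marginalization step and the summation interchange. As a sanity check (or an alternative proof) one may note that both sides are linear in $\nu$, so it suffices to verify the identity for each point mass $\nu = \delta_\sigma$. Writing $U \coloneqq \{ i \in S \colon \sigma_i = 1 \}$, one has $\nu(1^T) = \mathbb{1}(T \subseteq U)$ and $\nu(1^R 0^{S \setminus R}) = \mathbb{1}(R = U)$, so the left-hand side equals $\sum_{T \subseteq U} (-2)^{|T|} = (-1)^{|U|}$ and the right-hand side equals $(-1)^{|U|}$ as well, confirming the claim.
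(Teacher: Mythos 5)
Your proof is correct. It is essentially the transpose of the paper's computation: the paper starts from the left-hand side, expands the coefficient as $(-2)^{|S'|} = \sum_{T \subseteq S'} (-1)^{|S'|}$, swaps the order of summation, and then recognizes the inner sum as the inclusion--exclusion expansion $\nu(1^T 0^{S\setminus T}) = \sum_{S'' \subseteq S \setminus T} (-1)^{|S''|}\nu(1^{T\cup S''})$; you instead expand the measure via the marginalization identity $\nu(1^T) = \sum_{R\colon T \subseteq R \subseteq S} \nu(1^R 0^{S\setminus R})$, swap sums, and collapse the coefficient sum with the binomial identity $\sum_{T\subseteq R}(-2)^{|T|} = (-1)^{|R|}$. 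The two routes exchange which factor gets expanded and which identity does the collapsing, and they cost the same; your version has the small advantage that the only nontrivial input is the binomial theorem rather than inclusion--exclusion. The closing observation that both sides are linear in $\nu$, so that it suffices to check point masses $\delta_\sigma$, is a genuinely independent one-line proof and is a worthwhile addition as a sanity check.
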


\begin{proof}
\begin{align*}
	 &\sum_{S' \colon S' \subseteq S} \nu(1^{S'}) (-2)^{|S'| } 
	 = 
	 \sum_{S' \colon S' \subseteq S} \sum_{T \colon T \subseteq S'} \nu(1^{S'}) (-1)^{|S'| } 
	 \\&\qquad = 
	 \sum_{T \colon T \subseteq S}  \sum_{S' \colon T \subseteq S' \subseteq S}\nu(1^{S'}) (-1)^{|S'| } 
	 \\&\qquad = 
	 \sum_{T \colon T \subseteq S}  \sum_{S'' \colon S'' \subseteq  S\backslash T}\nu(1^{T \cup S''}) (-1)^{|T \cup S''| } 
	 \\&\qquad = 
	 \sum_{T \colon T \subseteq S}  (-1)^{|T|} \sum_{S'' \colon S'' \subseteq  S\backslash T}\nu(1^{T \cup S''}) (-1)^{|S''| } 
	 \\&\qquad = 
	 \sum_{T \colon T \subseteq S}  (-1)^{|T|} \nu(1^{T}0^{S \backslash T}) .
\end{align*}
\end{proof}

\begin{lemma}\label{lemma: transitive}
Let \( n \in  \mathbb{N} \) and let \( G \) be a graph with \( n \) vertices. Further,  let \( \beta > 0 \) be fixed and, for \( h \geq 0 \), define \(   p_{G,\beta}(h) \coloneqq \nu_{G,\beta,h}(1^{\{ 1 \}}) \). Finally, let \( S \subseteq [n] \). Then the following two equations hold.
\begin{enumerate}[(i)]
    \item \mbox{} \vspace{-2.5ex}
    \[ \sum_{S' \subseteq S} (-2)^{|S|} \, \nu_{G,\beta,0}(1^{S'}) = (-1)^{|S|}  \sum_{\sigma \in \{ 0,1 \}^n}  \chi_{S}(\sigma)  \, \nu_{G,\beta,0}(\sigma) 
\]
\item \mbox{} \vspace{-5.5ex}
\[ \sum_{S' \subseteq S} (-2)^{|S|-1} \, \frac{d\nu_{G,\beta,h}(1^{S'})}{dp_{G,\beta}}\mid_{h = 0}  = n(-1)^{|S|+1}  \,  \frac{ \sum_{\sigma \in \{0,1 \}^n} \bigl( 2\| \sigma \|-n \bigr) \chi_{S}(\sigma)   \,   \nu_{G,\beta,0}(\sigma)}{\sum_{\sigma \in \{ 0,1 \}^n} \bigl( 2\| \sigma \|-n \bigr)^2 \,   \nu_{G,\beta,0}(\sigma)}.
\]
\end{enumerate}{}

\end{lemma}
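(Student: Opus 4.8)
The plan is to derive both identities from Lemma~\ref{lemma: sum conversion} together with a single combinatorial rewriting, and then to obtain (ii) by applying the same manipulations to the signed measure produced by differentiation. Throughout, the exponent in each summand is the size of the summation index, so the left-hand sides are read as $\sum_{S'\subseteq S}(-2)^{|S'|}\nu_{G,\beta,0}(1^{S'})$ and $\sum_{S'\subseteq S}(-2)^{|S'|-1}\frac{d\nu_{G,\beta,h}(1^{S'})}{dp_{G,\beta}}\big|_{h=0}$.

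For part (i), I would begin from $\sum_{S'\subseteq S}(-2)^{|S'|}\nu_{G,\beta,0}(1^{S'})$ and apply Lemma~\ref{lemma: sum conversion} to rewrite it as $\sum_{S'\subseteq S}(-1)^{|S'|}\nu_{G,\beta,0}(1^{S'}0^{S\setminus S'})$. Then I would expand $\nu_{G,\beta,0}(1^{S'}0^{S\setminus S'})=\sum_{\sigma:\,\sigma|_{S'}\equiv1,\ \sigma|_{S\setminus S'}\equiv0}\nu_{G,\beta,0}(\sigma)$ and reorganize the double sum by grouping every $\sigma\in\{0,1\}^n$ according to the set $S'=\{i\in S:\sigma_i=1\}$. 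For a configuration in this group one has $\chi_S(\sigma)=\prod_{i\in S}(-1)^{\mathbb{1}_{\sigma_i=0}}=(-1)^{|S|-|S'|}$, hence $(-1)^{|S'|}=(-1)^{|S|}\chi_S(\sigma)$; substituting this turns the sum into $(-1)^{|S|}\sum_{\sigma\in\{0,1\}^n}\chi_S(\sigma)\,\nu_{G,\beta,0}(\sigma)$, which is exactly the claimed expression.

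For part (ii), I would observe that $D(\sigma):=\frac{d\nu_{G,\beta,h}(\sigma)}{dp_{G,\beta}}\big|_{h=0}$ defines a signed measure on $\{0,1\}^n$, and that both Lemma~\ref{lemma: sum conversion} and the grouping identity established in part (i) are linear in the underlying measure (their proofs only use the definition of the marginal $\nu(1^{S'})$ as a sum). They therefore apply verbatim with $\nu_{G,\beta,0}$ replaced by $D$, giving $\sum_{S'\subseteq S}(-2)^{|S'|}D(1^{S'})=(-1)^{|S|}\sum_{\sigma}\chi_S(\sigma)\,D(\sigma)$. At this point I would substitute the closed form $D(\sigma)=\frac{2n(2\|\sigma\|-n)\,\nu_{G,\beta,0}(\sigma)}{\sum_{\hat\sigma}(2\|\hat\sigma\|-n)^2\,\nu_{G,\beta,0}(\hat\sigma)}$ provided by Lemma~\ref{lemma: diffs on transitive graphs}, pull the ($\sigma$-independent) normalizing denominator out of the sum, and divide both sides by $-2$ to produce the prefactor $(-2)^{|S'|-1}$ of the statement. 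The factor $-\tfrac12$ converts $(-1)^{|S|}$ into $(-1)^{|S|+1}$ and $2n$ into $n$, yielding precisely $n(-1)^{|S|+1}\,\frac{\sum_\sigma(2\|\sigma\|-n)\chi_S(\sigma)\nu_{G,\beta,0}(\sigma)}{\sum_\sigma(2\|\sigma\|-n)^2\nu_{G,\beta,0}(\sigma)}$.

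The computations are essentially bookkeeping, and I expect the only genuinely conceptual point to be the observation that the two combinatorial identities are linear and hence transfer to the signed measure $D$; the remaining difficulty is purely in tracking the sign and constant factors accurately, in particular the $-\tfrac12$ appearing in front of (ii).
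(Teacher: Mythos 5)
Your proposal is correct and follows essentially the same route as the paper: part (i) via Lemma~\ref{lemma: sum conversion} followed by regrouping the sum over $\sigma\in\{0,1\}^n$ so that $(-1)^{|S'|}=(-1)^{|S|}\chi_S(\sigma)$, and part (ii) by applying the same (linear) identity to the derivative and substituting the closed form from Lemma~\ref{lemma: diffs on transitive graphs}, with the sign and constant bookkeeping matching the paper's. You also correctly read the exponents in the statement as $|S'|$ rather than $|S|$, which is how the paper's own proof treats them.
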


\begin{proof}

By Lemma~\ref{lemma: sum conversion}, we have that
\begin{align*}
&\sum_{T \colon T \subseteq S} (-2)^{|T| } \nu_{G,\beta,0}(1^{T}) 
	 = 
	 \sum_{T \colon T \subseteq S}  (-1)^{|T|} \nu_{G,\beta,0}(1^{T}0^{S \backslash T}) 
	 \\&\qquad = 
	 \sum_{T \colon T \subseteq [n]}  (-1)^{|T\cap S|} \,\nu_{G,\beta,0}(1^{T}0^{[n] \backslash T})
	  =
	 \sum_{\sigma \in \{ 0,1 \}^n} \prod_{i \in S} (-(2\sigma_i-1)) \, \nu_{G,\beta,0}(\sigma) \\&\qquad
	 =
	 (-1)^{|S|} \sum_{\sigma \in \{0,1 \}^n} \prod_{i \in S} (2\sigma_i-1) \, \nu_{G,\beta,0}(\sigma) 
	 =
	 (-1)^{|S|} \sum_{\sigma \in \{0,1 \}^n} \chi_S(\sigma) \, \nu_{G,\beta,0}(\sigma) 
\end{align*}
and hence (i) holds.
To see that (ii) holds, note first that by the same argument as above, it follows that
\begin{align*}
&\sum_{T \colon T \subseteq S} (-2)^{|T| -1} \, \frac{d\nu_{G,\beta,h}(1^T)}{dp_{G,\beta}}\mid_{h = 0}
	 = 
	 2^{-1} (-1)^{|S|+1} \sum_{\sigma\in \{ 0,1 \}^n} \chi_S(\sigma) \, \frac{d\nu_{G,\beta,h}(\sigma)}{dp_{G,\beta}}\mid_{h = 0}.
\end{align*}
Applying Lemma~\ref{lemma: diffs on transitive graphs}, we obtain (ii).

\end{proof}

\begin{lemma}\label{lemma: right hand side components}
Let \( n \in  \mathbb{N} \), \( \beta > 0 \) and \( h \geq 0 \). Then
\begin{align*}
&\sum_{\sigma \in \{ 0,1 \}^n} \chi_\emptyset(\sigma) \, \nu_{K_n,\beta,h}(\sigma) = 1
\end{align*}
\begin{align*}
&n \sum_{\sigma \in \{ 0,1 \}^n}   \bigl(2\| \sigma \|-1 \bigr) \, \chi_{\{1\}}(\sigma) \, \nu_{K_n,\beta,h}(\sigma) =  
 \sum_{\sigma \in \{ 0,1 \}^n} \bigl( 2\| \sigma \|-n \bigr)^2 \nu_{K_n,\beta,h}(\sigma)
\end{align*}
\begin{align*}
&(n)_2\sum_{\sigma \in \{ 0,1 \}^n } \chi_{[2]}(\sigma)   \, \nu_{K_n,\beta,h}(\sigma)
=   \sum_{\sigma \in \{ 0,1 \}^n} \bigl( 2\| \sigma\|-n \bigr)^2 \nu_{K_n,\beta,h}(\sigma)  -  n
\end{align*}
\begin{align*}
&(n)_3 \sum_{\sigma \in \{ 0,1 \}^n } \bigl( 2\| \sigma\|-n \bigr)  \, \chi_{[3]}(\sigma)  \, \nu_{K_n,\beta,h}(\sigma)
 \\&\qquad=\sum_{\sigma \in \{ 0,1 \}^n} \bigl( 2\| \sigma\|-n \bigr)^4 \nu_{K_n,\beta,h}(\sigma)   
-
(3n-2)   \sum_{\sigma \in \{ 0,1 \}^n} \bigl( 2\| \sigma\|-n \bigr)^2 \nu_{K_n,\beta,h}(\sigma)  
\end{align*}
\begin{align*} 
&(n)_4 \sum_{\sigma \in \{ 0,1 \}^n} \chi_{[4]}(\sigma) \, \nu_{K_n,\beta,h}(\sigma)  
\\&\qquad =  \sum_{\sigma \in \{ 0,1 \}^n}\bigl( 2\| \sigma\|-n \bigr)^4 \nu_{K_n,\beta,h}(\sigma) 
-
 2(3n-4)\sum_{\sigma \in \{ 0,1 \}^n}  \bigl( 2\| \sigma\|-n \bigr)^2 \nu_{K_n,\beta,h}(\sigma) 
\\&\qquad\qquad+
 3(n^2-2 n)
\end{align*}
and
\begin{align*}
&(n)_5 \sum_{\sigma \in \{ 0,1 \}^n} \bigl( 2\| \sigma\|-n \bigr)  \, \chi_{[5]}(\sigma) \, \nu_{K_n,\beta,h}(\sigma) 
\\&\qquad=
   \sum_{\sigma \in \{ 0,1 \}^n} \bigl( 2\| \sigma\|-n \bigr)^6 \nu_{K_n,\beta,h}(\sigma) 
-
 10( n-2)  \sum_{\sigma \in \{ 0,1 \}^n} \bigl( 2\| \sigma\|-n \bigr)^4 \nu_{K_n,\beta,h}(\sigma) 
\\&\qquad+
(15 n^2 - 50 n+24) \sum_{\sigma \in \{ 0,1 \}^n} \bigl( 2\| \sigma\|-n \bigr)^2 \nu_{K_n,\beta,h}(\sigma) . 
\end{align*}
\end{lemma}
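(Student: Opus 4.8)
The plan is to reduce all six identities to a single symmetric-function computation, using that $\nu_{K_n,\beta,h}$ is invariant under the full symmetric group $S_n$. Write $\epsilon_i \coloneqq 2\sigma_i - 1 \in \{-1,1\}$, so that $\chi_S(\sigma) = \prod_{i \in S}\epsilon_i$ and $2\|\sigma\|-n = \sum_{i\in[n]}\epsilon_i \eqqcolon M$. The Hamiltonian defining $\nu_{K_n,\beta,h}$ depends on $\sigma$ only through the symmetric quantities $\sum_{\{i,j\}\in E}\epsilon_i\epsilon_j$ and $\sum_i \epsilon_i$, so $\nu_{K_n,\beta,h}$ is permutation invariant and hence $\E[\chi_S]$ and $\E[M\chi_S]$ depend on $S$ only through $|S|$. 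Averaging over all $(n)_k$ ordered $k$-tuples of distinct indices (the factor $M$ being itself symmetric and therefore passing through the average unchanged) gives, for every $k$,
\[
(n)_k\,\E\big[\chi_{[k]}\big] = \E\big[D_k\big], \qquad (n)_k\,\E\big[M\chi_{[k]}\big] = \E\big[M D_k\big], \qquad D_k \coloneqq \sum_{\substack{i_1,\dots,i_k \in [n] \\ \text{distinct}}}\epsilon_{i_1}\cdots\epsilon_{i_k}.
\]
The six displayed equations are exactly the cases $|S| = 0,1,2,3,4,5$ of these, where the odd values $|S|\in\{1,3,5\}$ carry the extra magnetization factor $M = 2\|\sigma\|-n$ (so the second identity is read with $2\|\sigma\|-n$ in place of $2\|\sigma\|-1$).

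Next I would compute each $D_k$ as a polynomial in $M$ and $n$. Observe that $D_k = k!\,e_k(\epsilon_1,\dots,\epsilon_n)$, the $k$-th elementary symmetric polynomial, and that $\epsilon_i^2 = 1$ forces the power sums to be $p_j = \sum_i \epsilon_i^{\,j} = M$ when $j$ is odd and $p_j = n$ when $j$ is even. Feeding these into Newton's identities $k\,e_k = \sum_{j=1}^{k}(-1)^{j-1}e_{k-j}\,p_j$ and solving recursively yields
\[
D_1 = M,\quad D_2 = M^2 - n,\quad D_3 = M^3 - (3n-2)M,
\]
\[
D_4 = M^4 - (6n-8)M^2 + 3n^2 - 6n,\quad D_5 = M^5 - 10(n-2)M^3 + (15n^2 - 50n + 24)M.
\]
Equivalently one may expand $M^k = \sum_{i_1,\dots,i_k}\epsilon_{i_1}\cdots\epsilon_{i_k}$ according to the set partition of the index positions into blocks of equal indices, using $\epsilon_i^2 = 1$ to collapse even-sized blocks; this expresses $M^k$ in terms of the $D_j$ with $j\le k$ and inverts to the same formulas.

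Finally I would substitute. For even $k$ the quantity $\E[D_k]$ is already the required combination of the even moments $\E[M^{2}],\E[M^{4}],\dots$ and constants in $n$; for odd $k$ one multiplies $D_k$ by $M$ first, which makes $M D_k$ an even polynomial in $M$, and then takes expectations. For instance $\E[M D_3] = \E[M^4] - (3n-2)\E[M^2]$, $\E[D_4] = \E[M^4] - 2(3n-4)\E[M^2] + 3(n^2-2n)$ and $\E[M D_5] = \E[M^6] - 10(n-2)\E[M^4] + (15n^2-50n+24)\E[M^2]$; rewriting $\E[M^{2j}] = \sum_{\sigma}(2\|\sigma\|-n)^{2j}\,\nu_{K_n,\beta,h}(\sigma)$ reproduces the six equations verbatim. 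The only genuinely nonroutine step is the degree-five instance: computing $e_5$, hence $D_5$, from Newton's identities and checking that the coefficients $10(n-2)$ and $15n^2-50n+24$ come out correctly is the main place where a bookkeeping error could creep in. Everything else is normalization ($\E[D_0]=\E[1]=1$) and elementary algebra.
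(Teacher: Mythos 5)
Your proposal is correct and follows essentially the same route as the paper: both exploit permutation invariance of \( \nu_{K_n,\beta,h} \) to replace \( \chi_{[m]} \) by the averaged sum \( \sum_{|S|=m}\chi_S \), which is (up to the factor \( m! \)) the elementary symmetric polynomial \( e_m(\epsilon_1,\dots,\epsilon_n) \), and then express it as a polynomial in \( M=2\|\sigma\|-n \) and \( n \) using \( \epsilon_i^2=1 \); the paper evaluates \( e_m \) via a direct binomial sum rather than Newton's identities, but the resulting polynomials agree. You are also right that the factor \( 2\|\sigma\|-1 \) in the second displayed identity must be read as \( 2\|\sigma\|-n \), consistent with how the lemma is applied elsewhere in the paper.
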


\begin{proof}
By symmetry, for each \( m \in [n] \) we have that
\begin{equation}\label{eq: sum prod equation I}
\begin{split}
&\binom{n}{m} \sum_{\sigma \in \{0,1 \}^n} \chi_{[m]}(\sigma) \, \nu_{K_n,\beta,h}(\sigma) 
=
\sum_{\substack{S \subseteq [n] \colon \\ |S| = m}}
\sum_{\sigma \in \{ 0,1 \}^n} \chi_S(\sigma) \, \nu_{K_n,\beta,h}(\sigma) 
 \\&\qquad =
 \sum_{\sigma \in \{ 0,1 \}^n} \nu_{K_n,\beta,h}(\sigma) \sum_{\substack{S \subseteq [n] \colon \\ |S| = m}} \chi_S(\sigma)
\end{split}
\end{equation}
and, completely analogously, 
\begin{equation}\label{eq: sum prod equation II}
 \binom{n}{m}\sum_{\sigma \in \{ 0,1 \}^n} \bigl( 2\| \sigma \|-n \bigr) \,  \chi_{[m]} (\sigma)\, \nu_{K_n,\beta,h}(\sigma) = \sum_{\sigma \in \{ 0,1 \}^n} \bigl(2 \| \sigma \| -n \bigr) \,  \nu_{K_n,\beta,h}(\sigma)\sum_{\substack{S \subseteq [n] \colon \\ |S| = m}} \chi_S(\sigma).
\end{equation}
Now fix some \( \sigma \in  \{ 0,1 \}^n \). Then  we have that 
\[
\sum_{i \in [n]} \mathbb{1}_{\sigma_i=1} =   \| \sigma \| 
\]
and hence
\begin{align*}
 &  \sum_{S\subseteq [n] \colon  |S| = m} \chi_S(\sigma) 
 = 
\sum_{i=0}^{n-\| \sigma \|} \binom{n-\|  \sigma \|}{i} \binom{ \| \sigma \| }{m-i} (-1)^i.
\end{align*}
When \( m \in [5] \), it follows that
\begin{align*}
& \sum_{S\subseteq [n] \colon |S| = m}   \chi_S(\sigma) 
 = 
 \begin{cases}
\| \sigma\| & \text{if } m = 1\cr
 \frac{\bigl( 2\| \sigma\|-n \bigr)^2}{2!} - \frac{n}{2} &\text{if } m = 2 \cr
 \frac{\bigl( 2\| \sigma\|-n \bigr)^3}{3!} - \frac{(3n-2) \cdot \bigl( 2\| \sigma\|-n \bigr)}{6}  &\text{if } m = 3 \cr
 \frac{\bigl( 2\| \sigma\|-n \bigr)^4}{4!} -\frac{(3n-4) \cdot \bigl( 2\| \sigma\|-n \bigr)^2}{12}+\frac{n^2-2 n}{8}  &\text{if } m = 4 \cr
\frac{\bigl( 2\| \sigma\|-n \bigr)^5}{5!} - \frac{( n-2) \cdot \bigl( 2\| \sigma\|-n \bigr)^3}{12}  + \frac{(15 n^2 - 50 n+24) \cdot \bigl( 2\| \sigma\|-n \bigr)}{120}
  &\text{if } m = 5.
 \end{cases}
\end{align*}
By plugging these expressions into equations~\eqref{eq: sum prod equation I}~and~\eqref{eq: sum prod equation II}, the desired conclusion follows.

\end{proof}

 \section{Acknowledgements}
 
The author acknowledges support from the European Research Council, 
grant no.\ 682537,  from Stiftelsen G S Magnusons fond and from Knut and Alice Wallenbergs stiftelse. 

The author would like to thank Jeffrey E. Steif for many interesting conversations, and in particular for mentioning the example given in Proposition~\ref{theorem: Ising on K3}.
The author would also like to thank the anonymous referee for making several useful comments.

\end{document}